\theoremstyle{plain}
\newtheorem{thm}{Theorem}[section]
\newtheorem{lem}[thm]{Lemma}
\newtheorem{prop}[thm]{Proposition}
\def\@rst #1 #2other{#1}
\newcommand\MR[1]{\relax\ifhmode\unskip\spacefactor3000 \space\fi
  \MRhref{\expandafter\@rst #1 other}{#1}}
\newcommand{\MRhref}[2]{\href{http://www.ams.org/mathscinet-getitem?mr=#1}{MR#2}}
\theoremstyle{definition}
\newtheorem{defn}[thm]{Definition}
\newtheorem{remark}[thm]{Remark}
\numberwithin{equation}{section}
\newcommand{\dsb}{\begin{adjustwidth}{2.5em}{0pt}
\begin{footnotesize}}
\newcommand{\dse}{\end{footnotesize}
\end{adjustwidth}}
\newcommand{\ssb}{\begin{adjustwidth}{2.5em}{0pt}}
\newcommand{\sse}{\end{adjustwidth}}
\newcommand{\aryb}{\begin{eqnarray*}}
\newcommand{\arye}{\end{eqnarray*}}
\def\alb#1\ale{\begin{align*}#1\end{align*}}
\def\allb#1\alle{\begin{align}#1\end{align}}
\newcommand{\eqb}{\begin{equation}}
\newcommand{\eqe}{\end{equation}}
\newcommand{\eqbn}{\begin{equation*}}
\newcommand{\eqen}{\end{equation*}}
\newcommand{\BB}{\mathbbm}
\newcommand{\ol}{\overline}
\newcommand{\op}{\operatorname}
\newcommand{\frk}{\mathfrak}
\newcommand{\eqD}{\overset{d}{=}}
\newcommand{\ep}{\varepsilon}
\newcommand{\rta}{\rightarrow}
\newcommand{\wt}{\widetilde}
\newcommand{\mcl}{\mathcal}
\newcommand{\bdy}{\partial}
\newcommand{\rng}{\mathring}
\newcommand{\crit}{{\mathrm{c}}}
\newcommand{\ccM}{{\mathbf{c}_{\mathrm M}}}
\let\originalleft\left
\let\originalright\right
\renewcommand{\left}{\mathopen{}\mathclose\bgroup\originalleft}
\renewcommand{\right}{\aftergroup\egroup\originalright}
\title{The critical Liouville quantum gravity metric induces the Euclidean topology}
 \date{ }
 \author{
\begin{tabular}{c} Jian Ding\\[-5pt]\small University of Pennsylvania \end{tabular}
\begin{tabular}{c} Ewain Gwynne\\[-5pt]\small University of Chicago \end{tabular} 
}
\begin{document}

\maketitle

\begin{abstract}
We show that every possible metric associated with critical ($\gamma=2$) Liouville quantum gravity (LQG) induces the same topology on the plane as the Euclidean metric. More precisely, we show that the optimal modulus of continuity of the critical LQG metric with respect to the Euclidean metric is a power of $1/\log(1/|\cdot|)$. Our result applies to every possible subsequential limit of critical Liouville first passage percolation, a natural approximation scheme for the LQG metric which was recently shown to be tight.  
\end{abstract}

\tableofcontents

\section{Introduction}
\label{sec-intro}

\subsection{Overview}
\label{sec-overview}

The \emph{Gaussian free field} (GFF) is the most natural random generalized function on a planar domain. 
There are many different versions of the GFF, corresponding, e.g., to different choices of domain and boundary conditions.
For concreteness, in this paper we will focus on the whole-plane GFF normalized so that its average over the unit circle is zero.
This is the centered Gaussian process $h$ on $\BB C$ with covariance function\footnote{See, e.g.,~\cite[Section 2.1.1]{vargas-dozz-notes} for a computation of this covariance function.}
\eqbn
\op{Cov}(h(z) , h(w)) = G(z,w) := \log \frac{\max\{|z|,1\}  \max\{|w|,1\}}{|z-w|} ,\quad\forall z,w\in \BB C  ,
\eqen 
interpreted as a random generalized function on $\BB C$. We refer to~\cite{shef-gff,pw-gff-notes} for more background on the GFF.

Liouville quantum gravity (LQG) is a class of models of random geometry defined using the exponential of the GFF, $e^h$. 
The exponential of the GFF does not make literal sense since $h$ is a generalized function, not a true function. 
However, one can rigorously define various objects associated with LQG by approximating $h$ by a family of continuous functions, then taking appropriate limits. 
In this paper, we will be primarily interested in the LQG metric (distance function), whose definition we review just below. 
We refer to~\cite{berestycki-lqg-notes,gwynne-ams-survey} for introductory expository articles on LQG focusing on aspects relevant to the present paper.

\subsubsection{Liouville first passage percolation}

To construct the LQG metric, let us first introduce a family of continuous functions which approximate $h$. 
For $s > 0$ and $z,w\in\BB C$, let $p_s(z) = \frac{1}{2\pi s} \exp\left( - \frac{|z|^2}{2s} \right)$ be the heat kernel. For $\ep >0$, we define a mollified version of the GFF by
\eqb \label{eqn-gff-convolve}
h_\ep^*(z) := (h*p_{\ep^2/2})(z) = \int_{\BB C} h(w) p_{\ep^2/2} (z-w) \, dw ,\quad \forall z\in \BB C  ,
\eqe
where the integral is interpreted in the sense of distributional pairing. We use $p_{\ep^2/2}$ instead of $p_\ep$ so that the variance of $h_\ep^*(z)$ is $\log\ep^{-1} + O_\ep(1)$. 
 
We now consider a parameter $\xi > 0$. \emph{Liouville first passage percolation} (LFPP) with parameter $\xi$ is the family of random metrics $\{D_h^\ep\}_{\ep > 0}$ defined by
\eqb \label{eqn-lfpp}
D_h^\ep(z,w) := \inf_{P : z\rta w} \int_0^1 e^{\xi h_\ep^*(P(t))} |P'(t)| \,dt ,\quad \forall z,w\in\BB C 
\eqe
where the infimum is over all piecewise continuously differentiable paths $P:[0,1]\rta\BB C$ from $z$ to $w$.  
To extract a non-trivial limit of the metrics $D_h^\ep$, we need to re-normalize. We (somewhat arbitrarily) define our normalizing factor by
\eqb \label{eqn-gff-constant}
\frk a_\ep := \text{median of} \: \inf\left\{ \int_0^1 e^{\xi h_\ep^*(P(t))} |P'(t)| \,dt  : \text{$P$ is a left-right crossing of $[0,1]^2$} \right\} ,
\eqe  
where a left-right crossing of $[0,1]^2$ is a piecewise continuously differentiable path $P : [0,1]\rta [0,1]^2$ joining the left and right boundaries of $[0,1]^2$.
 
It was shown in~\cite[Proposition 1.1]{dg-supercritical-lfpp} that for each $\xi > 0$, there exists $Q = Q(\xi) > 0$ such that 
\eqb  \label{eqn-Q-def}
\frk a_\ep = \ep^{1 - \xi Q  + o_\ep(1)} ,\quad \text{as} \quad \ep \rta 0 . 
\eqe 
The existence of $Q$ is proven via a subadditivity argument, so the exact relationship between $Q$ and $\xi$ is not known. However, it is known that $Q \in (0,\infty)$ for all $\xi  > 0$ and $Q$ is a non-increasing function of $\xi$~\cite{dg-supercritical-lfpp,lfpp-pos}. See also~\cite{gp-lfpp-bounds,ang-discrete-lfpp} for bounds for $Q$ in terms of $\xi$. 

We define the \emph{critical value} for the parameter $\xi$ by
\eqb
\xi_\crit := \inf\{\xi > 0 : Q(\xi) = 2\} .
\eqe
It follows from~\cite[Proposition 1.1]{dg-supercritical-lfpp} that $\xi_\crit$ is the unique value of $\xi$ for which $Q(\xi) = 2$ and from~\cite[Theorem 2.3]{gp-lfpp-bounds} that $\xi_\crit  \in [0.4135 , 0.4189]$. We have $Q > 2$ for $\xi < \xi_\crit$ and $Q \in(0,2)$ for $\xi > \xi_\crit$. 
 
\begin{defn}
We refer to LFPP with $\xi <\xi_\crit$, $\xi = \xi_\crit$, and $\xi > \xi_\crit$ as the \emph{subcritical}, \emph{critical}, and \emph{supercritical} phases, respectively.
\end{defn}

\begin{figure}[ht!]
\begin{center}
\includegraphics[width=\textwidth]{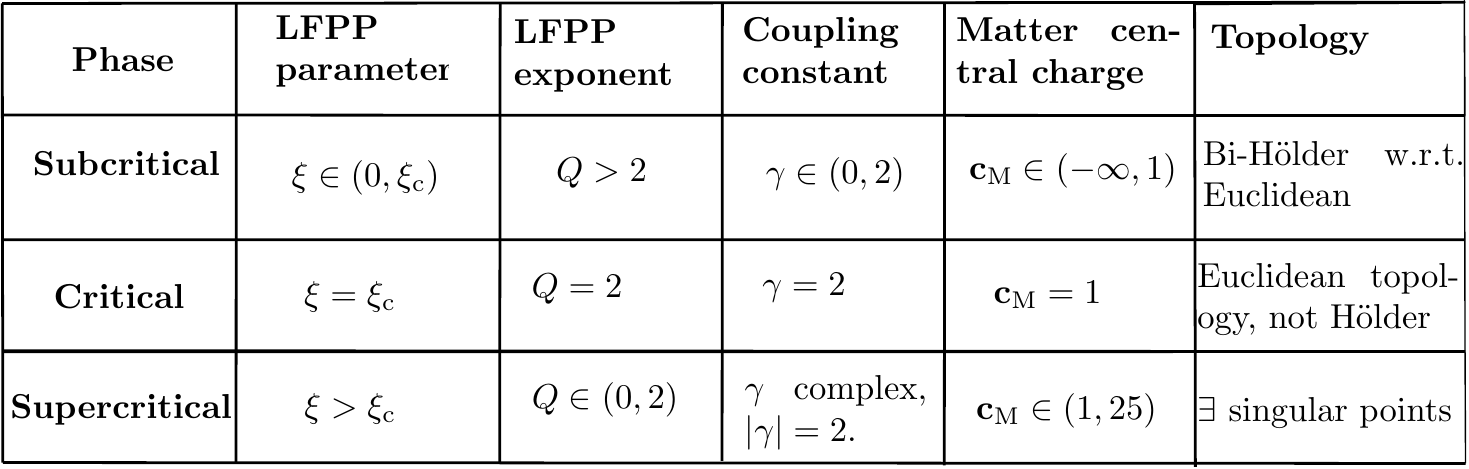}
\caption{\label{fig-c-phases-table} Table summarizing the phases for the LQG metric. The fact that the metric induces the Euclidean topology in the critical case is the main result of this paper. 
}
\end{center}
\end{figure}

\subsubsection{Subcritical and supercritical phases}

We will primarily be interested in the critical phase, but by way of context we will now discuss what happens in the subcritical and supercritical phases. See Figure~\ref{fig-c-phases-table} for a table which summarizes the three phases. 

In the subcritical phase, it was shown by Ding, Dub\'edat, Dunlap, and Falconet~\cite{dddf-lfpp} that the re-scaled LFPP metrics $\frk a_\ep^{-1} D_h^\ep$ are tight with respect to the topology of uniform convergence on compact subsets of $\BB C\times \BB C$. Every possible subsequential limit $D_h$ is a metric which induces the same topology on $\BB C$ as the Euclidean metric. 
Subsequently, it was shown by Gwynne and Miller~\cite{gm-uniqueness} (building on~\cite{local-metrics,lqg-metric-estimates,gm-confluence}) that the subsequential limit is unique. 

The limiting metric $D_h$ is the metric associated with LQG with coupling constant $\gamma \in (0,2)$, where $\gamma$ is related to $\xi$ by the non-explicit formulas
\eqb \label{eqn-gamma-xi}
Q(\xi) = \frac{2}{\gamma} + \frac{\gamma}{2} \quad \text{or equivalently} \quad \gamma = \xi d(\xi) .
\eqe
Here, $d(\xi)$ is the Hausdorff dimension of the metric space $(\BB C , D_h)$. Equivalently, $D_h$ is the metric associated to LQG with \emph{matter central charge} 
\eqb \label{eqn-c-xi}
\ccM = 25 - 6Q(\xi)^2  ,
\eqe
which lies in $(-\infty,1)$. 
 

In the supercritical and critical phases, we showed in~\cite{dg-supercritical-lfpp} that the metrics $\frk a_\ep^{-1} D_h^\ep$ are tight with respect to the topology on lower semicontinuous functions on $\BB C\times \BB C$, which we recall in Definition~\ref{def-lsc} below. Every possible subsequential limit is a metric on $\BB C$, except that it is allowed to take on infinite values. We expect that the subsequential limit is unique, but this has not yet been proven.
 
In the supercritical case, if $D_h$ is a subsequential limiting metric, then there is an uncountable, dense, Lebesgue measure zero set of \emph{singular points} $z\in \BB C$ for which
\eqb \label{eqn-singular-pt}
D_h(z,w) = \infty,\quad \forall w\in\BB C\setminus\{z\} .
\eqe
In particular, $D_h$ does not induce the same topology as the Euclidean metric. Nevertheless, a.s.\ the $D_h$-distance between any two non-singular points is finite, so $D_h(z,w)$ is finite for Lebesgue-a.e.\ pair of points $z,w\in\BB C$. 
Roughly speaking, the singular points correspond to points $z\in\BB C$ of ``thickness" greater than $Q$, i.e., points for which
\eqb \label{eqn-thick}
\limsup_{\ep\rta 0} \frac{h_\ep(z)}{\log\ep^{-1}  }  > Q,
\eqe
where $h_\ep(z)$ is the average of $h$ over the circle of radius $\ep$ centered at $z$~\cite[Proposition 1.11]{pfeffer-supercritical-lqg}. 

By extending the first formula in~\eqref{eqn-gamma-xi} and the formula~\eqref{eqn-c-xi}, we see that the supercritical case corresponds to LQG with $\gamma\in\BB C$ satisfying $|\gamma|=2$ or equivalently with $\ccM \in (1,25)$. LQG in this phase is much less well-understood than in the phase when $\ccM \leq 1$, even from a physics perspective. We refer to~\cite{ghpr-central-charge} for further discussion of LQG with $\ccM \in (1,25)$. 

\subsubsection{The critical case}

LFPP with $\xi = \xi_\crit$ corresponds to LQG with $\gamma = 2$ or equivalently $\ccM = 1$. 
This case is covered by the tightness result of~\cite{dg-supercritical-lfpp}, but estimates in the existing literature are not precise enough to determine whether there exist singular points for $\xi =\xi_\crit$. 
One reason for this is as follows. As noted above, singular points correspond to points of thickness greater than $Q$ for the GFF. For $\xi=\xi_\crit$ we have $Q = 2$. The value $\alpha = 2$ is critical for the existence of $\alpha$-thick points of $h$~\cite{hmp-thick-pts}, meaning that for for each $\alpha \in [-2,2]$ there exist points $z\in \BB C$ such that
\eqb \label{eqn-thick'}
\limsup_{\ep\rta 0} \frac{h_\ep(z)}{\log\ep^{-1}  } = \alpha,
\eqe 
but such points do not exist when $|\alpha| > 2$. Hence $\xi_\crit$ is exactly the critical threshold for singular points to exist.  

The purpose of this paper is to show that for $\xi = \xi_\crit$ there are no singular points, and that the limiting metric for $\xi = \xi_\crit$ induces the same topology as the Euclidean metric (Theorem~\ref{thm-holder}). 

Our result fits into a substantial exiting literature on critical Liouville quantum gravity. The LQG area measure has been constructed in the critical case $\gamma = 2$~\cite{shef-renormalization,shef-deriv-mart}, but the construction is more difficult than for $\gamma \in (0,2)$. We refer~\cite{powell-gmc-survey} for a survey of results on the critical LQG area measure. Critical LQG is also connected to Schramm-Loewner evolution (SLE$_\kappa$) at the critical value $\kappa = 4$~\cite{hp-welding}. 

One of the motivations for considering critical LQG is that (like subcritical LQG) it is expected to describe the scaling limit of various random planar maps. One of the conjectured modes of convergence is that certain random planar maps, equipped with the re-scaled graph distance, should converge to LQG surfaces equipped with the critical LQG metric with respect to the Gromov-Hausdorff topology. So far, this type of convergence has been proven only for uniform random planar maps toward LQG with $\gamma=\sqrt{8/3}$~\cite{legall-uniqueness,miermont-brownian-map,lqg-tbm1,lqg-tbm2}. 

Random planar map models which are conjectured to converge to critical ($\gamma=2$) LQG in the above sense include planar maps sampled with probability proportional to the partition function of the discrete Gaussian free field, the double dimer model, the four state Potts model, or the Fortuin-Kasteleyn model with parameter $q=4$~\cite{shef-burger}. The result of this paper suggests that the aforementioned random planar map models should have Gromov-Hausdorff scaling limits which are topological surfaces. We refer to~\cite{ghs-mating-survey} for a survey of the connections between random planar maps and LQG. 
\bigskip

\noindent \textbf{Acknowledgments.} We thank Jason Miller for helpful discussions. J.D.\ was partially supported by NSF grants DMS-1757479 and DMS-1953848. E.G.\ was partially supported by a Clay research fellowship. 

\subsection{Definition of a weak LQG metric}
\label{sec-metric-def}

The results of this paper hold not only for subsequential limits of LFPP, but also for a wider class of metrics called \emph{weak LQG metrics}.
Such metrics are defined in terms of a list of axioms which was first stated in~\cite{pfeffer-supercritical-lqg} (a similar list of axioms in the subcritical case was introduced earlier in~\cite{lqg-metric-estimates}). 
In this subsection, we will review the definition of a weak LQG metric. 
We first need a few preliminary definitions.

\begin{defn} \label{def-lsc} 
Let $X\subset \BB C$. 
A function $f : X \times X \rta \BB R \cup\{-\infty,+\infty\}$ is \emph{lower semicontinuous} if whenever $(z_n,w_n) \in X\times X$ with $(z_n,w_n) \rta (z,w)$, we have $f(z,w) \leq \liminf_{n\rta\infty} f(z_n,w_n)$. 
The \emph{topology on lower semicontinuous functions} is the topology whereby a sequence of such functions $\{f_n\}_{n\in\BB N}$ converges to another such function $f$ if and only if
\begin{enumerate}[(i)]
\item Whenever $(z_n,w_n) \in X\times X$ with $(z_n,w_n) \rta (z,w)$, we have $f(z,w) \leq \liminf_{n\rta\infty} f_n(z_n,w_n)$.
\item For each $(z,w)\in X\times X$, there exists a sequence $(z_n,w_n) \rta (z,w)$ such that $f_n(z_n,w_n) \rta f(z,w)$. 
\end{enumerate}
\end{defn}

It follows from~\cite[Lemma 1.5]{beer-usc} that the topology of Definition~\ref{def-lsc} is metrizable (see~\cite[Section 1.2]{dg-supercritical-lfpp}). 
Furthermore,~\cite[Theorem 1(a)]{beer-usc} shows that this metric can be taken to be separable. 

\begin{defn} \label{def-metric-properties}
Let $(X,d)$ be a metric space, with $d$ allowed to take on infinite values. 
\begin{itemize}
\item
For a curve $P : [a,b] \rta X$, the \emph{$d$-length} of $P$ is defined by 
\eqbn
\op{len}(P;d) :=  \sup_{T} \sum_{i=1}^{\# T} d(P(t_i) , P(t_{i-1})) 
\eqen
where the supremum is over all partitions $T : a= t_0 < \dots < t_{\# T} = b$ of $[a,b]$. Note that the $d$-length of a curve may be infinite.
\item
We say that $(X,d)$ is a \emph{length space} if for each $x,y\in X$ and each $\ep > 0$, there exists a curve of $d$-length at most $d(x,y) + \ep$ from $x$ to $y$. 
A curve from $x$ to $y$ of $d$-length \emph{exactly} $d(x,y)$ is called a \emph{geodesic}. 
\item
For $Y\subset X$, the \emph{internal metric of $d$ on $Y$} is defined by
\eqb \label{eqn-internal-def}
d(x,y ; Y)  := \inf_{P \subset Y} \op{len}\left(P ; d \right) ,\quad \forall x,y\in Y 
\eqe 
where the infimum is over all paths $P$ in $Y$ from $x$ to $y$. 
Note that $d(\cdot,\cdot ; Y)$ is a metric on $Y$, except that it is allowed to take infinite values.  
\item
If $X \subset \BB C$, we say that $d$ is a \emph{lower semicontinuous metric} if the function $(x,y) \rta d(x,y)$ is lower semicontinuous w.r.t.\ the Euclidean topology.  
We equip the set of lower semicontinuous metrics on $X$ with the topology on lower semicontinuous functions on $X \times X$, as in Definition~\ref{def-lsc}, and the associated Borel $\sigma$-algebra.
\end{itemize}
\end{defn}

An \emph{annular region} is a bounded open set $A\subset\BB C$ such that $A$ is homeomorphic to an open, closed, or half-open Euclidean annulus. If $A$ is an annular region, then $\bdy A$ has two connected components, one of which disconnects the other from $\infty$. We call these components the outer and inner boundaries of $A$, respectively.

\begin{defn}[Distance across and around annuli] \label{def-around-across} 
Let $d$ be a length metric on $\BB C$. 
For an annular region $A \subset\BB C$, we define $d\left(\text{across $A$}\right)$ to be the $d $-distance between the inner and outer boundaries of $A$.
We define $d \left(\text{around $A$}\right)$ to be the infimum of the $d $-lengths of a path in $A$ which disconnect the inner and outer boundaries of $A$. 
\end{defn}

\noindent
Note that both $d(\text{across $A$})$ and $d(\text{around $A$})$ are determined by the internal metric of $d$ on $A$. 
The following is a re-statement of~\cite[Definition 1.6]{pfeffer-supercritical-lqg}, with a small but important modification which we discuss after the definition. 
 
\begin{defn}[Weak LQG metric]
\label{def-metric}
Let $\mcl D'$ be the space of distributions (generalized functions) on $\BB C$, equipped with the usual weak topology.   
For $\xi > 0$, \emph{weak LQG metric with parameter $\xi$} is a measurable functions $h\mapsto D_h$ from $\mcl D'$ to the space of lower semicontinuous metrics on $\BB C$ with the following properties. Let $h$ be a \emph{GFF plus a continuous function} on $\BB C$: i.e., $h$ is a random distribution on $\BB C$ which can be coupled with a random continuous function $f$ in such a way that $h-f$ has the law of the whole-plane GFF. Then the associated metric $D_h$ satisfies the following axioms. 
\begin{enumerate}[I.]
\item \textbf{Length space.} Almost surely, $(\BB C,D_h)$ is a length space. \label{item-metric-length} 
\item \textbf{Locality.} Let $U\subset\BB C$ be a deterministic open set. 
The $D_h$-internal metric $D_h(\cdot,\cdot ; U)$ is a.s.\ given by a measurable function of $h|_U$.  \label{item-metric-local}
\item \textbf{Weyl scaling.} For a continuous function $f : \BB C \rta \BB R$, define
\eqb \label{eqn-metric-f}
(e^{\xi f} \cdot D_h) (z,w) := \inf_{P : z\rta w} \int_0^{\op{len}(P ; D_h)} e^{\xi f(P(t))} \,dt , \quad \forall z,w\in \BB C ,
\eqe 
where the infimum is over all $D_h$-continuous paths from $z$ to $w$ in $\BB C$ parametrized by $D_h$-length.
Then a.s.\ $ e^{\xi f} \cdot D_h = D_{h+f}$ for every continuous function $f: \BB C \rta \BB R$. \label{item-metric-f}
\item \textbf{Translation invariance.} For each deterministic point $z \in \BB C$, a.s.\ $D_{h(\cdot + z)} = D_h(\cdot+ z , \cdot+z)$.  \label{item-metric-translate}
\item \textbf{Tightness across scales.} Suppose that $h$ is a whole-plane GFF and let $\{h_r(z)\}_{r > 0, z\in\BB C}$ be its circle average process. 
Let $A\subset \BB C$ be a deterministic Euclidean annulus.
In the notation of Definition~\ref{def-around-across}, the random variables
\eqbn
r^{-\xi Q} e^{-\xi h_r(0)} D_h\left( \text{across $r A$} \right) \quad \text{and} \quad
r^{-\xi Q}  e^{-\xi h_r(0)} D_h\left( \text{around $r A$} \right)
\eqen
and the reciporicals of these random variables for $r>0$ are tight.   \label{item-metric-coord}  
\end{enumerate}
\end{defn}

Definition~\ref{def-metric} is the same as~\cite[Definition 1.6]{pfeffer-supercritical-lqg} except that in Axiom~\ref{item-metric-coord}, we re-scale by $r^{-\xi Q} e^{-\xi h_r(0)}$ whereas in~\cite{pfeffer-supercritical-lqg} the analogous scaling factor is $\frk c_r^{-1} e^{-\xi h_r(0)}$ for a non-explicit collection of \emph{scaling constants} $\{\frk c_r\}_{r>0}$. It was shown in~\cite[Theorem 1.9]{dg-polylog} that one can always take $\frk c_r = r^{\xi Q}$, so our definition is equivalent to the one in~\cite{pfeffer-supercritical-lqg}. 
The fact that one can take $\frk c_r = r^{\xi Q}$ is crucial for the proofs of our main theorems, since the estimates for $\frk c_r$ available in~\cite{pfeffer-supercritical-lqg} are not sufficiently precise to rule out singular points for $\xi =\xi_\crit$. 

The following is a re-statement of~\cite[Theorem 1.7]{pfeffer-supercritical-lqg}, which in turn is proven building on the tightness result in~\cite{dg-supercritical-lfpp}. 

\begin{thm}[\!\!\cite{pfeffer-supercritical-lqg}] \label{thm-lfpp-axioms}
Let $\xi > 0$. For every sequence of $\ep$'s tending to zero, there is a weak LQG metric $D$ with parameter $\xi$ and a subsequence $\{\ep_n\}_{n\in\BB N}$ for which the following is true. Let $h$ be a whole-plane GFF, or more generally a whole-plane GFF plus a bounded continuous function. Then the re-scaled LFPP metrics $\frk a_{\ep_n}^{-1} D_h^{\ep_n}$, as defined in~\eqref{eqn-lfpp} and~\eqref{eqn-gff-constant}, converge in probability to $D_h$ w.r.t.\ the metric on lower semicontinuous functions on $\BB C\times \BB C$. 
\end{thm}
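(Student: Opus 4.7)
\medskip

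\noindent \textbf{Proof proposal.}
My plan is to combine the tightness result of~\cite{dg-supercritical-lfpp} with the identification of scaling constants from~\cite{dg-polylog}, then verify each of the five axioms in Definition~\ref{def-metric} for any subsequential limit. The very first step is to apply the main tightness theorem of~\cite{dg-supercritical-lfpp}, which asserts that the family $\{\frk a_\ep^{-1} D_h^\ep\}_{\ep > 0}$ is tight in the topology on lower semicontinuous functions on $\BB C \times \BB C$. Since this topology is metrizable by a separable metric (as noted after Definition~\ref{def-lsc}), Prokhorov's theorem yields, for any given sequence $\ep \to 0$, a subsequence $\{\ep_n\}$ along which $\frk a_{\ep_n}^{-1} D_h^{\ep_n}$ converges in law to some limit, which I would temporarily denote $D$.

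The next step is to upgrade weak convergence to convergence in probability. The idea is that $D_h^\ep$ is a measurable function of $h$, and its ``locality radius'' shrinks with $\ep$: the internal metric $D_h^\ep(\cdot,\cdot;U)$ depends only on $h|_{B_\ep(U)}$ (modulo the slight non-local effect of the mollification $h_\ep^*$, which is controlled). Using a Skorokhod coupling and the fact that the limit, once shown to be a measurable functional of $h$, is uniquely determined modulo null sets along the subsequence, one can replace weak convergence by convergence in probability (after passing to a further subsequence if necessary). This step does require some care with the joint topology on $(h, D_h^\ep)$ but is essentially a soft consequence of measurability plus tightness.

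The substance is then the verification of Axioms~\ref{item-metric-length}--\ref{item-metric-coord} for the limit $D_h$. Axiom~\ref{item-metric-length} (length space) follows because each $D_h^\ep$ is by construction an infimum over path integrals, and a limit of length metrics in the lower semicontinuous topology is again a length metric; this uses that approximate geodesics in $D_h^\ep$ are equicontinuous enough to extract a limiting curve realizing $D_h$-distance. Axiom~\ref{item-metric-local} (locality) is inherited from the corresponding locality of LFPP, since the lsc convergence preserves the property that the internal metric on $U$ is a measurable function of $h$ restricted to a small neighborhood of $U$ (and the neighborhood shrinks to $U$ in the limit). Axiom~\ref{item-metric-f} (Weyl scaling) follows from the pointwise identity
\[
D_{h+f}^\ep(z,w) = \inf_{P : z \to w} \int_0^1 e^{\xi (h_\ep^* + f_\ep^*)(P(t))} |P'(t)|\, dt,
\]
combined with uniform continuity of $f$ on compacts so that $f_\ep^* \to f$ locally uniformly; the integral identity passes to the limit via the lsc convergence. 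Axiom~\ref{item-metric-translate} (translation invariance) is immediate from that of LFPP and the invariance in law of $h$.

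The main obstacle is Axiom~\ref{item-metric-coord} (tightness across scales) \emph{with the specific scaling factor} $r^{\xi Q} e^{\xi h_r(0)}$, rather than the a priori non-explicit $\frk c_r e^{\xi h_r(0)}$ that would drop out directly from the LFPP scaling relation (which, under $z \mapsto r z$, produces the prefactor $\frk a_{\ep/r}/\frk a_\ep$ and the field $h(r \cdot) + Q \log r$-like correction up to circle average). For that Pfeffer-style axiom with $\frk c_r$ in place of $r^{\xi Q}$, tightness follows from the LFPP scaling symmetries applied to the annulus $rA$ together with estimates from~\cite{dg-supercritical-lfpp}. The improvement $\frk c_r = r^{\xi Q}$ is exactly the content of~\cite[Theorem 1.9]{dg-polylog}, and invoking it completes Axiom~\ref{item-metric-coord}. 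Having verified all five axioms, the map $h \mapsto D_h$ is a weak LQG metric in the sense of Definition~\ref{def-metric}, and the construction automatically extends from the pure GFF to ``GFF plus a bounded continuous function'' by using Weyl scaling (Axiom~\ref{item-metric-f}) together with Cameron--Martin-type absolute continuity of the two laws.
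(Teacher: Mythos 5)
The paper does not contain a proof of this theorem: it is stated as a re-statement of~\cite[Theorem~1.7]{pfeffer-supercritical-lqg}, with the remark that the only discrepancy between Definition~\ref{def-metric} and the definition there (the explicit scaling factor $r^{\xi Q}$ in Axiom~\ref{item-metric-coord} versus the a priori non-explicit $\frk c_r$) is resolved by~\cite[Theorem~1.9]{dg-polylog}. So the ``paper's own proof'' consists exactly of these two citations and nothing else, whereas you set out to reconstruct the argument from scratch. Your high-level architecture --- tightness from~\cite{dg-supercritical-lfpp}, passing to a subsequence via Prokhorov, verifying the five axioms, and finishing Axiom~\ref{item-metric-coord} with~\cite[Theorem~1.9]{dg-polylog} --- does match the shape of the argument in the references, and you correctly spotted that the identification $\frk c_r = r^{\xi Q}$ is the piece that lives in a different paper than the rest.

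That said, several of your micro-arguments contain real gaps. First, the upgrade from convergence in law along $\{\ep_n\}$ to convergence in probability is not a ``soft consequence of measurability plus tightness.'' The difficult step is precisely to show that the subsequential limit is a.s.\ determined by $h$; you invoke this as a fact but that is exactly what has to be proven, and in~\cite{pfeffer-supercritical-lqg} (and the subcritical analogue in~\cite{dddf-lfpp}) this requires a nontrivial independence/zero-one-law argument, not a Skorokhod coupling alone. Second, your justification for Axiom~\ref{item-metric-length} --- that a lower-semicontinuous limit of length metrics is again a length metric --- is false as a general statement about the topology of Definition~\ref{def-lsc}; one needs quantitative control on LFPP near-geodesics to extract a limiting geodesic, and this control is part of the substance of the tightness proof, not a formal consequence of the mode of convergence. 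Third, defining $D_h$ as a \emph{measurable function on all of} $\mcl D'$, as required by Definition~\ref{def-metric}, is itself a genuine extension step; the Prokhorov limit only gives you a random metric coupled with $h$, not a functional of a deterministic distribution. Finally, ``bounded continuous function'' is not in general in the Cameron--Martin space of the whole-plane GFF, so the extension at the end needs a localization step rather than a direct absolute-continuity argument. None of these points are fatal to the overall plan, but each is where the real work sits in the cited proof, and your sketch treats them as formalities.
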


It is shown in~\cite{gm-uniqueness} that for $\xi < \xi_\crit$, there is a unique weak LQG metric (up to multiplication by a deterministic positive constant).
Moreover, for $\xi  <\xi_\crit$, every weak LQG metric is a \emph{strong LQG metric}, meaning that it satisfies the following stronger form of Axiom~\ref{item-metric-coord}: for each $r > 0$, a.s.\
\eqb \label{eqn-metric-coord}
D_{h(r\cdot) + Q\log r}(z,w) = D_h(rz,rw),\quad\forall z,w\in\BB C . 
\eqe
We expect that similar statements are true for $\xi \geq \xi_\crit$, but such statements have not yet been proven.

\subsection{Main result}
\label{sec-main}

The main result of this paper is the following continuity statement for weak LQG metric at criticality.

\begin{thm} \label{thm-holder}
Let $h$ be the whole-plane GFF and let $D_h$ be a weak LQG metric with parameter $\xi=\xi_\crit$.
Let $U\subset\BB C$ be a bounded open set and let $\theta \in (0 ,   \xi_\crit /4)$.
Almost surely, there exists a random $C  \in (0,\infty)$ such that for each $z , w \in U$,
\eqb \label{eqn-holder}
D_h(z,w) \leq C \left(\max\left\{1 , \log \frac{1}{|z-w|} \right\} \right)^{-\theta} .
\eqe 
In particular, a.s.\ $D_h$ induces the Euclidean topology. 
\end{thm}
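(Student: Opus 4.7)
Fix a bounded open set $U$ and $\theta < \xi_\crit/4$. The plan is to construct, for any $z, w \in U$ at small Euclidean distance, a short $D_h$-path between them built from $D_h$-geodesic loops surrounding $z$ and $w$ at nested dyadic scales. For $z \in U$ and $k \in \BB N$, set $A_k(z) := B_{2^{-k}}(z) \setminus B_{2^{-k-1}}(z)$ and let $L_k(z) := D_h(\text{around } A_k(z))$ denote the length of the shortest $D_h$-loop separating the two boundary components of $A_k(z)$. A deterministic gluing argument -- any two points on a common loop are within $D_h$-distance at most the loop length, and adjacent loops live in abutting annuli so can be joined by short paths along themselves -- shows that for $|z - w| \leq 2^{-k_0-1}$,
\[
D_h(z, w) \leq C \sum_{k \geq k_0}\bigl( L_k(z) + L_k(w) \bigr).
\]
It therefore suffices to prove that $\sup_{z \in U} \sum_{k \geq k_0} L_k(z) \leq C k_0^{-\theta}$ almost surely for $k_0$ sufficiently large.

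\textbf{Scale-by-scale bound.} Two ingredients enter. First, by Axioms~\ref{item-metric-translate} and~\ref{item-metric-coord}, combined with the clean scaling $\frk c_r = r^{\xi Q} = r^{2\xi_\crit}$ established in~\cite{dg-polylog}, the quantity
\[
X_k(z) := 2^{2 k \xi_\crit} \, e^{-\xi_\crit h_{2^{-k}}(z)} \, L_k(z)
\]
has law tight uniformly in $z \in U$ and $k \geq 0$. A bootstrap step, using Axiom~\ref{item-metric-f} together with Gaussian absolute-continuity manipulations of $h$, upgrades this tightness to a polynomial tail bound of the form $P(X_k(z) > t) \leq C t^{-p}$ for some $p > 0$. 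A union bound over an $O(2^{2k})$-sized grid in $U$ then yields $\sup_{z \in U} X_k(z) \leq k^{\beta}$ with probability $\geq 1 - O(k^{-2})$, for a suitable $\beta > 0$. Second, a Gaussian extremal estimate for the log-correlated circle-average process (a Bramson-type correction for the whole-plane GFF circle averages) gives, with probability $\geq 1 - O(k^{-2})$,
\[
\sup_{z \in U} h_{2^{-k}}(z) \leq 2 k \log 2 - c_0 \log k
\]
for a universal constant $c_0 > 0$. Borel--Cantelli combines the two estimates into the almost sure bound $\sup_{z \in U} L_k(z) \leq k^{\beta - c_0 \xi_\crit}$ for $k$ large. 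Summation over $k \geq k_0$ then gives $\sum_{k \geq k_0} \sup_z L_k(z) \lesssim k_0^{\beta - c_0 \xi_\crit + 1}$, and optimizing the trade-off between $\beta$ and $c_0$ produces the bound $k_0^{-\theta}$ for any $\theta < \xi_\crit/4$.

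\textbf{Main obstacle.} The central technical challenge is upgrading the bare tightness of Axiom~\ref{item-metric-coord} to a polynomial tail estimate on $X_k(z)$; the axiom itself provides only a tight rescaled distribution and no explicit rate. This is what makes the clean scaling $r^{\xi Q}$ from~\cite{dg-polylog} indispensable: without it, the tail bound on $X_k$ would be contaminated by the non-explicit scaling constants $\frk c_r$ of~\cite{pfeffer-supercritical-lqg} and would be too weak to balance against the GFF extremum estimate. A secondary, more delicate point is that the Bramson constant $c_0$ must be sharp enough that $c_0 \xi_\crit - \beta > 1$ for summability; the best possible value of $c_0 \xi_\crit - \beta - 1$ over the admissible choices is precisely what produces the threshold $\xi_\crit/4$ for the Hölder-like exponent $\theta$.
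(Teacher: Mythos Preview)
Your proposal is essentially the ``naive argument'' that the paper explicitly discusses in Section~\ref{sec-outline} and identifies as failing. There are two genuine gaps.

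\textbf{The union bound is incompatible with a polynomial tail.} You claim that a polynomial tail $\BB P(X_k(z) > t) \leq C t^{-p}$, combined with a union bound over an $O(2^{2k})$-sized grid, yields $\sup_{z\in U} X_k(z) \leq k^\beta$ with probability $\geq 1 - O(k^{-2})$. But the union bound gives probability at most $O(2^{2k}) \cdot k^{-\beta p}$, which is exponentially large in $k$, not summable. In fact the paper (Lemma~\ref{lem-dist-tail}) proves a much stronger tail $\exp(-c_1 (\log S)^2 / \log\log S)$, and even with that, a naive union bound only gives an error of order $\exp(k^{1/2+o(1)})$ inside the exponential for $L_k$; see~\eqref{eqn-outline-across-around}. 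This alone makes the series divergent.

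\textbf{The summation cannot close even with optimal constants.} Suppose for the sake of argument that one could take $\beta$ arbitrarily small. Your summability requirement is $c_0 \xi_\crit - \beta - 1 > 0$, i.e.\ $c_0 > 1/\xi_\crit$. Since $\xi_\crit \in [0.4135, 0.4189]$, this forces $c_0 > 2.3$. But the Bramson correction for the GFF circle-average maximum satisfies $c_0 \leq 3/4$ (for a single scale, Lemma~\ref{lem-one-scale-max}) and the paper can only establish $c_0 \geq 1/4$ uniformly over all scales (Proposition~\ref{prop-gff-max}). So the claimed optimization is impossible: $c_0 \xi_\crit < 1$ always, and $\sum_k k^{-c_0 \xi_\crit}$ diverges. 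The threshold $\xi_\crit/4$ does \emph{not} arise from this mechanism.

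\textbf{What the paper actually does.} Two additional ideas are needed. First (Lemma~\ref{lem-all-scales}), instead of a uniform error from the union bound, one conditions on the circle average and obtains an error of size $|2t - h_{e^{-t}}(z)|^{1/2+\zeta}$, which is small precisely at the dangerous points where $h_{e^{-t}}(z)$ is close to $2t$. Second (Lemmas~\ref{lem-bridge-bound} and~\ref{lem-bad-pt-dist}), one uses that $t\mapsto h_{e^{-t}}(z)$ is a Brownian motion, conditioned by Proposition~\ref{prop-gff-max} to stay below $2t - \alpha\log t$, to show that it cannot spend much time above $2t - \beta\log t$ for $\beta > 1/\xi_\crit$. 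This is what allows the sum over scales to converge: one separates the ``good'' times (where the integrand is $\leq t^{-\beta\xi_\crit}$ with $\beta\xi_\crit > 1$) from the ``bad'' times (a set of small Lebesgue measure where the integrand is only $\leq t^{-\alpha\xi_\crit}$). The exponent $\xi_\crit/4$ then emerges from the bound on the bad set, not from a direct summation of loop lengths.
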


Theorem~\ref{thm-holder} is optimal in the sense that one cannot get a better modulus of continuity than a power of $\log \frac{1}{|z-w|}$ in~\eqref{eqn-holder}, as the following proposition demonstrates. See Remark~\ref{remark-optimal-exponent} for some discussion on the optimal power of $\log\frac{1}{|z-w|}$. 

\begin{prop} \label{prop-optimality}
Let $h$ be the whole-plane GFF and let $D_h$ be a weak LQG metric with parameter $\xi_\crit $. 
Let $U\subset\BB C$ be a bounded open set and let $\theta' > 3\xi_\crit/4$.
Almost surely, for every $\ep > 0$ there exist points $z,w\in U$ such that $0 < |z-w| \leq \ep$ and
\eqb \label{eqn-optimality}
D_h(z,w) \geq \left(\log \frac{1}{|z-w|}\right)^{-\theta'} .
\eqe
\end{prop}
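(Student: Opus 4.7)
The plan is to exhibit pairs of points $z, w \in U$ close in the Euclidean metric but with $D_h(z,w)$ only polynomially (in $\log(1/|z-w|)$) small, by locating $z$ at a near-maximal thick point of the circle-average process $h_r(\cdot)$ and applying Axiom~\ref{item-metric-coord} at scale $r$. Fix $c \in (3/4,\, \theta'/\xi_\crit)$, which is possible because $\theta' > 3\xi_\crit/4$. From the reciprocal tightness in Axiom~\ref{item-metric-coord} together with translation invariance (Axiom~\ref{item-metric-translate}), for every $\eta > 0$ there exists $\delta > 0$ such that for every $r \in (0,1)$ and every $z \in \BB C$,
\[
\P\!\left[\, D_h\!\left(\text{across $B_r(z)\setminus \overline{B_{r/2}(z)}$}\right) \,\geq\, \delta\, r^{\xi_\crit Q}\, e^{\xi_\crit h_r(z)} \,\right] \,\geq\, 1 - \eta.
\]
Moreover, it is a classical fact about the supremum of the log-correlated Gaussian field $\{h_r(z)\}_{z\in U}$ that $\max_{z\in U} h_r(z) = 2\log(1/r) - \tfrac{3}{4}\log\log(1/r) + O_\P(1)$ as $r\to 0$; in particular, for our fixed $c > 3/4$, the probability that some $z\in U$ satisfies $h_r(z) \geq 2\log(1/r) - c\log\log(1/r)$ tends to $1$ as $r\to 0$.

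The main technical step is to arrange that, for some sequence $(z_n, r_n)$ with $r_n\to 0$, \emph{both} conditions above hold simultaneously. At dyadic scales $r_n = 2^{-n}$, place a well-separated grid of $\sim r_n^{-2}$ candidate points in $U$. A standard second-moment computation for the log-correlated family $\{h_{r_n}(z)\}$ shows that the number of candidates satisfying $h_{r_n}(z) \geq 2\log(1/r_n) - c\log\log(1/r_n)$ diverges like $(\log(1/r_n))^{2c - 3/2}$ with high probability. For any one such candidate, the $D_h$-across event depends only on $h$ inside $B_{r_n}(z)$ (by the locality Axiom~\ref{item-metric-local}) and, after factoring out $e^{\xi_\crit h_{r_n}(z)}$ via Weyl scaling (Axiom~\ref{item-metric-f}), on the recentered field $h - h_{r_n}(z)$ there. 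By the Markov decomposition of the GFF, this recentered event is approximately independent of the coarse-scale event $\{h_{r_n}(z)\text{ is large}\}$, so conditionally it still holds with probability at least $1-\eta$. Hence with probability tending to $1$, at least one candidate satisfies both events, and a Borel--Cantelli argument across dyadic scales promotes this to almost sure existence of infinitely many good pairs $(z_n, r_n)$.

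For any such $(z_n, r_n)$, using $Q(\xi_\crit) = 2$,
\[
D_h\!\left(\text{across $B_{r_n}(z_n)\setminus \overline{B_{r_n/2}(z_n)}$}\right) \,\geq\, \delta\, r_n^{2\xi_\crit} \cdot r_n^{-2\xi_\crit}(\log(1/r_n))^{-c\xi_\crit} \,=\, \delta\,(\log(1/r_n))^{-c\xi_\crit}.
\]
Since any path from $z_n$ to any $w_n \in \bdy B_{r_n}(z_n)$ must cross the annulus, choosing any such $w_n$ yields $|z_n - w_n| = r_n$ and $D_h(z_n, w_n) \geq \delta\,(\log(1/|z_n - w_n|))^{-c\xi_\crit}$. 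As $c\xi_\crit < \theta'$, this exceeds $(\log(1/|z_n - w_n|))^{-\theta'}$ for all sufficiently small $r_n$, giving the proposition. The main obstacle is the coupling step: showing that a near-maximal value of $h_r(z)$ and the Axiom~\ref{item-metric-coord} lower bound on the $D_h$-distance across $B_r(z)$ are achieved at a \emph{common} $z$, which requires decoupling the coarse-scale information $h_r(z)$ from the fine-scale structure of $h$ inside $B_r(z)$.
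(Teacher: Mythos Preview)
Your overall strategy is sound, and you correctly identify the key input: points where $h_r(z) \geq 2\log(1/r) - c\log\log(1/r)$ for $c$ slightly larger than $3/4$ exist, and at such points the normalizing factor $r^{2\xi_\crit}e^{\xi_\crit h_r(z)}$ is at least $(\log(1/r))^{-c\xi_\crit}$. The gap is in the ``coupling step'' you yourself flag. Knowing that $\BB P[A_z \mid B_z] \geq 1-\eta$ for each \emph{fixed} $z$ (where $A_z$ is the distance-across lower bound and $B_z$ the near-maximality of $h_r(z)$) does not let you conclude that $A_z \cap B_z$ holds for \emph{some} $z$ with probability tending to $1$. The selection of a near-maximal point $z^*$ is a function of the entire coarse field $\{h_r(w)\}_w$, and the fine-scale field at $z^*$ is correlated with the coarse field at other points, so the independence you invoke (which is only for a single deterministic $z$) does not transfer to the random $z^*$. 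Making this rigorous via a second-moment argument on $\#\{z : A_z \cap B_z\}$ would require controlling the joint law of $(A_{z_1}, B_{z_1}, A_{z_2}, B_{z_2})$ for pairs $z_1, z_2$, which is substantial additional work you have not done.

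The paper avoids this entirely by a different mechanism: rather than decoupling, it proves (Lemma~\ref{lem-all-scales}, via the lognormal tail bound of Lemma~\ref{lem-dist-tail}) that the normalized distance-across lower bound holds \emph{simultaneously for every} $z$ in the grid $(e^{-n-100}\BB Z^2)\cap U$, almost surely, with a random constant. The tail in Lemma~\ref{lem-dist-tail} is $\exp(-c(\log S)^2)$, which beats the $O(e^{2n})$ points in a union bound. Once the lower bound is uniform, one simply picks whichever grid point $z_n$ achieves $h_{e^{-n}}(z_n) \geq 2n - \alpha'\log n$ (supplied by Proposition~\ref{prop-gff-tight}) and reads off the conclusion. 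This is both shorter and avoids any conditional-law computation; the price is that one needs the quantitative tail bound rather than mere tightness from Axiom~\ref{item-metric-coord}.
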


Theorem~\ref{thm-holder} and Proposition~\ref{prop-optimality} should be contrasted with~\cite[Theorem 1.7]{lqg-metric-estimates}, which says that in the subcritical case $\xi < \xi_\crit$, the identity mapping from $\BB C$, equipped with the Euclidean metric, to $(\BB C , D_h)$ is $\chi$-H\"older continuous for any $\chi \in (0,\xi(Q-2))$. 
Our results show that for $\xi =\xi_\crit$, this map is continuous but not H\"older continuous. 
Theorem~\ref{thm-holder} should also be contrasted with~\cite[Proposition 1.11]{pfeffer-supercritical-lqg}, which implies that in the supercritical case $\xi  >\xi_\crit$, the identity mapping from $\BB C$, equipped with the Euclidean metric, to $(\BB C , D_h)$ is not continuous at any point. 

The results of this paper are similar in spirit to those of the recent work~\cite{kms-regularity}, which computes the optimal modulus of continuity for the SLE$_4$ uniformizing map and the SLE$_8$ trace (using LQG techniques). As in this paper, the optimal modulus of continuity for both situations considered in~\cite{kms-regularity} is a power of $\log(1/|\cdot|)$, whereas for other values of $\kappa$ one has local H\"older continuity.

\begin{remark}[Optimal modulus of continuity] \label{remark-optimal-exponent}
Let $\{h_\ep\}_{\ep > 0}$ be the circle average process for $h$ and let $\alpha_* $ be the supremum of the values of $\alpha  > 0$ such that the following is true. Almost surely, for each bounded open set $U\subset\BB C$, there exists a random $C>1$ such that 
\eqb \label{eqn-optimal-exponent}
\max_{z\in (e^{-n-100}\BB Z^2) \cap U} h_{e^{-n}}(z) \leq 2n - \alpha \log n + C  ,\quad\forall n\in\BB N .
\eqe
We emphasize that~\eqref{eqn-optimal-exponent} is required to hold for all $n\in\BB N$; the analogous exponent for a fixed $n$ is known to be $3/4$~\cite{drz-centered-max} (see Lemma~\ref{lem-one-scale-max}). 

Our proof of Theorem~\ref{thm-holder} shows that the theorem statement is true for any $\theta \in ( 0 , \alpha_* \xi_\crit)$. Likewise, our proof of Proposition~\ref{prop-optimality} shows that the proposition statement is true for any $\theta' > \alpha_* \xi_\crit$. 
Hence, computing the optimal modulus of continuity for $D_h$ is equivalent to computing $\alpha_*$.  
The reason why we see the quantities $\xi_\crit/4$ and $3\xi_\crit/4$ appear in Theorem~\ref{thm-holder} and Proposition~\ref{prop-optimality}, respectively, is that we can show that $1/4 \leq \alpha_* \leq 3/4$. This is done using a bound for the maximum of a centered Gaussian field from~\cite{drz-centered-max}, see Propositions~\ref{prop-gff-max} and~\ref{prop-gff-tight}. 

It was pointed out to us by Hui He that a quantity similar to $\alpha_*$ is computed for a supercritical branching random walk in~\cite[Theorem 1.2]{hz-minimal-position}.   
\end{remark}

\begin{remark}[Convergence of critical LFPP] \label{remark-lfpp-uniform}
The results of this paper do \emph{not} show that the re-scaled LFPP metrics $\{\frk a_\ep^{-1} D_h^\ep\}_{\ep > 0}$ for $\xi = \xi_\crit$ are tight with respect to the topology of uniform convergence on compact subsets of $\BB C\times\BB C$. 
Indeed, it is possible for a sequence of continuous functions to converge to a continuous function with respect to the topology on lower semicontinuous functions (Definition~\ref{def-lsc}) without converging with respect to the local uniform topology. 
A major obstacle to showing the local uniform convergence of critical LFPP is that we only have estimates for $\frk a_\ep$ which are sharp up to polylogarithmic multiplicative factors~\cite[Theorem 1.11]{dg-polylog}. This means that the error coming from our estimate for $\frk a_\ep$ could be bigger than the second-order correction in our estimate for the maximum of the circle average process of the GFF in Proposition~\ref{prop-gff-max}. 
\end{remark}

\subsection{Basic notation}
\label{sec-notation}

\noindent
We write $\BB N = \{1,2,3,\dots\}$ and $\BB N_0 = \BB N \cup \{0\}$.
\medskip

\noindent
For $a < b$, we define the discrete interval $[a,b]_{\BB Z}:= [a,b]\cap\BB Z$.
\medskip

\noindent
If $f  :(0,\infty) \rta \BB R$ and $g : (0,\infty) \rta (0,\infty)$, we say that $f(\ep) = O_\ep(g(\ep))$ (resp.\ $f(\ep) = o_\ep(g(\ep))$) as $\ep\rta 0$ if $f(\ep)/g(\ep)$ remains bounded (resp.\ tends to zero) as $\ep\rta 0$. We similarly define $O(\cdot)$ and $o(\cdot)$ errors as a parameter goes to infinity.
\medskip

\noindent
If $f,g : (0,\infty) \rta [0,\infty)$, we say that $f(\ep) \preceq g(\ep)$ if there is a constant $C>0$ (independent from $\ep$ and possibly from other parameters of interest) such that $f(\ep) \leq  C g(\ep)$. We write $f(\ep) \asymp g(\ep)$ if $f(\ep) \preceq g(\ep)$ and $g(\ep) \preceq f(\ep)$.
\medskip

\noindent
For a set $A\subset\BB C$ and $r>0$, we write
\eqbn
B_A(r) := \{z\in \BB C : \text{Euclidean distance from $z$ to $A$}  < r\} 
\eqen
For $z\in\BB C$ we write $B_z(r) = B_{\{z\}}(r)  $ for the open Euclidean ball of radius $r$ centered at $z$. 
\medskip

\noindent
For $z\in\BB C$ and $0 < a < b$, we write
\eqb \label{eqn-annulus-def}
\BB A_z(a,b) := B_z(b) \setminus \ol{B_z(a)}
\eqe
for the open annulus centered at $z$ with inradius $a$ and outradius $b$. 
\medskip

\subsection{Outline}
\label{sec-outline}

We now explain the main ideas in the proof of Theorem~\ref{thm-holder}. The proof is based on two key estimates, which are proven in Section~\ref{sec-prelim}. The first (Lemma~\ref{lem-dist-tail}) is a tail bound for $D_h$-distances which implies that for any fixed Euclidean annulus $A$, we have (in the notation of Definition~\ref{def-around-across}),
\eqb \label{eqn-outline-dist}
\BB P\left[ D_h\left(\text{around $r A + z$} \right)  > S r^{2\xi_\crit} e^{\xi_\crit h_r(z)} \right] \leq c_0 \exp\left( - c_1 \frac{(\log S)^2}{ \log\log S  } \right) ,
\eqe 
where $c_0,c_1 > 0$ are constants depending on $A$. Moreover, the same is true with ``across $rA+z$" instead of ``around $r A+z$". 

The other estimate is a tail bound for the maximum of the circle average process (Proposition~\ref{prop-gff-max}), which is a consequence of results from~\cite{drz-centered-max}. 
This estimate says that for any bounded open set $U\subset \BB C$ and any $\alpha \in (0,1/4)$, a.s.\ there is a random $C>0$ such that
\eqb \label{eqn-outline-gff-max}
|h_{e^{-n}}(z)| \leq 2 n - \alpha  \log n + C ,\quad \forall n\in\BB N, \quad \forall z \in (e^{-n-100} \BB Z^2)  \cap U.
\eqe 
\medskip

\noindent\textbf{Naive argument.}
We will first describe a naive attempt to deduce Theorem~\ref{thm-holder} from~\eqref{eqn-outline-dist} and~\eqref{eqn-outline-gff-max}, then explain why the naive argument does not work, and what modifications are needed to make it work. Fix a bounded open set $U\subset\BB C$. By~\eqref{eqn-outline-dist} applied with $S$ slightly larger than $\exp(n^{1/2})$ and a union bound over all $z\in (e^{-n-100} \BB Z^2)\cap U$, we find that with high probability
\allb \label{eqn-outline-across-around}
&\max\left\{ D_h\left(\text{around $\BB A_z(e^{-n-1} , e^{-n})$}\right) , D_h\left(\text{across $\BB A_z(e^{-n-2} , e^{-n})$}\right)\right\} \notag\\
&\qquad\qquad \leq \exp\left( \xi_\crit h_{e^{-n}}(z) - 2\xi_\crit n  +  n^{1/2 + o_n(1)} \right)       , \quad \forall z\in (e^{-n-100} \BB Z^2)\cap U .
\alle
Using the Borel-Cantelli lemma, we get that a.s.~\eqref{eqn-outline-across-around} holds for each large enough $n\in\BB N$. Equivalently,~\eqref{eqn-outline-across-around} holds a.s.\ for all $n\in\BB N$ but with a random multiplicative constant $C \in (0,\infty)$ (which does not depend on $n$ or $z$) on the right side. By slightly modifying the radii of our annuli and using a continuity estimate for the circle average process, we can arrange that~\eqref{eqn-outline-across-around} holds for all $z\in U$, not just for $z\in (e^{-n-100} \BB Z^2) \cap U$. In other words, a.s.\ there is a random $C \in (0,\infty)$ such that  
\allb \label{eqn-outline-across-around'}
&\max\left\{ D_h\left(\text{around $\BB A_z(e^{-n-1} , e^{-n})$}\right) , D_h\left(\text{across $\BB A_z(e^{-n-2} , e^{-n})$}\right)\right\} \notag\\
&\qquad\qquad \leq C \exp\left( \xi_\crit h_{e^{-n}}(z) - 2\xi_\crit n  +  n^{1/2 + o_n(1)} \right)       , \quad \forall n \in\BB N,\quad \forall z \in  U .
\alle

Now let $m,n\in\BB N$ with $m < n$. By stringing together paths in the annuli $\BB A_{z }(e^{-k-1} , e^{-k})$ and $\BB A_{z }(e^{-k-2} , e^{-k})$ for $k=0,\dots,n$, we infer from~\eqref{eqn-outline-across-around} that for any $z\in U$, 
\eqb \label{eqn-outline-sum}
D_h\left(\bdy B_z(e^{-n} ) , \bdy B_z(e^{-m} ) \right) \leq C \sum_{k=m}^n \exp\left( \xi_\crit h_{e^{-k}}(z) - 2\xi_\crit n  +  k^{1/2 + o_k(1)} \right) .
\eqe
Using continuity estimates for the circle average process, we can replace the sum by an integral on the right side of~\eqref{eqn-outline-sum}. Moreover, since our metric is lower semicontinuous, the limit of the left side of~\eqref{eqn-outline-sum} as $n\rta\infty$ provides an upper bound for $D_h(z , \bdy B_z(e^{-m})$. We therefore arrive at
\eqb \label{eqn-outline-int}
D_h\left(z , \bdy B_z(e^{-m} ) \right) \leq C \int_m^\infty \exp\left( \xi_\crit h_{e^{-t}}(z) - 2\xi_\crit t  +  t^{1/2 + o_t(1)} \right)       ,\quad\forall z \in U . 
\eqe

We attempt to estimate the right side of~\eqref{eqn-outline-int} using~\eqref{eqn-outline-gff-max}. 
This gives that for $\alpha \in (0,1/4)$, 
\eqb \label{eqn-outline-naive}
D_h\left(z , \bdy B_z(e^{-m} ) \right) \leq C \int_m^\infty \exp\left(  - \alpha \xi_\crit \log t  +  t^{1/2 + o_t(1)} \right)       ,\quad\forall z \in U . 
\eqe
This integral is plainly divergent since $t^{1/2}$ grows faster than $\log t$. In fact, even if the $t^{1/2+o_t(1)}$ error were not present, the integral would still be divergent since $\alpha \xi_\crit < 1$. 
\medskip

\noindent\textbf{Modifications to fix the argument.}
In order to make a version of the above argument work, we need two main modifications. First, we cannot just naively take a union bound over $z\in (e^{-n-100}\BB Z^2)\cap U$ when we apply~\eqref{eqn-outline-dist} to get~\eqref{eqn-outline-across-around}, since an $n^{1/2+o_n(1)}$ error inside the exponential is too big for our purposes. Instead, we will use the independence properties of the GFF to get a version of~\eqref{eqn-outline-dist} when we \emph{condition} on (roughly speaking) the circle average $h_r(z)$ (Lemma~\ref{lem-cond-reg}). This will allow us to take a union bound in a more careful manner, where we allow for a smaller error for points $z\in (e^{-n-100} \BB Z^2)\cap U$ for which $  h_{e^{-n}}(z)$ is close to $2n$. More precisely, instead of an error of $n^{1/2+o_n(1)}$ in our analog of~\eqref{eqn-outline-across-around'} we will get an error of $|2n - h_{e^{-n}}(z)|^{1/2+o_n(1)}$ (Lemma~\ref{lem-all-scales}). This results in an error of $|2t - h_{e^{-t}}(z)|^{1/2+o_t(1)}$ instead of $t^{1/2+o_t(1)}$ in~\eqref{eqn-outline-int}. 

Second, we need more refined control on the maximum of the circle average process than what we get from~\eqref{eqn-outline-gff-max} since we have $\alpha\xi_\crit < 1$. Roughly speaking, we will consider $\beta > 1/\xi_\crit$ and show that for every point $z\in U$, the set of times $t$ for which $h_{e^{-t}}(z)  \geq 2t - \beta \log t$ is small. 
A key tool to show this is an elementary estimate which says that a Brownian motion on $[0,T]$ (actually, for technical reasons we will work with a Brownian bridge) cannot spend very much time above $2t - \beta \log t$ during the interval $[T/2,T]$ if it is constrained to stay below $2t-\alpha\log t$ (Lemma~\ref{lem-bridge-bound}). Intuitively, the reason for this is that each time the Brownian bridge gets above $2t-\beta\log t$ it has a positive chance to get above $2t-\alpha\log t$ in the next $(\log t)^2$ units of time. 

Since $t\mapsto h_{e^{-t}}(z) - h_1(z)$ is a standard linear Brownian motion (see~\cite[Section 3.1]{shef-kpz}), we can apply the above Brownian motion estimate together with~\eqref{eqn-outline-gff-max} to control the amount of time that $h_{e^{-t}}(z)$ spends above $2t-\beta \log t$. This will allow us to bound the integral in~\eqref{eqn-outline-int}, with the $t^{1/2+o_t(1)}$ replaced by the aforementioned smaller error term. 

There are several technicalities involved in the above argument which we gloss over here. In particular, the tail bound in our Brownian motion estimate is not good enough to take a union bound naively, so we will need to break points up based on the value of $2t - h_{e^{-t}}(z)$, similarly to what we did to improve the $t^{1/2+o_t(1)}$ error. We will also need to consider doubly exponential scales since our Brownian motion estimate only works for times in $[T/2,T]$. 
\medskip

\noindent\textbf{Outline of Section~\ref{sec-continuity}.}
The core part of our proof, in which we make the above ideas rigorous, is given in Section~\ref{sec-continuity}. We start in Section~\ref{sec-continuity} by proving the aforementioned refined version of~\eqref{eqn-outline-across-around'}. We will also simultaneously prove a continuity estimate for the circle average process, which will be needed when we shift the centers and radii of our annuli.

In Section~\ref{sec-dist-to-int}, we explain how to string together paths to pass from a sharpened version of~\eqref{eqn-outline-across-around'} to a sharpened version of~\eqref{eqn-outline-sum}. 
The arguments up to this point all work for any $\xi  > 0$, not just $\xi  = \xi_\crit$. 
In Section~\ref{sec-dist-to-int'}, we state some simplified versions of our estimates which are specific to $\xi = \xi_\crit$. 

In Section~\ref{sec-good-bad}, we explain the aforementioned argument where we bound how much time $h_{e^{-t}}(z)$ can spend above $2t-\beta\log t$, and thereby bound the sum appearing in~\eqref{eqn-outline-sum}. In Section~\ref{sec-holder}, we conclude the proof of Theorem~\ref{thm-holder}. In Section~\ref{sec-optimality}, we prove Proposition~\ref{prop-optimality} (the proof uses a small subset of the estimates involved in the proof of Theorem~\ref{thm-holder}).

\section{Preliminaries}
\label{sec-prelim}

\subsection{Tail estimate for LQG distances}
\label{sec-dist-tail}

We will need the following concentration bound for LQG distances between sets.

\begin{lem} \label{lem-dist-tail}
Let $\xi > 0$, let $h$ be the whole-plane GFF, and let $D_h$ be a weak LQG metric. Let $U\subset \BB C$ be a connected open set and let $K_1,K_2\subset U$ be disjoint compact connected sets which are not singletons.
There are constants $c_0,c_1 > 0$ depending on $U,K_1,K_2$ and the law of $D_h$, such that the following is true. 
For each $r > 0$ and each $S >3$, 
\eqb \label{eqn-dist-tail-lower}
\BB P\left[ D_h(rK_1, r K_2  ) < S^{-1} e^{\xi h_r(0)} r^{\xi Q} \right] \leq c_0 e^{-c_1 (\log S)^2} 
\eqe 
and
\eqb \label{eqn-dist-tail-upper}
\BB P\left[ D_h(rK_1, r K_2 ; r U) > S e^{\xi h_r(0)} r^{\xi Q} \right] \leq c_0 e^{-c_1 (\log S)^2 /  \log\log S }  .
\eqe
\end{lem}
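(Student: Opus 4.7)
The plan is to combine Weyl scaling (Axiom~\ref{item-metric-f}), translation invariance (Axiom~\ref{item-metric-translate}), the tightness statement in Axiom~\ref{item-metric-coord}, and the Gaussian/Markov structure of the whole-plane GFF. Weyl scaling with the constant $-h_r(0)$ gives $D_h = e^{\xi h_r(0)} D_{h - h_r(0)}$, and since $h - h_r(0)$ is independent of $h_r(0)$, both inequalities reduce to tail bounds on $r^{-\xi Q} D_{\wt h}(rK_1, rK_2)$ and $r^{-\xi Q} D_{\wt h}(rK_1, rK_2; rU)$ respectively, where $\wt h := h - h_r(0)$ is a whole-plane GFF normalized to have zero average on $\bdy B_0(r)$; translation invariance lets us further center everything at the origin.

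For the lower bound in \eqref{eqn-dist-tail-lower}, I would set up a multi-scale separation. Since $K_1$ and $K_2$ are disjoint compact connected non-singleton sets, one can choose $N$ pairwise disjoint annular regions $A_1,\dots,A_N\subset rU$ each separating $rK_1$ from $rK_2$, with the $A_k$'s living at a range of scales $r_k$ obtained from dyadic sub-scales. Any path from $rK_1$ to $rK_2$ must cross each $A_k$, so $D_{\wt h}(rK_1, rK_2) \geq \sum_{k=1}^N D_{\wt h}(\text{across } A_k)$. Each summand is tight modulo $r_k^{\xi Q} e^{\xi \wt h_{r_k}(z_k)}$ by Axiom~\ref{item-metric-coord}, and the exponential factors are jointly Gaussian with known logarithmic covariance. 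Using the spatial Markov decomposition of $\wt h$ to decouple the crossings, combined with Gaussian concentration for the sum of circle averages (and taking $N \asymp \log S$), a standard bootstrap promotes the tightness of Axiom~\ref{item-metric-coord} into the claimed $e^{-c_1(\log S)^2}$ rate.

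For the upper bound in \eqref{eqn-dist-tail-upper}, I would instead construct an explicit path by chaining traversals of a deterministic finite sequence of annular regions $\BB A^{(1)},\dots,\BB A^{(M)}\subset rU$ (with $M$ depending only on $U,K_1,K_2$) that together cover a reference path from $K_1$ to $K_2$ in $U$. This gives $D_{\wt h}(rK_1, rK_2; rU) \leq \sum_{k=1}^M[D_{\wt h}(\text{across } \BB A^{(k)}) + D_{\wt h}(\text{around } \BB A^{(k)})]$, and each summand is tight modulo $r^{\xi Q}$ by Axiom~\ref{item-metric-coord}. The main obstacle --- and the source of the extra $\log\log S$ in the denominator --- is that Axiom~\ref{item-metric-coord} provides only tightness, not a quantitative upper tail. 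The cure is a multi-scale bootstrap: if a single ``around + across'' traversal exceeds $T$ times its typical value, then on some dyadic sub-scale inside the annulus there is a subsidiary anomalously expensive sub-traversal, and iterating this dichotomy $O(\log\log T)$ times, with a bounded multiplicative loss at each iteration, upgrades tightness to an $\exp(-c(\log T)^2/\log\log T)$ tail. A union bound over the $M$ annuli then yields \eqref{eqn-dist-tail-upper}.

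The hard part is the bootstrap converting the qualitative tightness of Axiom~\ref{item-metric-coord} into quantitative Gaussian-type tails; the lower-bound direction benefits from the ``many disjoint independent trials'' structure (giving the clean $(\log S)^2$), while the upper-bound direction has no analogous multiplication of small probabilities and must proceed by nested scale iteration, which is what introduces the $\log\log S$ loss.
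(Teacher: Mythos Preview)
Your approach is genuinely different from the paper's, and the paper even acknowledges the route you sketch: it remarks that Lemma~\ref{lem-dist-tail} ``can be proven directly from Definition~\ref{def-metric} using a percolation-style argument based on the white noise decomposition of the GFF,'' but then explicitly opts not to do this. Instead, the paper imports the corresponding LFPP estimate (Lemma~\ref{lem-dist-ep}, which is essentially a citation of~\cite[Lemma~4.11]{dg-supercritical-lfpp} combined with~\cite[Lemma~3.6]{dg-polylog}), passes to the limit along a subsequence using the Skorokhod coupling and the lower-semicontinuous topology (Lemma~\ref{lem-dist-tail-ssl}), and finally transfers the result to an arbitrary weak LQG metric via the bi-Lipschitz equivalence of~\cite[Theorem~1.10]{dg-polylog}. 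So the paper never proves either tail bound from the axioms at all; it inherits them from prior work on LFPP.

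That said, your direct-from-axioms sketch has a real gap in the lower-bound half. You propose $N\asymp\log S$ disjoint annuli $A_1,\dots,A_N$ separating $rK_1$ from $rK_2$ ``at a range of scales $r_k$.'' But the gap between $rK_1$ and $rK_2$ is of a single fixed scale $\asymp r$; nested separating annuli at dyadic sub-scales $r2^{-k}$ have typical crossing distance $\asymp (r2^{-k})^{\xi Q}e^{\xi h_{r2^{-k}}(z)}$, which decays geometrically in $k$ and is driven by a common Brownian motion $t\mapsto h_{e^{-t}}(z)$, so the summands are neither comparable nor decorrelated in the way your ``$N$ independent trials'' picture needs. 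The spatial Markov property does not give independence of $D_h(\text{across }A_k)$ across nested annuli without conditioning on boundary data, which reintroduces correlations. The actual percolation argument the paper alludes to (and that is carried out in~\cite{dg-supercritical-lfpp}) rests on a \emph{white noise decomposition} of the field into independent scale layers and an Efron--Stein/concentration step, not on a geometric chain of annuli; you would need to invoke that machinery explicitly. Your upper-bound bootstrap is more plausible in spirit (and indeed the $\log\log S$ loss in~\cite{dg-supercritical-lfpp} arises from such an iteration), but as written it is too schematic to constitute a proof.
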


Lemma~\ref{lem-dist-tail} is similar to~\cite[Proposition 1.8]{pfeffer-supercritical-lqg}, but the latter proposition only gives a superpolynomial tail bound, rather than a lognormal tail bound. We need the lognormal tail bound since we will be working with $\xi = \xi_\crit$, so second-order corrections are important and our estimates need to be sharper than in the case when $\xi\not=\xi_\crit$. 

It is easy to see from Lemma~\ref{lem-dist-tail} that if $A$ is a fixed Euclidean annulus, then the bounds~\eqref{eqn-dist-tail-lower} and~\eqref{eqn-dist-tail-upper} also hold with $D_h(\text{across $r A$})$ and $D_h(\text{around $r A$})$ in place of $D_h(rK_1,rK_2)$ and  $D_h(rK_1,r K_2 ; rU)$ (recall Definition~\ref{def-around-across}). 

Lemma~\ref{lem-dist-tail} can be proven directly from Definition~\ref{def-metric} using a percolation-style argument based on the white noise decomposition of the GFF. But, for convenience we will instead deduce the lemma from the following LFPP estimate, which is an easy consequence of results from~\cite{dg-supercritical-lfpp,dg-polylog}. 

\begin{lem} \label{lem-dist-ep}
Let $\{\frk a_\ep^{-1} D_h^\ep\}_{\ep > 0}$ be the LFPP metrics as in~\eqref{eqn-lfpp} and~\eqref{eqn-gff-constant}. 
Let $U\subset \BB C$ be a connected open set and let $K_1,K_2\subset U$ be disjoint compact connected sets which are not singletons.
There are constants $c_0,c_1 > 0$ depending on $U,K_1,K_2$ such that the following is true. 
For each $r > 0$, each $\ep \in (0,r]$, and each $S>3$, 
\eqb \label{eqn-dist-ep-lower'}
\BB P\left[ \frk a_\ep^{-1} D_h^\ep(rK_1, r K_2  ) < S^{-1}  r^{\xi Q}   e^{\xi h_r(0)}   \right] \leq c_0 e^{-c_1 (\log S)^2} 
\eqe 
and
\eqb \label{eqn-dist-ep-upper'}
\BB P\left[ \frk a_\ep^{-1} D_h^\ep(rK_1, r K_2 ; r U) > S r^{\xi Q} e^{\xi h_r(0)}     \right] \leq c_0 e^{-c_1 (\log S)^2 /  \log\log S }  .
\eqe   
\end{lem}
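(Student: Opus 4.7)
The plan is to use the exact scaling properties of LFPP and the whole-plane GFF to reduce to the unit-scale case $r=1$, and then appeal to the lognormal tail bounds already present in~\cite{dg-supercritical-lfpp,dg-polylog}. A change of variables $P = rQ$ in~\eqref{eqn-lfpp}, together with the identity $h_\ep^*(rz) = (h(r\cdot))^*_{\ep/r}(z)$ (immediate from a change of variables in the heat-kernel convolution), gives $D_h^\ep(rz,rw) = r\, D_{h(r\cdot)}^{\ep/r}(z,w)$, and similarly for the internal metric on $rU$. Combining with the exact Weyl scaling $D_{h+c}^\ep = e^{\xi c} D_h^\ep$ (immediate from~\eqref{eqn-lfpp}) and the scale invariance $\tilde h := h(r\cdot) - h_r(0) \eqD h$ of the whole-plane GFF jointly with the Gaussian $h_r(0)$,
\begin{equation*}
\frk a_\ep^{-1} D_h^\ep(rK_1, rK_2) \;=\; \frac{r\,\frk a_{\ep/r}}{\frk a_\ep}\; e^{\xi h_r(0)}\; \bigl(\frk a_{\ep/r}^{-1} D_{\tilde h}^{\ep/r}(K_1, K_2)\bigr),
\end{equation*}
and likewise for the $rU$-internal metric. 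By~\cite[Theorem 1.9 and Theorem 1.11]{dg-polylog} (the former identifying the scaling constant $\frk c_r$ with $r^{\xi Q}$, the latter giving polylogarithmic sharpness of $\frk a_\ep$), the deterministic prefactor satisfies $r\,\frk a_{\ep/r}/\frk a_\ep = r^{\xi Q}\, A(r,\ep)$ with $A(r,\ep)$ polylogarithmic in $r \vee (r/\ep) \vee 2$.

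Since $\tilde h \eqD h$ and $\ep/r \in (0,1]$, the lemma reduces to the unit-scale statement: for every $\delta \in (0,1]$ and every $S>3$,
\begin{align*}
\BB P\bigl[\frk a_\delta^{-1} D_h^\delta(K_1,K_2) < S^{-1}\bigr] &\leq c_0\, e^{-c_1 (\log S)^2}, \\
\BB P\bigl[\frk a_\delta^{-1} D_h^\delta(K_1,K_2;U) > S\bigr] &\leq c_0\, e^{-c_1 (\log S)^2/\log\log S}.
\end{align*}
These lognormal tails already appear in the LFPP tightness proof of~\cite{dg-supercritical-lfpp}. Both come from applying the Borell--TIS inequality to $\log D_h^\delta$, viewed as a Lipschitz functional (in Cameron--Martin norm) of a truncated white-noise decomposition of $h$. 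The lower tail requires no truncation; the upper tail is handled by truncating the white noise at scale $j_\ast \asymp \log S / \log\log S$, and the bias introduced by this truncation is the source of the $\log\log S$ factor.

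To finish, substitute $S \mapsto S \cdot A(r,\ep)^{\mp 1}$ into the two unit-scale bounds. Since $\log A(r,\ep) = O(\log\log(r\vee r/\ep \vee 2))$ is dominated by $(\log S)^2/\log\log S$ for large $S$, the polylog correction is absorbed into a smaller constant $c_1$ (with $c_0$ possibly increased to cover moderate $S$), yielding~\eqref{eqn-dist-ep-lower'}--\eqref{eqn-dist-ep-upper'}. The main obstacle is the unit-scale upper tail: the Cameron--Martin Lipschitz constant of $\log D_h^\delta$ grows with the number of white-noise scales, so Borell--TIS cannot be applied globally. The truncation/bias argument that handles this is the heart of~\cite{dg-supercritical-lfpp}, and once it is taken as input the present lemma follows mechanically from the scaling identity above; this is the sense in which it is an ``easy consequence'' of the cited works.
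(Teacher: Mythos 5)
Your scaling reduction ($D_h^\ep(rz,rw)=r\,D_{h(r\cdot)}^{\ep/r}(z,w)$, exact Weyl scaling, scale-invariance of the whole-plane GFF modulo additive constant) is correct and is the mechanism underlying the $r$-dependence of the bound, but it is already packaged inside~\cite[Lemma~4.11]{dg-supercritical-lfpp}, which is the first thing the paper cites: that lemma directly gives the lognormal tails with the normalizing factor $\frac{r\,\frk a_{\ep/r}}{\frk a_\ep}\,e^{\xi h_r(0)}$, so no reduction to $r=1$ and no separate unit-scale statement is needed. The paper's proof then consists of a single additional citation, $\cite[\text{Lemma~3.6}]{dg-polylog}$ (applied with $\frk c_r = r^{\xi Q}$), which asserts that the deterministic ratio $\frac{r\,\frk a_{\ep/r}}{r^{\xi Q}\,\frk a_\ep}$ lies in a fixed interval $[C^{-1},C]$ uniformly; this bounded multiplicative correction is absorbed directly into $c_0,c_1$.

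The genuine gap in your argument is the step where you replace that citation with~\cite[Theorems~1.9 and~1.11]{dg-polylog} and try to absorb a polylogarithmic prefactor $A(r,\ep)$ by the substitution $S\mapsto S\cdot A(r,\ep)^{\mp 1}$. Polylogarithmic sharpness of $\frk a_\ep$ only tells you that $A(r,\ep)$ is polylog in the relevant scales, and you concede $\log A(r,\ep)=O(\log\log(r\vee r/\ep\vee 2))$; this is \emph{not} uniformly bounded as $(r,\ep)$ ranges over all admissible pairs. But the constants $c_0,c_1$ in the lemma depend only on $U,K_1,K_2$, and the estimate must hold for every $S>3$. If $A(r,\ep)$ can be made arbitrarily large, then for any fixed $c_0,c_1$ you can choose $(r,\ep)$ so that $A(r,\ep)>S$, at which point $\log(S/A(r,\ep))<0$ and the substituted upper-tail bound gives nothing; a uniform absorption into the constants is impossible. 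Put differently, ``$\log A$ is dominated by $(\log S)^2/\log\log S$ for large $S$'' holds at fixed $(r,\ep)$, but how large $S$ must be depends on $(r,\ep)$, which is exactly what is not allowed. The ingredient that closes this gap is the \emph{uniform} two-sided bound $C^{-1}\le r\,\frk a_{\ep/r}/(r^{\xi Q}\frk a_\ep)\le C$ from~\cite[Lemma~3.6]{dg-polylog}; with that in hand the absorption is trivial and no asymptotics in $S$ are needed.

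Two minor additional points: (i) the lognormal unit-scale tails you invoke (Borell--TIS on a truncated white-noise decomposition, truncation scale $j_*\asymp \log S/\log\log S$ for the upper tail) are a reasonable description of the mechanism in~\cite{dg-supercritical-lfpp}, but the present lemma neither needs nor reproves them — it cites the already-scaled statement in Lemma~4.11 there; (ii) your explicit verification of the heat-kernel scaling identity $(h*p_{\ep^2/2})(rz)=(h(r\cdot))^*_{\ep/r}(z)$ is correct and is a nice sanity check, but is not a new step relative to the cited literature.
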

\begin{proof}
By~\cite[Lemma 4.11]{dg-supercritical-lfpp}, there are constants $c_0 , c_1 > 0$ as in the lemma statement such that for each $r > 0$, each $\ep \in (0,1)$, and each $S>3$, 
\eqb \label{eqn-dist-ep-lower}
\BB P\left[ \frk a_\ep^{-1} D_h^\ep(rK_1, r K_2  ) < S^{-1}   e^{\xi h_r(0)} \frac{r \frk a_{\ep/r}}{\frk a_\ep}   \right] \leq c_0 e^{-c_1 (\log S)^2} 
\eqe 
and
\eqb \label{eqn-dist-ep-upper}
\BB P\left[ \frk a_\ep^{-1} D_h^\ep(rK_1, r K_2 ; r U) > S e^{\xi h_r(0)}  \frac{r \frk a_{\ep/r}}{\frk a_\ep} \right] \leq c_0 e^{-c_1 (\log S)^2 /  \log\log S }  .
\eqe 
By~\cite[Lemma 3.6]{dg-polylog} (with $\frk c_r = r^{\xi Q}$), there is a constant $C>1$ such that for each small enough $\ep > 0$ (depending on $r$),
\eqb \label{eqn-dist-ratio}
C^{-1} \leq \frac{r \frk a_{\ep/r}}{r^{\xi Q} \frk a_\ep} \leq C .
\eqe 
Plugging~\eqref{eqn-dist-ratio} into~\eqref{eqn-dist-ep-lower} and~\eqref{eqn-dist-ep-upper} gives~\eqref{eqn-dist-ep-lower'} and~\eqref{eqn-dist-ep-upper'}, after possibly increasing $c_0$ and/or decreasing $c_1$.
\end{proof}

We now take a limit as $\ep\rta 0$ in Lemma~\ref{lem-dist-ep} to get the following lemma.

\begin{lem} \label{lem-dist-tail-ssl}
The statement of Lemma~\ref{lem-dist-tail} holds if $D_h$ is a subsequential limit of the re-scaled LFPP metrics $\{\frk a_\ep^{-1} D_h^\ep\}$. 
\end{lem}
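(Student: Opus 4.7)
The plan is to pass to the limit in Lemma~\ref{lem-dist-ep} along the subsequence $\ep_n \to 0$ provided by Theorem~\ref{thm-lfpp-axioms}, along which $\frk a_{\ep_n}^{-1} D_h^{\ep_n} \to D_h$ in probability with respect to the topology on lower semicontinuous functions. By the Skorokhod representation theorem, I may assume this convergence is almost sure. The key probabilistic observation I would use throughout is the set-theoretic Fatou inequality
\begin{equation*}
\P[A_n \text{ eventually}] \leq \liminf_{n \to \infty} \P[A_n],
\end{equation*}
which follows by writing $\{A_n \text{ eventually}\} = \bigcup_N \bigcap_{n \geq N} A_n$, applying monotone convergence, and using $\P[\bigcap_{n \geq N} A_n] \leq \inf_{n \geq N} \P[A_n]$.

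For the upper bound~\eqref{eqn-dist-tail-upper}, the first step is the deterministic statement that under LSC convergence of length metrics $f_n \to f$ (Definition~\ref{def-lsc}), the internal metric is itself lower semicontinuous: $f(K_1, K_2; U) \leq \liminf_n f_n(K_1, K_2; U)$. This follows from property (i) by taking near-minimizing paths for $f_n$ inside $U$, extracting a subsequential limit via compactness and an Arzel\`a-Ascoli-type argument, and bounding the $f$-length of the limit path using property (i) on consecutive pairs in each partition. Under the Skorokhod coupling this yields
\begin{equation*}
\{D_h(rK_1, rK_2; rU) > c\} \subseteq \{\frk a_{\ep_n}^{-1} D_h^{\ep_n}(rK_1, rK_2; rU) > c \text{ eventually}\},
\end{equation*}
since $D_h > c$ forces $\liminf_n \frk a_{\ep_n}^{-1} D_h^{\ep_n} \geq D_h > c$, hence $\frk a_{\ep_n}^{-1} D_h^{\ep_n} > c$ eventually. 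Applying the key inequality above together with~\eqref{eqn-dist-ep-upper'} at $c = S r^{\xi Q} e^{\xi h_r(0)}$ gives~\eqref{eqn-dist-tail-upper}, up to a possible adjustment of the constants $c_0, c_1$.

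For the lower bound~\eqref{eqn-dist-tail-lower}, LSC convergence only yields $D_h(rK_1, rK_2) \leq \liminf_n \frk a_{\ep_n}^{-1} D_h^{\ep_n}(rK_1, rK_2)$, which goes the wrong way. I would instead invoke property (ii) of Definition~\ref{def-lsc}: fix $\delta > 0$ small enough that the closed Euclidean $\delta$-neighborhoods $K_1^\delta, K_2^\delta$ remain disjoint. On the event $\{D_h(rK_1, rK_2) < c\}$, the lower semicontinuity of $D_h$ on the compact set $rK_1 \times rK_2$ provides a minimizer $(z^*, w^*)$, and property (ii) furnishes $(z_n, w_n) \to (z^*, w^*)$ with $\frk a_{\ep_n}^{-1} D_h^{\ep_n}(z_n, w_n) \to D_h(z^*, w^*) < c$. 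Eventually $(z_n, w_n) \in rK_1^\delta \times rK_2^\delta$ and $\frk a_{\ep_n}^{-1} D_h^{\ep_n}(z_n, w_n) < c$, so $\frk a_{\ep_n}^{-1} D_h^{\ep_n}(rK_1^\delta, rK_2^\delta) < c$ eventually. The key inequality together with~\eqref{eqn-dist-ep-lower'} applied to $K_1^\delta, K_2^\delta$ (with constants depending on $K_1, K_2, U$ through $\delta$) then yields~\eqref{eqn-dist-tail-lower}. The main technical point is the deterministic LSC statement for the internal metric required in the upper bound; for the lower bound, the trick of enlarging to $K_i^\delta$ neatly sidesteps any analogous issue, so no further LFPP input is needed beyond Lemma~\ref{lem-dist-ep} itself.
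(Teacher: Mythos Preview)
Your treatment of the lower bound~\eqref{eqn-dist-tail-lower} is essentially the paper's argument: enlarge $K_i$ to $K_i^\delta$ (the paper uses $K_i'$), pick a minimizing pair for $D_h$ on $rK_1\times rK_2$, and use property~(ii) of Definition~\ref{def-lsc} to produce approximating pairs that eventually lie in the enlarged sets. This is fine.

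The gap is in the upper bound~\eqref{eqn-dist-tail-upper}. You assert as a ``deterministic statement'' that for length metrics $f_n\to f$ in the LSC topology one has $f(K_1,K_2;U)\leq\liminf_n f_n(K_1,K_2;U)$, and you justify it by extracting a limit of near-minimizing $f_n$-paths via an Arzel\`a--Ascoli-type argument. Two problems: first, Arzel\`a--Ascoli requires equicontinuity of the paths $P_n$ with respect to the Euclidean metric, i.e.\ a uniform-in-$n$ bound of the form $|x-y|\leq\phi(f_n(x,y))$, and nothing in Definition~\ref{def-lsc} provides this. Second, even if a limit curve $P$ exists, it could hit $\partial U$, so its $f$-length does not bound $f(K_1,K_2;U)$. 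The paper explicitly flags this difficulty (``we do not know that $\frk a_\ep^{-1} D_h^\ep(\cdot,\cdot;rU)\to D_h(\cdot,\cdot;rU)$ with respect to the metric on lower semicontinuous functions'').

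The paper's fix is not purely deterministic: it passes to a compact $U'\Subset U$, applies~\eqref{eqn-dist-ep-upper'} with $U'$ in place of $U$, and then, instead of Arzel\`a--Ascoli, extracts subsequential limits of \emph{finitely many} points $P^\ep(t_0),\dots,P^\ep(t_N)$ along the LFPP paths (using only compactness of $r\overline{U'}$). Property~(i) gives $D_h(u_{n-1},u_n)\leq t_n-t_{n-1}$. The crucial extra input is that a.s.\ $D_h(rU',r\partial U)>0$; choosing the partition so that $t_n-t_{n-1}<\tfrac12 D_h(rU',r\partial U)$ forces $D_h(u_{n-1},u_n)=D_h(u_{n-1},u_n;rU)$, and summing yields the internal bound. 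You need this LQG-specific positivity statement (or some substitute) to close the argument.
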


We note that Lemma~\ref{lem-dist-tail-ssl} does not immediately imply Lemma~\ref{lem-dist-tail} since there could in principle be metrics satisfying the axioms of Definition~\ref{def-metric} which do not arise as subsequential limits of LFPP.

\begin{proof}[Proof of Lemma~\ref{lem-dist-tail-ssl}]
By assumption, there is a sequence $\mcl E$ of $\ep$-values tending to zero such that $\frk a_\ep^{-1} D_h^\ep \rta D_h$ in law along $\mcl E$ w.r.t.\ the topology of Definition~\ref{def-lsc}. By the Skorokhod representation theorem, we can couple the metrics $\{\frk a_\ep^{-1} D_h^\ep\}_{\ep > 0}$ with $D_h$ so that the convergence occurs a.s. (note that in this coupling the metrics are not necessarily all defined w.r.t.\ the same GFF instance). 

We first deduce~\eqref{eqn-dist-tail-lower} from~\eqref{eqn-dist-ep-lower'}. 
Let  $K_1' , K_2' \subset U$ be disjoint connected compact sets such that for each $i\in \{1,2\}$, the set $K_i$ is contained in the interior of $K_i'$. By Definition~\ref{def-lsc}, for each $u \in r K_1$ and each $v\in r K_2$, there exists a sequence of pairs of points $(u_\ep , v_\ep)_{\ep\in\mcl E}$ such that along $\mcl E$, we have the convergence $|u_\ep - u| \rta 0$, $|v_\ep - v| \rta 0$, and $\frk a_\ep^{-1} D_h^\ep(u_\ep ,v_\ep) \rta D_h(u,v)$. 
For each small enough $\ep >0$, we have $u_\ep \in r K_1'$ and $v_\ep \in r K_2'$. Therefore, a.s.\ 
\eqbn
D_h(u,v) \geq \limsup_{\ep\rta 0} \frk a_\ep^{-1} D_h^\ep(rK_1' , r K_2'  ) ,\quad \forall u \in r K_1, \quad \forall v \in r K_2 .
\eqen
From this and~\eqref{eqn-dist-ep-lower'} with $K_1',K_2'$ in place of $K_1,K_2$, we obtain~\eqref{eqn-dist-tail-lower}. 

We next deduce~\eqref{eqn-dist-tail-upper} from~\eqref{eqn-dist-ep-upper'}. The proof is slightly more involved than one might initially expect since we do not know that $\frk a_\ep^{-1} D_h^\ep(\cdot,\cdot ; r U)  \rta D_h(\cdot,\cdot ; rU)$ with respect to the metric on lower semicontinuous functions, so one needs to find pairs of points $u,v$ for which $D_h(u,v;U) = D_h(u,v)$.  

Let $U'\subset U$ be a an open set such that $\ol U'$ is a compact subset of $U$ and $K_1,K_2\subset U'$.
By~\eqref{eqn-dist-ep-upper'} with $U'$ in place of $U$, we can find constants $c_0,c_1 > 0$ as in the lemma statement such that for each $S>3$, it holds with probability at least $1-c_0 e^{-c_1 (\log S)^2 / \log\log S}$ that there is a subsequence $\mcl E'\subset \mcl E$ such that 
\eqbn
\frk a_\ep^{-1} D_h^\ep(rK_1, r K_2 ; r U') \leq S e^{\xi h_r(0)}  r^{\xi Q} ,\quad\forall \ep \in \mcl E'.
\eqen
Henceforth assume that such a subsequence $\mcl E'$ exists, which happens with probability at least $1-c_0 e^{-c_1 (\log S)^2 / \log\log S}$. We will establish an upper bound for $D_h(rK_1, r K_2 ; r U)$.

For $\ep\in\mcl E'$, let $P^\ep : [0,T^\ep] \rta r U'$ be a path in $r U'$ from $r K_1$ to $r K_2$ with $\frk a_\ep^{-1} D_h^\ep$-length $T^\ep \leq 2 S e^{\xi h_r(0)} r^{\xi Q}  $, parametrized by its $\frk a_\ep^{-1} D_h^\ep$-length. We extend the definition of $P^\ep$ to $[0,\infty)$ by setting $P^\ep(t) = P^\ep(T^\ep)$ for $t\geq T^\ep$. 

Almost surely, we have $D_h(r U' , r \bdy U) > 0$, so a.s.\ we can find a partition $0  =t_0 < \dots < t_N = 2 S r^{\xi Q} e^{\xi h_r(0)}$ such that 
\eqb \label{eqn-dist-tail-partition}
\sup_{n\in [1,N]_{\BB Z}} (t_n - t_{n-1}) \leq \frac12 D_h(r U' , r \bdy U) . 
\eqe 
Since $r \ol U'$ is compact, we can a.s.\ find a subsequence $\mcl E'' \subset \mcl E'$ and points $u_n \in r \ol U'$ for $n\in[0,N]_{\BB Z}$ such that for each $n\in[0,N]_{\BB Z}$, we have $|P^\ep(t_n) - u_n| \rta 0$ as $\ep\rta 0$ along $\mcl E''$. Then $u_0 \in r K_1$, $u_N \in r K_2$, and by Definition~\ref{def-lsc},  
\eqb \label{eqn-dist-tail-inc}
D_h(u_{n-1} , u_n) 
\leq \liminf_{\mcl E'' \ni \ep\rta 0} \frk a_\ep^{-1} D_h^\ep(P^\ep(t_{n-1}) , P^\ep(t_n))  
\leq  t_n - t_{n-1}   .
\eqe 
By~\eqref{eqn-dist-tail-partition} and~\eqref{eqn-dist-tail-inc}, for each $n \in [1,N]_{\BB Z}$, each path from $u_{n-1}$ to $u_n$ of near-minimal $D_h$-length is contained in $r U$, so $D_h(u_{n-1} , u_n) = D_h(u_{n-1} , u_n ; r U)$. 
By this, the triangle inequality, and~\eqref{eqn-dist-tail-inc}, 
\eqbn
D_h(rK_1, r K_2 ; r U)  \leq \sum_{n=1}^N D_h(u_{n-1} , u_n ; r U) \leq \sum_{n=1}^N (t_n - t_{n-1}) = t_N - t_0 = 2 S r^{\xi Q} e^{\xi h_r(0)} . 
\eqen
This gives~\eqref{eqn-dist-tail-upper} with $2S$ in place of $S$, which is sufficient. 
\end{proof}

\begin{proof}[Proof of Lemma~\ref{lem-dist-tail}]
Suppose that $D_h$ is an arbitrary weak LQG metric and let $\wt D_h$ be a weak LQG metric which is a subsequential limit of LFPP (such a weak LQG metric exists by Theorem~\ref{thm-lfpp-axioms}). 
By~\cite[Theorem 1.10]{dg-polylog}, there is a deterministic constant $C>1$ such that a.s.\ 
\eqbn
C^{-1} \wt D_h(z,w) \leq D_h(z,w) \leq C \wt D_h(z,w) ,\quad \forall z,w\in\BB C .
\eqen
Combining this with Lemma~\ref{lem-dist-tail-ssl} concludes the proof. 
\end{proof}

\subsection{Maximum of the GFF}
\label{sec-gff-max}

A key input in our proof of Theorem~\ref{thm-holder} is the following estimate for the maximum of the GFF.

\begin{prop} \label{prop-gff-max} 
Let $U\subset \BB C$ be a bounded open set, let $\alpha \in (0,1/4)$, and let $k\in\BB N$. Almost surely, there is a random $C>0$ such that
\eqb \label{eqn-gff-max}
|h_{e^{-n}}(z)| \leq 2 n - \alpha  \log n + C ,\quad \forall n\in\BB N, \quad \forall z \in (e^{-n-k} \BB Z^2)  \cap U.
\eqe 
\end{prop}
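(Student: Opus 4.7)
The plan is to deduce Proposition~\ref{prop-gff-max} from a sharp exponential right-tail bound for the centered maximum of the circle average process, combined with the Borel--Cantelli lemma and the symmetry $h \eqD -h$ of the whole-plane GFF.

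The key ingredient I would invoke from~\cite{drz-centered-max} is that for the bounded open set $U$ and fixed $k \in \BB N$, there exist constants $c_0 \geq 2$ and $C_0 > 0$ such that for every $n \in \BB N$ and every $\lambda > 0$,
\[
\BB P\!\left[\max_{z \in (e^{-n-k}\BB Z^2) \cap U} h_{e^{-n}}(z) > 2n - \tfrac{3}{4}\log n + \lambda\right] \leq C_0\, e^{-c_0 \lambda}.
\]
This is much sharper than what a naive union bound over the $\asymp e^{2(n+k)}$ lattice points would give from the pointwise Gaussian tail of $h_{e^{-n}}(z)$ (which has variance $n+O(1)$); the strength of the bound relies on the multiscale/branching structure of the log-correlated field.

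Given $\alpha \in (0,1/4)$, I would substitute $\lambda = (\tfrac{3}{4}-\alpha)\log n + M$ to convert the above estimate into
\[
\BB P\!\left[\max_{z \in (e^{-n-k}\BB Z^2) \cap U} h_{e^{-n}}(z) > 2n - \alpha \log n + M\right] \leq C_0\, e^{-c_0 M}\, n^{-c_0(3/4 - \alpha)}.
\]
Since $\tfrac{3}{4} - \alpha > \tfrac{1}{2}$ and $c_0 \geq 2$, we have $c_0(\tfrac{3}{4}-\alpha) > 1$, so these probabilities are summable in $n$. The Borel--Cantelli lemma then produces a random $C_1 \in (0,\infty)$ such that the one-sided bound $\max_z h_{e^{-n}}(z) \leq 2n - \alpha\log n + C_1$ holds for every $n \in \BB N$ (the finitely many exceptional $n$ are absorbed into the constant). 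Applying the same argument to $-h$, which has the same law as $h$ because the zero-unit-circle-average normalization is preserved under sign flip, yields the matching lower bound; taking the larger of the two random constants gives the two-sided estimate.

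The main obstacle is pinning down the precise exponential rate $c_0$ in the right-tail estimate, since $\alpha = 1/4$ is exactly the critical Borel--Cantelli threshold corresponding to $c_0 = 2$, which is the known Gumbel-type rate for the right tail of the centered maximum of the 2D log-correlated Gaussian field. Once that input from~\cite{drz-centered-max} is in hand, the rest is a clean summability computation together with the symmetry of $h$, and no further probabilistic input about the GFF is needed.
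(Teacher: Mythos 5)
Your proposal is correct and takes essentially the same route as the paper: apply a sharp right-tail bound for the centered maximum of the circle-average field from~\cite{drz-centered-max}, substitute $\lambda = (\tfrac34-\alpha)\log n + M$, sum via Borel--Cantelli, and obtain the two-sided estimate by symmetry. The one place you are treating as a black box is the tail bound itself: Proposition~1.1 of~\cite{drz-centered-max} applies to centered Gaussian fields $\psi_N$ on $(NU)\cap\BB Z^2$ satisfying precise variance and increment hypotheses, and the whole-plane GFF circle average at spacing $e^{-n-k}$ does not satisfy these off the shelf --- the circles overlap (so the needed covariance formulas fail for nearest neighbours), and the whole-plane Green's function $G(z,w)=\log\tfrac{\max\{|z|,1\}\max\{|w|,1\}}{|z-w|}$ is not of the required form near $\bdy\BB D$. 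The paper's Lemma~\ref{lem-one-scale-max} handles this by first passing to a zero-boundary GFF $\rng h$ on a fixed $V\supset\ol U$ (so that $\op{Var}\,\rng h_{e^{-n}}(z)=n+\log\op{CR}(z;V)$ and $\op{Cov}=\op{Gr}_V$), working at lattice spacing $K e^{-n-k}$ with $K\asymp 2e^k$ so the circles are disjoint, then taking a union bound over the $O(e^{2k})$ shifts, and finally transferring from $\rng h$ to $h$ via the Markov decomposition $h|_V=\rng h+\frk h$ with $\frk h$ harmonic and hence bounded on $\ol U$. Once you insert these verifications, your argument coincides with the paper's, with the $h\eqD -h$ symmetry playing the role of the absolute value in~\eqref{eqn-one-scale-max}.
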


For the proof of Proposition~\ref{prop-optimality}, we also need a lower bound for the maximum of the circle average process.

\begin{prop} \label{prop-gff-tight} 
Let $U\subset \BB C$ be a bounded open and let $k\in\BB N$. Almost surely, the random variables
\eqb \label{eqn-gff-tight}
\max_{z\in ( e^{-n-k}\BB Z^2)\cap U} h_{e^{-n}}(z) - \left( 2 n - \frac34 \log n \right) 
\eqe 
for $n\in\BB N$ are tight. In particular, for each $\alpha' > 3/4$, a.s.\ there exist infinitely many values of $n\in\BB N$ such that
\eqb \label{eqn-gff-lower}
\max_{z\in (e^{-n-k}\BB Z^2)\cap U} h_{e^{-n}}(z)  \geq 2n - \alpha' \log n. 
\eqe
\end{prop}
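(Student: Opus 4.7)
The plan is to derive~\eqref{eqn-gff-tight} by invoking the centered-maximum results for log-correlated Gaussian fields from~\cite{drz-centered-max}, and then to deduce~\eqref{eqn-gff-lower} directly from the tightness via a measure-continuity argument (so no independence-across-scales argument is needed).

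First I would verify that $\{h_{e^{-n}}(z)\}_{z \in (e^{-n-k}\BB Z^2) \cap U}$ is, for each $n$, a discrete log-correlated Gaussian field of the kind treated in~\cite{drz-centered-max}. A direct computation from the whole-plane GFF covariance $G(z,w)$ together with the standard circle-average identity gives $\op{Var} h_{e^{-n}}(z) = n + O(1)$ and $\op{Cov}(h_{e^{-n}}(z), h_{e^{-n}}(w)) = \log(1/|z-w|) + O(1)$ whenever $|z - w| \geq e^{-n}$, with $O(1)$ errors uniform over $z, w$ in the bounded set $U$. After rescaling space by $e^n$, this matches the hypotheses of the main theorem of~\cite{drz-centered-max}, which then yields tightness of $\max_z h_{e^{-n}}(z) - (2n - \tfrac{3}{4}\log n)$, proving~\eqref{eqn-gff-tight}.

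For~\eqref{eqn-gff-lower}, let $A_n$ denote the event appearing there. Tightness of the centered maximum yields, for every $\epsilon > 0$, a constant $M = M(\epsilon)$ such that $\BB P[\max_z h_{e^{-n}}(z) \geq 2n - \tfrac{3}{4}\log n - M] \geq 1 - \epsilon$ for all $n$. Since $\alpha' > 3/4$, we have $2n - \alpha'\log n \leq 2n - \tfrac{3}{4}\log n - M$ for $n$ large enough, and hence $\BB P[A_n^c] \leq \epsilon$ for all sufficiently large $n$, say for $n \geq N_0(\epsilon)$. The events $\bigcap_{n \geq N} A_n^c$ are increasing in $N$, so by monotone convergence together with the bound $\BB P[\bigcap_{n \geq N} A_n^c] \leq \BB P[A_N^c] \leq \epsilon$ for all $N \geq N_0$,
\eqbn
\BB P\left[\bigcup_{N} \bigcap_{n \geq N} A_n^c\right] = \lim_N \BB P\left[\bigcap_{n \geq N} A_n^c\right] \leq \epsilon.
\eqen
Since $\epsilon$ is arbitrary, this probability equals zero, so a.s.\ $A_n$ holds for infinitely many $n$.

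The main technical work lies in the first step --- carefully verifying the precise log-correlated-field hypotheses of~\cite{drz-centered-max} against the discrete circle-average field of the whole-plane GFF (including the mild but nontrivial issue that the covariance degenerates at distance $\leq e^{-n}$); once that is in place, the tightness statement and its consequence follow routinely.
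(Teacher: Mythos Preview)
Your proposal is correct and follows essentially the same approach as the paper: both reduce tightness to the centered-maximum results of~\cite{drz-centered-max} and then derive the infinitely-often statement from $\BB P[A_n]\to 1$. The paper's proof differs only in implementation details you already flag as the main technical work---it passes through a zero-boundary GFF on a larger domain (transferring back via the Markov decomposition $h=\rng h+\frk h$) and resolves the overlapping-circles issue by thinning to a coarser lattice $e^{-n-k}K\BB Z^2$ with $K\approx 2e^k$, applying~\cite{drz-centered-max} there, and recovering the full lattice via a union bound over the $O(e^{2k})$ translates.
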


We will deduce Propositions~\ref{prop-gff-max} and~\ref{prop-gff-tight} from the following estimate for a single value of $n$, with a zero-boundary GFF instead of a whole-plane GFF. 
 
\begin{lem} \label{lem-one-scale-max}
Let $U\subset\BB C$ be a bounded open set, let $V\subset\BB C$ be a bounded, simply connected open set which contains $\ol U$, and let $\rng h$ be the zero-boundary GFF on $V$.
For each $n,k\in\BB N$ and each $S>1$, 
\eqb \label{eqn-one-scale-max}
\BB P\left[ \max_{z \in   (e^{-n - k}  \BB Z^2  ) \cap U } |\rng h_{e^{-n}}(z)| > 2 n - \frac34  \log n +S \right] \preceq   e^{2k} S e^{-2 S} ,
\eqe
with the implicit constant depending only on $U,V$. Furthermore, the random variables
\eqb \label{eqn-one-scale-tight}
\max_{z\in ( e^{-n-k}\BB Z^2)\cap U} \rng h_{e^{-n}}(z) - \left( 2 n - \frac34 \log n \right) 
\eqe
for $n\in\BB N$ are tight. 
\end{lem}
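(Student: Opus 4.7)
The plan is to reduce both statements to the case $k=0$---the natural lattice at spacing $e^{-n}$---and then invoke the results of~\cite{drz-centered-max} on the centered maximum of log-correlated Gaussian fields. For the reduction, I would write the fine lattice $(e^{-n-k}\BB Z^2) \cap U$ as a disjoint union of $e^{2k}$ translated copies of the coarse lattice, namely $\bigsqcup_v [(e^{-n}\BB Z^2 + v) \cap U]$, where $v$ ranges over the $e^{2k}$ points of $(e^{-n-k}\BB Z^2) \cap [0, e^{-n})^2$. Each such shifted copy is contained in an $e^{-n}$-spaced lattice $(e^{-n}\BB Z^2 + v) \cap U'$, where $U'$ is a slight enlargement of $U$ with $\ol{U'}$ still compactly contained in $V$ and $U'$ chosen independently of $n$. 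If the $k=0$ version of~\eqref{eqn-one-scale-max} holds on $U'$ with a constant uniform over shifts, a union bound over the $e^{2k}$ shifts yields the $e^{2k} S e^{-2S}$ bound in~\eqref{eqn-one-scale-max}. The same decomposition, combined with tightness of the $k=0$ centered maximum, yields~\eqref{eqn-one-scale-tight}, since $k$ is a fixed parameter.

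For the $k=0$ case, I would appeal to the results of~\cite{drz-centered-max}, which for log-correlated Gaussian fields of the relevant type---in particular the circle-average process $\{\rng h_{e^{-n}}(z)\}_{z \in U'}$ of the zero-boundary GFF on the bounded simply connected domain $V$---establish that the centered maximum over a natural $e^{-n}$-spaced lattice is tight around $m_n := 2n - \tfrac{3}{4}\log n$ and satisfies the right-tail bound $\BB P[\max > m_n + S] \leq C S e^{-2S}$ of the classical Bramson/Ding--Zeitouni form. To handle the absolute value in~\eqref{eqn-one-scale-max}, I would invoke the sign symmetry $\rng h \eqD -\rng h$ to write $\{\max_z |\rng h_{e^{-n}}(z)| > m_n + S\} \subset \{\max_z \rng h_{e^{-n}}(z) > m_n + S\} \cup \{\min_z \rng h_{e^{-n}}(z) < -m_n - S\}$, so symmetry costs at most a factor of $2$.

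Two small technicalities need verification. First, the covariance of $\rng h_{e^{-n}}$ should match the hypotheses of~\cite{drz-centered-max}: this follows from the Green's function estimate $G_V(z,w) = \log(1/|z-w|) + O(1)$ uniformly on compact subsets of $V \times V$, so the circle averages have covariance $\log(1/\max(|z-w|, e^{-n})) + O(1)$ uniformly on $U'$, matching the standard log-correlated form. Second, the tail constants must be uniform over the shift $v$ (equivalently, over translates of the natural lattice inside $U'$), which is immediate from translation invariance of $e^{-n}\BB Z^2$ and the boundedness of $U'$. The main obstacle is extracting the tail bound in the sharp form $C S e^{-2S}$ rather than a weaker form such as $C_\eta e^{-(2-\eta) S}$, since the exponent $2$ is crucial for establishing the $\alpha_* \geq 1/4$ bound in Remark~\ref{remark-optimal-exponent} (and hence for Proposition~\ref{prop-gff-max} with $\alpha$ up to $1/4$). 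This sharp Bramson/Ding--Zeitouni tail for log-correlated Gaussian fields is exactly what~\cite{drz-centered-max} is designed to provide, so the lemma should follow without further difficulty.
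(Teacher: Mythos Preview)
Your proposal is correct and follows essentially the same strategy as the paper: decompose the fine lattice into $O(e^{2k})$ shifted coarse sublattices, verify the hypotheses of the results from~\cite{drz-centered-max} for each sublattice, and take a union bound over the shifts. The only difference is that the paper thins to spacing $\approx 2e^{-n}$ rather than $e^{-n}$ (taking $K$ the least integer $\geq 2e^{k}$) so that the circles $\bdy B_z(e^{-n})$ on the coarse sublattice are disjoint and the covariance formula $\op{Cov}(\rng h_{e^{-n}}(z),\rng h_{e^{-n}}(w)) = \op{Gr}_V(z,w)$ from~\cite[Proposition~3.2]{shef-kpz} applies directly, whereas your $k=0$ reduction still has overlapping circles at adjacent lattice points and your covariance claim $\log(1/\max(|z-w|,e^{-n}))+O(1)$ requires a separate (easy) verification there.
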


The estimate~\eqref{eqn-one-scale-max} from Lemma~\ref{lem-one-scale-max} is a straightforward consequence of the following general result on the maximum of centered Gaussian fields, which is~\cite[Proposition 1.1]{drz-centered-max}.

\begin{prop}[\!\!\cite{drz-centered-max}] \label{prop-drz}
Let $U\subset\BB C$ be a bounded open set, let $N \geq 1$, and let $U_N = (N U) \cap \BB Z^2$. 
Let $\psi_N : U_N\rta\BB R$ be a centered Gaussian process. Assume that there exists a constant $c_0 > 0$ such that for all $u,v\in U_N$, $u\not= v$, 
\eqb \label{eqn-drz-var}
\op{Var} \psi_N(u) \leq \log N + c_0 
\eqe
and
\eqb \label{eqn-drz-diff}
\BB E\left[ (\psi_N(u) - \psi_N(v))^2 \right] \leq 2 \log|u-v|  - \left| \op{Var} \psi_N(u) - \op{Var} \psi_N(v) \right|  + c_0 . 
\eqe
There is a constant $A = A(c_0,U) > 0$ such that for every $S >0$, 
\eqb \label{eqn-drz}
\BB P\left[ \max_{u\in U_N} \psi_N(u) \geq 2 \log N - \frac34 \log \log N + S \right] \leq A S e^{-2S} e^{-S^2/(A N)} .
\eqe 
\end{prop}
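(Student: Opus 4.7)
The plan is to reduce, via a Gaussian comparison, to a modified branching random walk (MBRW) on $U_N$, for which the sharp right tail of the maximum is by now classical.

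First, I would rewrite the hypothesis~\eqref{eqn-drz-diff}: letting $\sigma_u^2 := \op{Var}\psi_N(u)$ and expanding the square, it is equivalent to the covariance bound $\op{Cov}(\psi_N(u),\psi_N(v)) \geq \max(\sigma_u^2,\sigma_v^2) - \log|u-v| - c_0/2$ for $u\neq v$. Combined with~\eqref{eqn-drz-var}, this is precisely the covariance structure of a two-dimensional log-correlated Gaussian field at scale $N$, up to additive $O(1)$.

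Second, on the dyadic box containing $U_N$ I would construct a modified branching random walk $X_N$ with $\op{Var}X_N(u) = \log N + O(1)$ and $\op{Cov}(X_N(u),X_N(v)) = \log N - \log|u-v| + O(1)$. After adding small independent Gaussian perturbations to $\psi_N$ and $X_N$ so that pointwise variances match exactly, the first step gives that the off-diagonal covariances of $\psi_N$ \emph{dominate} those of $X_N$ (up to an $O(1)$ shift in means), so Slepian's lemma yields
\eqbn
\BB P\!\left[\max_{u\in U_N}\psi_N(u) \geq t\right] \leq \BB P\!\left[\max_{u\in U_N}X_N(u) \geq t - C\right]
\eqen
for a constant $C = C(c_0,U)$. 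I would then invoke the standard MBRW right-tail bound
\eqbn
\BB P\!\left[\max_{u\in U_N}X_N(u) \geq 2\log N - \tfrac{3}{4}\log\log N + S\right] \preceq S\, e^{-2S},
\eqen
valid for all $S > 0$.

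For very large $S$ (say $S \gtrsim N^{1/2}$), the $Se^{-2S}$ bound is supplemented by a crude union bound using~\eqref{eqn-drz-var}: since $|U_N|\leq c N^2$ and $\sigma_u^2 \leq \log N + c_0$, one obtains $\BB P[\max\psi_N \geq 2\log N + S] \preceq N^2 \exp(-(2\log N + S)^2/(2\log N + O(1))) \preceq e^{-2S}e^{-S^2/(2\log N)}$. Since $\log N \leq N$, this is at most $e^{-2S}e^{-S^2/(AN)}$ after adjusting $A$, and combines with the MBRW estimate to yield~\eqref{eqn-drz} uniformly in $S$.

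The main obstacle is the sharp MBRW right tail with the Bramson correction $-\tfrac{3}{4}\log\log N$ and the $Se^{-2S}$ prefactor. Its proof requires a delicate Gaussian random-walk/ballot-barrier estimate together with a truncated second-moment argument restricted to ``good'' ancestral trajectories that stay below the Bramson curve; the prefactor $S$ reflects a three-dimensional Bessel-type boundary constraint on the extremizing backbone, while the $e^{-2S}$ factor is the expected number of exceedances of level $2\log N$ at a fixed vertex. By contrast, the covariance reformulation, the Slepian comparison, and the crude union bound for very large $S$ are all soft.
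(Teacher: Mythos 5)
The paper does not prove this proposition at all: it is imported directly from \cite{drz-centered-max} (their Proposition 1.1, stated there for $U$ the unit square), and the only content the paper itself adds is the one-line remark just after Proposition~\ref{prop-drz-tight} that the case of a general bounded open $U$ follows by covering $U$ with finitely many translated copies of the unit square and taking a union bound. Your proposal instead re-derives the cited theorem from scratch, a considerably larger undertaking than the paper makes for this statement; that said, your outline (covariance reformulation, Gaussian comparison to a modified branching random walk, sharp BRW right tail with the Bramson $-\tfrac34\log\log N$ correction, crude union bound for the super-large-deviation regime $S \gtrsim \sqrt{N}$) is indeed the broad shape of the argument in \cite{drz-centered-max}, and your remark that the dyadic box construction absorbs the extension from the unit square to general $U$ is consistent with the paper's covering remark.

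There is, however, a real gap in the comparison step as you have written it. You propose adding \emph{independent} Gaussian perturbations to $\psi_N$ and to $X_N$ to equalize the pointwise variances, and then assert that the off-diagonal covariances of the perturbed $\psi_N$ dominate those of $X_N$. But independent perturbations leave off-diagonal covariances unchanged, and the hypotheses give only an \emph{upper} bound on $\sigma_u^2 := \op{Var}\psi_N(u)$; from $\op{Cov}(\psi_N(u),\psi_N(v)) \geq \max(\sigma_u^2,\sigma_v^2) - \log|u-v| - c_0/2$ you cannot conclude $\op{Cov}(\psi_N(u),\psi_N(v)) \geq \log N - \log|u-v| - O(1)$ unless you also know $\max(\sigma_u^2,\sigma_v^2) \geq \log N - O(1)$, which is not assumed. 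For instance $\psi_N\equiv 0$ satisfies both hypotheses, and there the independently-perturbed field is i.i.d.\ Gaussian, whose maximum in fact \emph{exceeds} the MBRW maximum (no Bramson correction), so Slepian runs the wrong way. The standard fix, and what \cite{drz-centered-max} do, is to add a single \emph{common} Gaussian $Z$: set $\phi_N(u) := \psi_N(u) + a_u Z$ with $a_u := (\log N + c_0 - \sigma_u^2)^{1/2}$. This equalizes all variances at $\log N + c_0$ and simultaneously \emph{adds} $a_u a_v$ to each off-diagonal covariance; taking $\sigma_u^2 \geq \sigma_v^2$ so that $a_u a_v \geq a_u^2 = \log N + c_0 - \sigma_u^2$, the $\sigma_u^2$ cancels and one gets $\op{Cov}(\phi_N(u),\phi_N(v)) \geq \log N - \log|u-v| + c_0/2$, which is exactly what the Slepian comparison with the MBRW needs. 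One then returns from $\phi_N$ to $\psi_N$ by restricting to the event $\{Z \geq 0\}$, on which $\psi_N \leq \phi_N$ pointwise, at the cost of a factor of $2$. With that repair your outline is a faithful summary of the proof in \cite{drz-centered-max}.
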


For the proof of the tightness statement in Lemma~\ref{lem-one-scale-max}, we will use the following result, which is~\cite[Theorem 1.2]{drz-centered-max}.

\begin{prop}[\!\!\cite{drz-centered-max}] \label{prop-drz-tight}
Let $U\subset\BB C$ be a bounded open set, let $N \geq 1$, and let $U_N = (N U) \cap \BB Z^2$. 
Let $\psi_N : U_N\rta\BB R$ be a centered Gaussian process. Assume that there exists a constant $c_0 > 0$ such that for all $u,v\in U_N$, $u\not= v$, the estimates~\eqref{eqn-drz-var} and~\eqref{eqn-drz-diff} from Proposition~\ref{prop-drz} hold, and also
\eqb \label{eqn-drz-cov}
\left|\op{Cov}(\psi_N(u) , \psi_N(v)) - (\log N - \log |u-v|) \right| \leq c_0 .
\eqe
Then 
\eqbn
\BB E\left[ \max_{u\in U_N} \psi_N(u) \right] =  2 \log N - \frac34 \log \log N + O(1),
\eqen
with the $O(1)$ depending only on $c_0$, and the random variables
\eqbn
\max_{u\in U_N} \psi_N(u)  - \left(   2 \log N - \frac34 \log \log N  \right) 
\eqen
for $N\in\BB N$ are tight. 
\end{prop}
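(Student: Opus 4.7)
The plan has three stages: package the upper tail from Proposition~\ref{prop-drz}, establish a positive-probability lower bound on $\max_{U_N}\psi_N$ at the correct level $m_N := 2\log N - \frac34\log\log N$ via a Bramson-type modified second moment method, and upgrade positive probability to tightness via a conditional Gaussian comparison argument. The upper tail is immediate: integrating the bound $\BB P[\max_{U_N}\psi_N\geq m_N + S]\leq ASe^{-2S}e^{-S^2/(AN)}$ from Proposition~\ref{prop-drz} gives $\BB E[(\max_{U_N}\psi_N - m_N)_+] = O(1)$ and tightness of the upper tail, so the real content reduces to the matching lower tail: produce, for every $\eta>0$, some $S_\eta$ with $\BB P[\max_{U_N}\psi_N < m_N - S_\eta]\leq \eta$ uniformly in $N$.

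For the positive-probability lower bound I set $L := \log_2 N$ and use~\eqref{eqn-drz-cov} to build an approximate hierarchical decomposition $\psi_N(u) = \sum_{k=0}^{L}Y_k(u)$, with $Y_k(u)$ the increment between the averages of $\psi_N$ on dyadic boxes of side $2^{-k+1}N$ and $2^{-k}N$ about $u$; hypotheses~\eqref{eqn-drz-var}--\eqref{eqn-drz-cov} force $\op{Var} Y_k(u) = \log 2 + O(1)$, approximate independence across $k$ for fixed $u$, and $Y_k(u) = Y_k(v) + O(1)$ whenever $|u-v|\leq 2^{-k}N$, so $k\mapsto S_k(u):=\sum_{j\leq k}Y_j(u)$ behaves like a Brownian motion of length $L$ with $O(1)$ boundary effects. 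Fix $S>0$, a large constant $B$, and the Bramson barrier $b_k := (k/L)(m_N - S) + B\min(\log(1+k),\log(1+L-k))$, and let $G_N\subset U_N$ be the random set of points $u$ with $\psi_N(u)\in[m_N-S,m_N-S+1]$ and $S_k(u)\leq b_k$ for every $k\leq L$. A Gaussian density estimate together with the classical ballot/Brownian-bridge bound (which contributes the $(\log N)^{-3/2}$ factor absorbing exactly the $\frac34\log\log N$ correction) gives $\BB E|G_N|\gtrsim e^{2S}$. The main obstacle is the matching second-moment bound $\BB E|G_N|^2\lesssim(\BB E|G_N|)^2$: stratify pairs $(u,v)$ by branching level $k^\ast(u,v):=\lceil L - \log_2|u-v|\rceil$, use~\eqref{eqn-drz-diff} to dominate the joint increments by two centered Gaussian walks that coincide on $[0,k^\ast]$ and are independent thereafter, and exploit the endpoint-lowering term in $b_k$ to keep the resulting sum over $k^\ast$ summable uniformly in $L$ --- without this lowering the sum diverges logarithmically and the second moment fails, so calibrating $B$ against~\eqref{eqn-drz-var}--\eqref{eqn-drz-diff} is the delicate point. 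Paley--Zygmund then delivers $\BB P[\max_{U_N}\psi_N\geq m_N - S]\geq c(S) > 0$.

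To upgrade positive probability to tightness, partition $U$ into $M$ disjoint open subregions $U^{(1)},\dots,U^{(M)}$ at pairwise Euclidean distance $\geq \delta > 0$, so that each rescaled restriction $\psi_N|_{NU^{(j)}\cap\BB Z^2}$ inherits the hypotheses of the proposition (with $\log N$ replaced by $\log N - \log(1/\delta) + O(1)$, which shifts $m_N$ by $O(1)$) and satisfies the positive-probability bound with a common constant $c(S)$. Decouple the subregions by conditioning on a coarse-scale version $\Phi_N$ of $\psi_N$ at scale $\delta N$: the residual $\Psi_N := \psi_N - \Phi_N$ has essentially independent restrictions to different $NU^{(j)}$, and the residual correlation coming from~\eqref{eqn-drz-cov} is handled by a Kahane comparison with a hierarchical proxy field whose cross-subregion covariances are exactly zero (legitimate up to an additive $O(1)$ shift thanks to the uniform bound on cross-region covariances at separation $\delta$). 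Since $\|\Phi_N\|_\infty$ is tight uniformly in $N$, applying the conditional positive-probability estimate on each subregion and conditional independence across the $M$ pieces gives $\BB P[\max_{U_N}\psi_N < m_N - S - O(1)]\leq (1-c(S))^M + o_N(1)$. Taking $M = M(S)\to\infty$ proves tightness of the lower tail, and combining with the upper-tail bound yields both the tightness assertion and $\BB E[\max_{U_N}\psi_N] = m_N + O(1)$.
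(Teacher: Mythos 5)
The paper does not actually prove this proposition: it is cited directly from~\cite[Theorem 1.2]{drz-centered-max}, and the only content the paper supplies is the remark (immediately after the statement) that~\cite{drz-centered-max} treats only the unit square, so the general bounded open $U$ is handled by covering $U$ with finitely many translated unit squares --- a union bound for the upper tail, and restriction to a single square contained in $U$ for the lower tail and tightness. Your proposal, by contrast, is a from-scratch sketch of the cited theorem itself: a Bramson-type truncated second-moment argument with a logarithmically bowed barrier to get a positive-probability lower bound at level $m_N = 2\log N - \tfrac34\log\log N$, followed by a multi-region decoupling argument to upgrade positive probability to tightness. This is a genuinely different route --- you are rebuilding the black box rather than using it --- and the outline is the right one (it is essentially how~\cite{drz-centered-max} proceeds); your remarks that the ballot $(\log N)^{-3/2}$ factor is what produces the $\tfrac34\log\log N$ shift, and that the middle bump in the barrier is what makes the second moment over branching levels summable, are exactly the right diagnostics. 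Two caveats on the sketch: the hypotheses~\eqref{eqn-drz-var}--\eqref{eqn-drz-cov} control covariances only up to $O(1)$, so the ``approximate hierarchical decomposition'' with independent increments across scales is not free and requires a Gaussian comparison step; and Kahane/Slepian-type comparisons apply to monotone or convex functionals, so the lower-tail decoupling event has to be reformulated before the comparison is legitimate (this is precisely the technical work in~\cite{drz-centered-max}, not an error in your plan, but it is where the length of the argument hides). If the goal is to match what the paper actually does for this proposition, the only step to supply is the reduction from general $U$ to the unit square; everything else is being imported by citation.
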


We note that~\cite{drz-centered-max} only considers the case when $U$ is the unit square. Propositions~\ref{prop-drz} and~\ref{prop-drz-tight} in the case of a general bounded open $U$ can be deduced from the case of the unit square by covering $U$ by finitely many translated copies of the unit square.

\begin{proof}[Proof of Lemma~\ref{lem-one-scale-max}]
Let $K$ be the smallest integer such that $K \geq 2e^k$, so that 
\eqbn
  e^{-n-k} K \in  [2 e^{-n} , (2+e^{-k})e^{-n} ] .
\eqen 
Let $\phi_n$ be the centered Gaussian process on on $(\frac{e^{n+k}}{K} U) \cap \BB Z^2$ defined by 
\eqbn
\phi_n(u) = \rng h_{e^{-n}}( e^{-n-k} K u) ,\quad \forall u \in  (\frac{e^{n+k}}{K} U) \cap \BB Z^2 .
\eqen
We will check the hypotheses of Propositions~\ref{prop-drz} and~\ref{prop-drz-tight} (with $N = e^{n+k}/K \approx  e^n /2$) for the field $\phi_n$.
This will be done using the calculations from~\cite[Section 3.1]{shef-kpz}, and will lead to a version of the lemma statement with $  e^{-n-k} K \BB Z^2$ instead of $e^{-n-k}\BB Z^2$. 
We will then replace $K\BB Z^2$ by $K\BB Z^2+x$ for $x\in [0,K]_{\BB Z}^2$ and take a union bound over the $O_k(e^{ 2k})$ possibilities for $x$. 

The reason why we do not prove a bound for $(e^{-n-k}\BB Z^2)\cap U$ directly is that the formulas from~\cite{shef-kpz} have a simpler form when the circles we are taking averages over are disjoint. Note that the circles $\bdy B_z(e^{-n} )$ and $\bdy B_w(e^{-n})$ can intersect for $z,w\in e^{-n-k}\BB Z^2$. 

Throughout, we let $\op{CR}(\cdot;V)$ be the conformal radius w.r.t.\ $V$ and let $\op{Gr}_V$ be the Green's function for Brownian motion killed upon exiting $V$. 
Also, $O(1)$ denotes a quantity which is bounded above in absolute value by a constant depending only on $U,V$. 
\medskip

\noindent\textit{Step 1: verifying the hypotheses of Propositions~\ref{prop-drz} and~\ref{prop-drz-tight}.}
For $z,w\in V$ with $|z-w| \geq   e^{-n-k} K$, the disks $B_z(e^{-n} )$ and $B_w(e^{-n} )$ are disjoint. So, we can apply~\cite[Proposition 3.2]{shef-kpz} to get 
\eqb \label{eqn-circle-avg-var}
\op{Var} \rng h_{e^{-n}}(z) = n + \log \op{CR}(z;V) 
\eqe
and
\eqb \label{eqn-circle-avg-cov}
\op{Cov}\left( \rng h_{e^{-n}}(z) , \rng h_{e^{-n}}(w) \right) = \op{Gr}_V(z,w) .
\eqe
Hence
\allb \label{eqn-circle-avg-diff}
\BB E\left[\left( \rng h_{e^{-n}}(z)  - \rng h_{e^{-n}}(w) \right)^2\right] 
= 2 n + \log \op{CR}(z;V)  + \log \op{CR}(w;V)  - 2 \op{Gr}_V(z,w) .
\alle

\medskip
\noindent\textit{Proof of~\eqref{eqn-drz-var}.}
Since $\ol U\subset V$, we have that $|\log \op{CR}(z;V)|$ is bounded above by a constant on $U$. Hence, by~\eqref{eqn-circle-avg-var}, 
\eqb \label{eqn-lattice-field-var}
|\op{Var} \phi_n(u)  -  n |  = O(1) , \quad \forall u \in \left( \frac{e^{n+k}}{K} U \right) \cap \BB Z^2.
\eqe
\medskip

\noindent\textit{Proof of~\eqref{eqn-drz-diff}.}
The Green's function satisfies $\op{Gr}_V(z,w) = \log(1/|z-w|) + g$, for a function $g$ which is continuous on $V$ and hence bounded on $U$. 
By~\eqref{eqn-circle-avg-var} and our above bounds for $\op{CR}(z;V)$ and $\op{Gr}_V(z,w)$, we get that for $u,v\in  \left( \frac{e^{n+k}}{K} U \right) \cap \BB Z^2$ with $u\not=v$, 
\allb
\BB E\left[\left( \phi_n(u)  - \phi_n(v) \right)^2\right] 
 = 2 n   -  2\log \frac{1}{|e^{-n} u - e^{-n} v|}      + O(1) 
 = 2 \log |u-v| + O(1)  .
\alle
By~\eqref{eqn-lattice-field-var}, we have $|\op{Var} \phi_n(u)  - \op{Var} \phi_n(v)| = O(1)$, so  
\eqb \label{eqn-lattice-field-diff}
\BB E\left[\left( \phi_n(u)  - \phi_n(v) \right)^2\right]  \leq 2 \log |u-v|   - |\op{Var} \phi_n(u)  - \op{Var} \phi_n(v)|      + O(1) .
\eqe
\medskip

\noindent\textit{Proof of~\eqref{eqn-drz-cov}.} By~\eqref{eqn-circle-avg-cov}, for $u,v \in  \left( \frac{e^{n+k}}{K} U \right) \cap \BB Z^2$ with $u\not=v$,  
\allb \label{eqn-lattice-field-cov}
\op{Cov}\left( \phi_n(u) , \phi_n(v)  \right) 
&= \op{Gr}_V(e^{-n} u , e^{-n} v) \notag \\
&= \log \frac{1}{|e^{-n} u - e^{-n} v|} + O(1) \notag \\
&= n - \log |u-v| + O(1) .
\alle 
\medskip

\noindent\textit{Step 2: applying Propositions~\ref{prop-drz} and~\ref{prop-drz-tight}.}
By~\eqref{eqn-lattice-field-var} and~\eqref{eqn-lattice-field-diff}, we can apply~\cite[Proposition 1.1]{drz-centered-max} to the field $\phi_n$ (with $N = e^{n+k}/K \approx \frac12 e^n $) to get that for each $n\in\BB N$ and each $S>1$, 
\eqbn
\BB P\left[ \max_{u \in \left( \frac{e^{n+k}}{K} U \right) \cap \BB Z^2 } |\phi_n(u)| > 2 n - \frac34  \log n +S\right] \preceq    S e^{-2 S} 
\eqen
with an implicit constant depending only on $U,V$ (note that we ignore the factor of $ e^{-S^2/(A N)}$ in~\eqref{eqn-drz}, which is not necessary for our purposes). 
This implies that 
\eqb \label{eqn-use-drz}
\BB P\left[ \max_{z \in   (  e^{-n-k} K \BB Z^2) \cap U } |\rng h_{e^{-n}}(z)| > 2 n - \frac34  \log n +S \right] \preceq    S e^{-2 S} .
\eqe
Similarly, by~\eqref{eqn-lattice-field-var}, \eqref{eqn-lattice-field-diff}, and~\eqref{eqn-lattice-field-cov}, we can apply Proposition~\ref{prop-drz-tight} to get that the random variables
\eqb  \label{eqn-use-drz-tight}
\max_{z \in  (  e^{-n-k} K \BB Z^2) \cap U } \rng h_{e^{-n}}(z)  - \left(   2 n - \frac34 \log n  \right) 
\eqe 
are tight.
\medskip

\noindent\textit{Step 3: extending from $(Ke^{-n-k}\BB Z^2)\cap U$ to $(e^{-n-k}\BB Z^2)\cap U$.}
For any $x \in [0,K]_{\BB Z}^2$, the same argument leading to~\eqref{eqn-use-drz} and~\eqref{eqn-use-drz-tight} shows that
\eqb \label{eqn-use-drz-x}
\BB P\left[ \max_{z \in  (  e^{-n-k} (K  \BB Z^2 + x) ) \cap U } |\rng h_{e^{-n}}(z)| > 2 n - \frac34  \log n +S \right] \preceq    S e^{-2 S}  
\eqe
and the random variables 
\eqb \label{eqn-use-drz-tight-x} 
\max_{z \in  ( e^{-n-k}( K \BB Z^2+x) ) \cap U } \rng h_{e^{-n}}(z)  - \left(   2 n - \frac34 \log n  \right)  
\eqe
are tight.  
Each $z\in e^{-n-k} \BB Z^2$ belongs to $e^{-n-k}( K \BB Z^2 + x)$ for some $x \in [0,K]_{\BB Z}^2$. 
Hence, by a union bound over $O_k(e^{2k})$ possibilities for $x$, the estimate~\eqref{eqn-use-drz-x} implies~\eqref{eqn-one-scale-max}.
Similarly, the tightness of the random variables in~\eqref{eqn-use-drz-tight-x} implies the tightness of the random variables in~\eqref{eqn-one-scale-tight}. 
\end{proof}

\begin{proof}[Proof of Propositions~\ref{prop-gff-max} and~\ref{prop-gff-tight}]
Let $V\subset\BB C$ be a bounded, simply connected open set which contains $\ol U$, and let $\rng h$ be the zero-boundary GFF on $V$.
For any $\nu > 1/2$, we can apply Lemma~\ref{lem-one-scale-max} with $S = \nu \log n$, followed by the Borel-Cantelli lemma, to get that a.s.\ for each large enough $n\in\BB N$, 
\eqbn
 \max_{z \in  U \cap (e^{-n - k}  \BB Z^2  ) } |\rng h_{e^{-n}}(z)| \leq 2 n - \left(\frac34 - \nu \right)   \log n .
\eqen
By taking $\nu = 3/4-\alpha$ and choosing $C$ to be large enough to account for finitely many small values of $n$, we obtain~\eqref{eqn-gff-max} with $\rng h$ in place of $h$.
To deduce the estimate for $h$, we use the Markov property of the whole-plane GFF (see, e.g.,~\cite[Lemma 2.2]{gms-harmonic}) to write $h|_V = \rng h + \frk h$, where $\frk h$ is a random harmonic function on $V$. Since $\frk h$ is continuous and $\ol U \subset V$, we have $\sup_{z\in U} |(h-\rng h)(z)| = \sup_{z\in U} |\frk h(z)| < \infty$, so~\eqref{eqn-gff-max} for $\rng h$ implies~\eqref{eqn-gff-max} for $h$. This gives Proposition~\ref{prop-gff-max}. 

As for Proposition~\ref{prop-gff-tight}, the tightness of the random variables in~\eqref{eqn-one-scale-tight} implies the tightness of the random variables in~\eqref{eqn-gff-tight} via the same argument used for~\eqref{eqn-gff-max} above.
To deduce~\eqref{eqn-gff-lower}, we note that the tightness of~\eqref{eqn-gff-tight} implies that for any $\alpha' > 3/4$, 
\eqbn
\lim_{n\rta\infty} \BB P\left[ \max_{z \in ( e^{-n -k } \BB Z^2)\cap U} \rng h_{e^{-n}}(z)  \geq   2 n - \alpha' \log n  \right] = 1 .
\eqen
Hence a.s.\ there are infinitely values of $n\in\BB N$ for which $\max_{z \in (2 e^{-n - k } \BB Z^2)\cap U} \rng h_{e^{-n}}(z)  \geq   2 n - \alpha' \log n $.  
\end{proof}

\section{Proof of continuity}
\label{sec-continuity}

\subsection{Estimate for distances in annuli in terms of circle averages} 
\label{sec-all-scales}

Throughout this subsection and the next, we allow for a general $\xi  > 0$ and corresponding $Q = Q(\xi) > 0$, i.e., we do not require that $\xi = \xi_\crit$ and $Q=2$. 

For $z\in\BB C$ and $t\in\BB R$, we define the annuli
\eqb \label{eqn-annuli-def}
A_z^\circ (e^{-t}) := \BB A_z(e^{-t-51/100} , e^{-t-1/2})      \quad \text{and} \quad
A_z^\parallel (e^{-t}) := \BB A_z(e^{-t-100} , e^{-t})   .
\eqe 
Note that $A_z^\circ (e^{-t})$ is contained in a small neighborhood of the circle $\bdy B_z(e^{-t-1/2})$ and $A_z^\parallel (e^{-t})$ contains all of $B_z(e^{-t})$ except for a small ball centered at $z$. Our estimates for distances will be proven by stringing together paths between the inner and outer boundaries of annuli of the form $A_z^\parallel(e^{-t})$ and paths in annuli of the form $A_z^\circ(e^{-t})$ which disconnect the inner and outer boundaries. To this end, we will need to bound $D_h(\text{around $A_z^\circ(e^{-t})$})$ and $D_h(\text{across $A_z^\parallel(e^{-t})$})$. We will estimate these distances in terms of the circle average process for $h$. Since we will need to consider circles with slightly different center points and radii, we will also need a continuity estimate for this circle average process, which we prove simultaneously with our bounds for distances. To make our estimates more convenient to state, we introduce the following notation. 

\begin{defn} \label{def-triple-max}
For $z\in\BB C$ and $t \geq 0$, let $M_z(e^{-t})$ be the maximum of the following four quantities:
\begin{enumerate}
\item $\exp\left( -\xi h_{e^{-t}}(z) +  \xi Q t \right) D_h\left(\text{around $A_z^\circ(e^{-t})$}\right) $;
\item $\exp\left( -\xi h_{e^{-t}}(z) + \xi Q t \right) D_h\left(\text{across $A_z^\parallel(e^{-t})$}\right)$;
\item $\left[ \exp\left( - \xi h_{e^{-t}}(z) + \xi Q t \right)  D_h\left(\text{across $A_z^\parallel(e^{-t})$}\right) \right]^{-1}$;
\item $\sup_{r \in [ e^{-t-100}   , e^{-t} ]} \sup_{w\in B_z(e^{-t}-r )} \exp\left( |h_r(w) - h_{e^{-t}}(z)| \right)$. 
\end{enumerate}
\end{defn}

The reason why we include $\left[ \exp\left( - \xi h_{e^{-t}}(z) + \xi Q t \right)  D_h\left(\text{across $A_z^\parallel(e^{-t})$}\right) \right]^{-1}$ in our definition of $M_z(e^{-t})$ is because we will need a lower bound for $D_h$-distances for the proof of Proposition~\ref{prop-optimality}. 
The goal of this subsection is to prove the following estimate for $M_z(e^{-t})$ in terms of the circle average process for $h$.

\begin{lem} \label{lem-all-scales}
Let $U\subset\BB C$ be a bounded open set and let $\zeta \in (0,1/2)$. 
Almost surely, there exists a random $C \in (0,\infty)$ such that the random variable $M_z(e^{-n})$ from Lemma~\ref{lem-cond-reg} satisfies
\eqb  \label{eqn-all-scales}
M_z(e^{-n}) \leq C \exp\left( \left| 2n - h_{e^{-n}}(z)\right|^{1/2+\zeta}\right) ,\quad\forall n\in\BB N, \quad\forall z \in (e^{-n-100} \BB Z^2) \cap U . 
\eqe 
\end{lem}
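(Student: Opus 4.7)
The plan is to apply a conditional tail bound for $M_z(e^{-n})$ (Lemma~\ref{lem-cond-reg}, the conditional analogue of Lemma~\ref{lem-dist-tail}) separately at each lattice point, then to take a stratified union bound that exploits how the Gaussian rareness of high circle averages trades off against the number of points. Concretely, Lemma~\ref{lem-cond-reg} should give, for each $S > 3$,
\eqbn
\BB P\bigl[M_z(e^{-n}) > S \,\bigm|\, h_{e^{-n}}(z)\bigr] \leq c_0 \exp\bigl(-c_1 (\log S)^2/\log\log S\bigr),
\eqen
with $c_0,c_1>0$ depending only on $U$ and $\xi$. The fourth (circle-average continuity) term in Definition~\ref{def-triple-max} has an even better Gaussian tail, obtained from standard Kolmogorov-type estimates for the circle average process.

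For each $z \in (e^{-n-100}\BB Z^2)\cap U$ and each $k \in \BB N_0$, I would apply the conditional bound with $S = \exp(k^{1/2+\zeta})$ on the event $\{|2n - h_{e^{-n}}(z)| \in [k,k+1]\}$. Integrating against the Gaussian density of $h_{e^{-n}}(z)$ (variance $n + O(1)$), one obtains
\eqbn
\BB P\bigl[M_z(e^{-n}) > e^{k^{1/2+\zeta}},\ |2n - h_{e^{-n}}(z)| \in [k,k+1]\bigr] \leq \frac{C}{\sqrt{n}}\, e^{-(2n-k)^2/(2n)}\, e^{-c_1 k^{1+2\zeta}/\log(k\vee e)}.
\eqen
Summing this over the $\asymp e^{2n}$ lattice points and over $k$, Borel--Cantelli will yield the claim (after enlarging $C$ to absorb finitely many small $n$) as soon as the resulting bound is summable in $n$.

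The sum splits into two regimes. For $k \gtrsim (\log n)^2$ the factor $\exp(-c_1 k^{1+2\zeta}/\log k)$ is smaller than any polynomial in $n$, so the contribution is easily summable. The delicate regime is $k$ of order $\log n$. Here I would invoke Proposition~\ref{prop-gff-max} with some $\alpha \in (0,1/4)$: almost surely a random $C_{\max}$ satisfies $|2n - h_{e^{-n}}(z)| \geq \alpha\log n - C_{\max}$ simultaneously for every lattice point $z$ and every $n$. On this event the sum over $k$ restricts to $k \geq \alpha\log n - C_{\max}$, for which $\exp(-c_1 k^{1+2\zeta}/\log k) \leq \exp\bigl(-c_1 (\alpha\log n)^{1+2\zeta}/\log\log n\bigr)$ beats every polynomial in $n$, while the prefactor $e^{2n} n^{-1/2} e^{-(2n-k)^2/(2n)}$ is only polynomial in this range. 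The net bound is summable in $n$, completing Borel--Cantelli.

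The main obstacle is exactly this borderline regime $k \asymp \log n$: a direct union bound over lattice points loses a factor of $e^{2n}$ that is only balanced, not beaten, by the Gaussian density, and what ultimately saves the argument is the interaction between the near-maximum restriction $k \geq \alpha\log n$ supplied by Proposition~\ref{prop-gff-max} and the conditional tail exponent $k^{1+2\zeta}/\log k$. The exponent $1/2+\zeta$ in~\eqref{eqn-all-scales} is chosen precisely so that $(\log n)^{1+2\zeta}/\log\log n$ outgrows $\log n$, which is why any $\zeta > 0$ suffices and $\zeta = 0$ would fail. A secondary technical point will be that Lemma~\ref{lem-cond-reg} must deliver its tail jointly for all four quantities in Definition~\ref{def-triple-max}, which requires combining the Weyl-scaling argument (to absorb the $e^{\xi h_{e^{-n}}(z)}$ factor) with a modulus-of-continuity estimate for the circle average process across the nearby centers and radii appearing in the fourth term.
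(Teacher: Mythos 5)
Your proposal takes essentially the same approach as the paper: the conditional tail bound of Lemma~\ref{lem-cond-reg}, a stratified union bound over lattice points and over $k = \lfloor |2n - h_{e^{-n}}(z)|\rfloor$, the restriction $k\gtrsim\alpha\log n$ supplied by Proposition~\ref{prop-gff-max}, and Borel--Cantelli (the paper simply packages the single-scale stratified union bound as a separate intermediate lemma, Lemma~\ref{lem-max-and-circle}). The one precision worth flagging is that Lemma~\ref{lem-cond-reg} conditions on $h_{e^{-n}}(z)-h_1(z)$ rather than on $h_{e^{-n}}(z)$ --- this is exactly what the GFF independence structure (Lemma~\ref{lem-gff-ind}) delivers --- so the paper runs the whole union-bound argument with $h_{e^{-n}}(z)-h_1(z)$ and only converts to $h_{e^{-n}}(z)$ at the end via $\sup_{U}|h_1|<\infty$ together with subadditivity of $x\mapsto x^{1/2+\zeta}$.
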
 

Proposition~\ref{prop-gff-max} implies that a.s.\ $2n - h_{e^{-n}}(z) > 0$ for each large enough $n\in\BB N$ and each $z\in (e^{-n-100} \BB Z^2) \cap U$, so the absolute value in~\eqref{eqn-all-scales} is only relevant for finitely many values of $n$. 

For most of the proof of Lemma~\ref{lem-all-scales}, we will prove estimates in terms of $h_{e^{-t}}(z) - h_1(z)$ instead of $h_{e^{-t}}(z)$. The reason why this is convenient is that, as explained in the proof of Lemma~\ref{lem-cond-reg}, $h_{e^{-t}}(z) - h_1(z)$ is independent from $M_z(e^{-t})$. Eventually, we will absorb $h_1(z)$ into a global constant using the fact that $\sup_{z\in U} |h_1(z)|$ is a.s.\ finite.

To prove Lemma~\ref{lem-all-scales}, we will first use basic estimates for $D_h$ and for $\{h_\ep\}_{\ep > 0}$ to bound the conditional probability that $M_z(e^{-t}) > S$ given $h_{e^{-t}}(z) - h_1(z)$, uniformly over all $z\in \BB C$ (Lemma~\ref{lem-cond-reg}). This will allow us to estimate 
\eqbn
\BB P\left[ M_z(e^{-t})  > \exp\left( \left| 2t - ( h_{e^{-t}}(z) - h_1(z) )\right|^{1/2+\zeta}\right) \,|\, h_{e^{-t}}(z) - h_1(z) \right] .
\eqen
By combining the resulting estimate with the Gaussian tail bound for the Gaussian random variable $h_{e^{-t}}(z) -h_1(z)$, we will get for each $k\in \BB N$ a bound for the probability that $M_z(e^{-t})  > \exp\left( k^{1/2+\zeta}\right)$ and $2t - (h_{e^{-t}}(z) - h_1(z)) \in [k, k+1]$. We will then apply this estimate together with a union bound over all $z\in (e^{-t-100}\BB Z^2) \cap U$ to prove a version of Lemma~\ref{lem-all-scales} which holds for a single value of $t>0$, rather than for all $n\in\BB N$ (Lemma~\ref{lem-max-and-circle}). Lemma~\ref{lem-all-scales} will be deduced from this and a union bound over $n$.

\begin{lem} \label{lem-cond-reg}
There are universal constants $c_0,c_1> 0$ such that for each $z\in\BB C$, $t \geq 0$, and $S > 3$, a.s.\
\eqb  \label{eqn-cond-reg}
\BB P\left[ M_z(e^{-t}) > S \,|\, h_{e^{-t}}(z) - h_1(z) \right] \leq c_0 e^{-c_1 (\log S)^2/(\log\log S)^2} .
\eqe 
\end{lem}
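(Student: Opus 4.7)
The strategy is to use Weyl scaling (Axiom~\ref{item-metric-f}) and locality (Axiom~\ref{item-metric-local}) to recognize $M_z(e^{-t})$ as a functional of the shifted field $h - h_{e^{-t}}(z)$ restricted to $B_z(e^{-t})$, then to prove, via a short Gaussian covariance computation, that this restricted shifted field is independent of $h_{e^{-t}}(z) - h_1(z)$. Once this independence is in place, the conditional tail in~\eqref{eqn-cond-reg} becomes an unconditional one, which follows from Lemma~\ref{lem-dist-tail} together with a standard Gaussian supremum estimate.

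By Axiom~\ref{item-metric-translate} combined with Weyl scaling to absorb the additive constant $h_1(z)$, one checks that setting $\tilde h := h(\cdot + z) - h_1(z)$ gives $\tilde h \eqD h$, $\tilde h_1(0) = 0$, $M_z(e^{-t})[h] = M_0(e^{-t})[\tilde h]$, and $h_{e^{-t}}(z) - h_1(z) = \tilde h_{e^{-t}}(0)$, so it suffices to work with $z = 0$ and the normalization $h_1(0) = 0$. Weyl scaling applied to the random constant $h_{e^{-t}}(0)$ gives $e^{-\xi h_{e^{-t}}(0)} D_h = D_{h - h_{e^{-t}}(0)}$; plugged into quantities (1)--(3) of Definition~\ref{def-triple-max}, the prefactor cancels, and by locality the result depends on $h$ only through the restriction of $h - h_{e^{-t}}(0)$ to $A_0^\circ(e^{-t}) \cup A_0^\parallel(e^{-t}) \subset B_0(e^{-t})$. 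Quantity~(4) is visibly a function of $(h - h_{e^{-t}}(0))_r(w)$ for circles $\bdy B_w(r) \subset B_0(e^{-t})$. Using the standard circle-average covariance identities (see~\cite[Section~3.1]{shef-kpz}), for every $w \in B_0(e^{-t})$ one has
\[
\op{Cov}(h(w), h_{e^{-t}}(0)) = \frac{1}{2\pi}\int_0^{2\pi}\log\frac{1}{|w - e^{-t}e^{i\phi}|}\,d\phi = t = \op{Var}(h_{e^{-t}}(0)),
\]
so $\op{Cov}(h(w) - h_{e^{-t}}(0), h_{e^{-t}}(0)) = 0$; pairing with test functions supported in $B_0(e^{-t})$ and using joint Gaussianity shows that the restriction $(h - h_{e^{-t}}(0))|_{B_0(e^{-t})}$ is independent of $h_{e^{-t}}(0)$, hence $M_0(e^{-t}) \perp h_{e^{-t}}(0)$.

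It remains to prove $\BB P[M_0(e^{-t}) > S] \leq c_0 e^{-c_1 (\log S)^2 / (\log\log S)^2}$. For quantities (1)--(3), I apply Lemma~\ref{lem-dist-tail} at scale $r \asymp e^{-t}$ with fixed annuli/compacta chosen so that $rA$ matches $A_0^\circ(e^{-t})$ or $A_0^\parallel(e^{-t})$; this produces upper and lower tail bounds stated in terms of $e^{\xi h_r(0)} r^{\xi Q}$, and the centered Gaussian discrepancy $h_r(0) - h_{e^{-t}}(0)$, of variance $O(1)$, can be absorbed into $S$ at the cost of an extra factor $e^{-c(\log S)^2}$. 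The upper tail in Lemma~\ref{lem-dist-tail} handles quantities (1) and~(2), while the lower tail handles quantity~(3). For quantity~(4), the scale invariance of the whole-plane GFF up to additive constant implies that $(\rho, u) \mapsto h_{e^{-t}\rho}(e^{-t} u) - h_{e^{-t}}(0)$ on the compact parameter set $\{(\rho, u) : \rho \in [e^{-100}, 1],\, u \in \ol{B_0(1-\rho)}\}$ has a law not depending on $t$, so Borell--TIS yields a Gaussian tail $e^{-c(\log S)^2}$ on its exponential supremum. A union bound, together with the elementary inequality $(\log S)^2 / \log\log S \geq (\log S)^2 / (\log\log S)^2$, concludes. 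The main obstacle is the independence step, since it is precisely what allows the uniform unconditional tail bound to stand in for the conditional one required by the lemma.
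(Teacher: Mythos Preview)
Your proof is correct and follows essentially the same structure as the paper's: show that $M_z(e^{-t})$ is measurable with respect to $(h-h_{e^{-t}}(z))|_{B_z(e^{-t})}$ via Weyl scaling and locality, establish independence of this restricted field from $h_{e^{-t}}(z)-h_1(z)$, and then prove the unconditional tail using Lemma~\ref{lem-dist-tail} for quantities (1)--(3) and Borell--TIS plus scale invariance for quantity~(4).

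The one noteworthy difference is in the independence step. The paper invokes Lemma~\ref{lem-gff-ind}, which goes through the orthogonal decomposition of the Dirichlet space into radial and angular parts and then uses the independent-increments property of the Brownian motion $s\mapsto h_{e^{-s}}(0)$. Your direct covariance computation---using the classical identity $\frac{1}{2\pi}\int_0^{2\pi}\log|w-re^{i\phi}|\,d\phi=\log\max(|w|,r)$ to get $\op{Cov}(h(w),h_{e^{-t}}(0))=t=\op{Var}(h_{e^{-t}}(0))$ for $|w|<e^{-t}$---is more elementary and self-contained, though the paper's approach yields the slightly stronger statement that the full process $\{h_{e^{-s}}(0)-h_{e^{-t}}(0):s\le t\}$ is independent of the restricted field. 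For the present lemma your version suffices. Two cosmetic remarks: taking $r=e^{-t}$ with the fixed annulus $A=\BB A_0(e^{-51/100},e^{-1/2})$ (and similarly for $A^\parallel$) in Lemma~\ref{lem-dist-tail} gives exactly $e^{\xi h_{e^{-t}}(0)}e^{-\xi Q t}$, so no circle-average discrepancy needs to be absorbed; and the comparison $(\log S)^2/\log\log S\ge(\log S)^2/(\log\log S)^2$ only holds once $\log\log S\ge 1$, but the bounded range $S\in(3,e^e]$ is harmlessly absorbed into $c_0$.
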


The reason why we can condition on $h_{e^{-t}}(z) - h_1(z)$ in Lemma~\ref{lem-cond-reg} is the following basic fact about the whole-plane GFF.

\begin{lem} \label{lem-gff-ind}
Let $h$ be a whole-plane GFF normalized so that $h_1(0) = 0$. 
For each $t \in\BB R$, the process $\{h_{e^{-s}}(0) - h_{e^{-t}}(0) : s \leq t \}$ is independent from $(h-h_{e^{-t}}(0)) |_{B_0( e^{-t} )}$. 
\end{lem}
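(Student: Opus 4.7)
The plan is to derive the lemma from the standard \emph{radial decomposition} of the whole-plane GFF together with the Markov property of Brownian motion. With the normalization $h_1(0)=0$, set $B_t := h_{e^{-t}}(0)$ for $t \in \BB R$. A standard Gaussian computation (cf.\ \cite[Section 3.1]{shef-kpz}) shows that $B$ is a two-sided standard linear Brownian motion with $B_0=0$, and that writing $h = B_{-\log|\cdot|} + H$ defines, as a random distribution on $\BB C\setminus\{0\}$, a ``lateral'' field $H$ whose average over every circle $\bdy B_0(e^{-u})$ vanishes and which is independent of the Brownian motion $B$. This radial--lateral decomposition is the main technical input.

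Granted it, $h_{e^{-s}}(0) - h_{e^{-t}}(0) = B_s - B_t$, so the first process in the lemma is a measurable function of the past-of-$t$ increments $(B_s - B_t)_{s\leq t}$ alone. For the second object, I would test $(h - h_{e^{-t}}(0))|_{B_0(e^{-t})}$ against an arbitrary $\phi \in C_c^\infty(B_0(e^{-t}))$ and decompose the pairing as
\begin{equation*}
\int_{B_0(e^{-t})} (B_{-\log|z|} - B_t)\,\phi(z)\,dz \;+\; (H,\phi).
\end{equation*}
Every $z$ in the domain of integration satisfies $-\log|z| \geq t$, so the first term is a measurable function of the future increments $(B_s - B_t)_{s\geq t}$ only, while the second is a measurable function of $H|_{B_0(e^{-t})}$ only.

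The independence claim then reduces to two standard facts: the Markov property of Brownian motion, which makes $(B_s - B_t)_{s\leq t}$ and $(B_s - B_t)_{s\geq t}$ independent; and the independence of $B$ from $H$ coming out of the radial decomposition, which in particular makes $(B_s - B_t)_{s\leq t}$ independent of $H|_{B_0(e^{-t})}$. Combining these two statements, the past circle-average increments are independent of both summands in the displayed formula above, and hence of $(h - h_{e^{-t}}(0))|_{B_0(e^{-t})}$. I expect the only minor technical point to be justifying the distributional pairing with $z\mapsto B_{-\log|z|}$; this is essentially routine since $B_{-\log|\cdot|}$ is a.s.\ locally integrable on $B_0(e^{-t})$ (it grows at most polylogarithmically along radial slices) and $H$ is canonically a distribution on $\BB C\setminus\{0\}$. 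The heart of the argument is simply the combination of the Brownian Markov property with the radial--lateral independence; no obstacle beyond invoking these standard inputs should arise.
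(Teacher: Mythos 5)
Your proposal is correct and is essentially the paper's proof: the ``radial--lateral'' splitting $h = B_{-\log|\cdot|} + H$ you invoke is exactly the paper's orthogonal decomposition $h = h^0 + h^\dagger$ of the GFF via $\mcl H = \mcl H^0 \oplus \mcl H^\dagger$ (so $h^0(z) = h_{|z|}(0) = B_{-\log|z|}$ and $H = h^\dagger$), and both arguments then combine the independent-increments property of the radial Brownian motion with the independence of the radial and lateral parts. One small point of wording: independence of the past increments from \emph{each} of the two summands $\int(B_{-\log|z|}-B_t)\phi\,dz$ and $(H,\phi)$ separately does not by itself give independence from their sum; you need (and do in fact have, since $B\perp H$ and the increments of $B$ are independent) independence from the \emph{pair}, which is the form in which the paper states this step.
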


Lemma~\ref{lem-gff-ind} has been used implicitly in several places in the literature, e.g., in the definition of the quantum cone in~\cite{wedges}.  
But, to our knowledge this fact is not explicitly stated as a lemma elsewhere, so we will give a proof. 

\begin{proof}[Proof of Lemma~\ref{lem-gff-ind}]
Let $\mcl H$ be the Hilbert space used to define $h$. That is, $\mcl H$ is the Hilbert space completion of the space of smooth functions $f$ on $\BB C$ whose average over $\bdy\BB D$ vanishes and such that $\int_{\BB C} |\nabla f(z)|^2 \,dz < \infty$, with respect to the Dirichlet inner product $(f,g)_\nabla = \int_{\BB C} \nabla f(z) \cdot \nabla g(z) \,dz$. If $\{f_j\}_{j\in\BB N}$ is an orthonormal basis for $\mcl H$, then we can write $h = \sum_{j=1}^\infty X_j f_j$, where the $X_j$'s are i.i.d.\ standard Gaussian random variables and the sum converges in the distributional sense. 

Let $\mcl H^0$ (resp.\ $\mcl H^\dagger$) be the subspace of $\mcl H$ consisting of functions which are constant on (resp.\ have mean zero on) every circle centered at 0. 
By~\cite[Lemma 4.9]{wedges}, $\mcl H$ is the orthogonal direct sum of $\mcl H^0$ and $\mcl H^\dagger$. Hence every function $f\in \mcl H$ can be expressed uniquely as the sum of a function $f^0 \in \mcl H^0$ and a function $f^\dagger \in \mcl H^\dagger$. A valid choice for $f^0$ and $f^\dagger$ (hence the only choice) is to take $f^0$ to be the function whose value on each circle centered at 0 is the average of $f$ over that circle, and to take $f^\dagger  = f - f^0$. 

We can choose our orthonormal basis for $\mcl H$ to be the disjoint union of an orthonormal basis for $\mcl H^0$ and an orthonormal basis for $\mcl H^\dagger$. 
The above description of the GFF then shows that $h = h^0 + h^\dagger$, where $h^0$ and $h^\dagger$ are independent, $h^0(z) = h_{|z|}(0)$ for each $z\in\BB C$, and $h^\dagger$ is the generalized function $h - h^0$. 

The process $s \mapsto h_{e^{-s}}(0)  $ is a standard two-sided Brownian motion~\cite[Section 3.1]{shef-kpz}. By the independent increments property of Brownian motion and the independence of $h^0$ and $h^\dagger$, it follows that $\{h_{e^{-s}}(0) - h_{e^{-t}}(0) : s \leq t \}$ is independent from the pair consisting of $h^\dagger$ and $\{h_{e^{-s}}(0) - h_{e^{-t}}(0) : s \geq t\}$. Since $(h-h_{e^{-t}}(0)) |_{B_0( e^{-t} )}$ is determined by $h^\dagger$ and $\{h_{e^{-s}}(0) - h_{e^{-t}}(0) : s \geq t\}$, we obtain the lemma statement. 
\end{proof}

\begin{proof}[Proof of Lemma~\ref{lem-cond-reg}]
By the locality and Weyl scaling properties of $D_h$ (Axioms~\ref{item-metric-local} and~\ref{item-metric-f}), the random variable $M_z(e^{-t})$ is a.s.\ determined by $h|_{B_z(e^{-t})}$, viewed modulo additive constant, so in particular it is determined by $(h  -h_{e^{-t}}(z)) |_{B_z(e^{-t})}$. 
By Lemma~\ref{lem-gff-ind} and the translation invariance of the law of $h$, viewed modulo additive constant, it follows that $M_z(e^{-t})$ is independent from $h_{e^{-t}}(z) - h_1(z)$. 

Consequently, it suffices to prove a tail bound for the unconditional law of $M_z(e^{-t})$. 
By the translation invariance of the law of $h$, viewed modulo additive constant, together with Lemma~\ref{lem-dist-tail},
\allb  \label{eqn-cond-reg-dist}
&\BB P\left[ \exp\left( -\xi h_{e^{-t}}(z) + \xi Q t \right) \max\left\{ D_h\left(\text{around $A_z^\circ(e^{-t})$}\right)  , D_h\left(\text{across $A_z^\parallel(e^{-t})$}\right) \right\} > S \right] \notag\\
&\qquad\qquad\qquad\qquad \leq a_0 e^{-a_1 (\log S)^2 / (\log\log S)^2}  
\alle
for constants $a_0,a_1 > 0$ depending only on $\xi$. Furthermore, by possibly increasing $a_0$ and decreasing $a_1$, we can arrange that also
\allb  \label{eqn-cond-reg-dist-lower}
 \BB P\left[ \exp\left( \xi h_{e^{-t}}(z) - \xi Q t \right)  D_h\left(\text{across $A_z^\parallel(e^{-t})$}\right) < S^{-1} \right]
  \leq a_0 e^{-a_1 (\log S)^2  }  .
\alle

To deal with the supremum of circle averages involved in the definition of $M_z(e^{-t})$, we first use the scale and translation invariance of the law of $h$, viewed modulo additive constant, to get
\eqb  \label{eqn-circle-max-law}
\sup_{r \in [ e^{-t-100}   , e^{-t} ]} \sup_{w\in B_z(e^{-t}-r )} |h_r(w) - h_{e^{-t}}(z)|  
\eqD \sup_{r \in [e^{-100} , 1]} \sup_{w\in B_0(1-r)}   |h_r(w) - h_1(0)| .
\eqe 
The process $\{h_r(w) - h_1(0) : r\in [e^{-100},1] , w\in B_0(1-r) \}$ is centered Gaussian and continuous. Furthermore, the variance of $h_r(w) - h_1(0)$ is bounded above by a universal constant for $r\in [e^{-100},1]$ and $ w\in B_{1-r}(0)$. 
By the Borell-TIS inequality~\cite{borell-tis1,borell-tis2} (see, e.g.,~\cite[Theorem 2.1.1]{adler-taylor-fields}), it follows that
\eqbn
\BB E\left[ \sup_{r \in [e^{-100} , 1]} \sup_{w\in B_0(1-r)} |h_r(w) - h_1(0)| \right] < \infty 
\eqen
is a finite, universal constant and there are universal constants $b_0 , b_1 > 0$ such that for each $S > 1$,
\eqb \label{eqn-circle-max-borell0}
\BB P\left[ \sup_{r \in [e^{-100} , 1]} \sup_{w\in B_0(1-r)} |h_r(w) - h_1(0)|  > \log S \right] \leq b_0 e^{-b_1 (\log S)^2} .
\eqe 
Note that here we absorbed the expectation of the supremum (which we know is a universal constant) into the constants $b_0$ and $b_1$. 
By~\eqref{eqn-circle-max-law} and \eqref{eqn-circle-max-borell0}, 
\eqb \label{eqn-circle-max-borell}
\BB P\left[ \sup_{r \in [ e^{-t-100}   , e^{-t} ]} \sup_{w\in B_z(e^{-t}-r )}   |h_r(w) - h_{e^{-t}}(z)|    > \log S \right] \leq b_0 e^{-b_1 (\log S)^2} .
\eqe 

Combining~\eqref{eqn-cond-reg-dist}, \eqref{eqn-cond-reg-dist-lower}, and~\eqref{eqn-circle-max-borell} shows that
\eqbn
\BB P[M_z(e^{-t}) > S] \leq c_0 e^{-c_1 (\log S)^2/(\log\log S)^2}
\eqen
for universal constants $c_0,c_1>0$. This together with the independence argument at the beginning of the proof gives~\eqref{eqn-cond-reg}.
\end{proof}

Using Lemma~\ref{lem-max-and-circle} and a union bound (over certain carefully chosen sets) we can get bounds for $M_z(e^{-t})$ which are uniform over all points in a bounded subset of $  e^{-t-100}\BB Z^2$.

\begin{lem} \label{lem-max-and-circle}
Fix a bounded open set $U\subset\BB C$, a number $\alpha > 0$, and a number $\zeta \in (0,1/2)$. 
Let $M_z(e^{-t})$ be as in Lemma~\ref{lem-cond-reg}. 
There are constants $  c_0,c_1 > 0$ depending only on $U,\alpha,\zeta$ such that for each $t  \geq 1$, it holds with probability at least $1 -  c_0 \exp\left( - c_1 (\log t)^{1+\zeta} \right)$ that the following is true. For each $ z\in (e^{-t-100} \BB Z^2) \cap U$ which satisfies $h_{e^{-t}}(z) - h_1(z) \leq 2 t  - \alpha \log t$, we have
\eqbn
M_z(e^{-t}) \leq \exp\left(   \left[ 2 t -  ( h_{e^{-t}}(z) - h_1(z) ) \right]^{1/2+\zeta} \right) .
\eqen
\end{lem}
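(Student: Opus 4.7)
The plan is to exploit the independence of $M_z(e^{-t})$ and $V_z := 2t - (h_{e^{-t}}(z) - h_1(z))$ established in the proof of Lemma~\ref{lem-cond-reg}, to bin the lattice points according to the integer part of $V_z$, and to balance for each bin the Gaussian small-ball probability for $V_z$ against the tail bound for $M_z(e^{-t})$ from Lemma~\ref{lem-cond-reg}. A union bound over the $O(e^{2t})$ lattice points and over the integer bins then yields the claimed probability estimate.

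Set $X_z := h_{e^{-t}}(z) - h_1(z)$, so the hypothesis reads $V_z \geq \alpha \log t$. Since $s \mapsto h_{e^{-s}}(z) - h_1(z)$ is a standard linear Brownian motion, $X_z \sim N(0,t)$ and hence $V_z \sim N(2t, t)$. A direct calculation with the Gaussian density gives
\eqbn
p_k := \BB P[V_z \in [k, k+1)] \leq \frac{C}{\sqrt{t}}\, e^{-(2t-k)^2/(2t)} = \frac{C}{\sqrt{t}}\, \exp\!\left(-2t + 2k - \frac{k^2}{2t}\right), \qquad 0 \le k \le 2t,
\eqen
and $p_k \le \exp(-(k-2t)^2/(2t))$ for $k \ge 2t$. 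On the event $\{V_z \in [k, k+1)\}$ we have $\exp(V_z^{1/2+\zeta}) \geq \exp(k^{1/2+\zeta})$, so applying Lemma~\ref{lem-cond-reg} with $S = \exp(k^{1/2+\zeta})$ (so that $(\log S)^2/(\log\log S)^2 \asymp k^{1+2\zeta}/(\log k)^2$) and using that $M_z(e^{-t})$ is independent of $V_z$ yields
\eqbn
\BB P\!\left[V_z \in [k, k{+}1),\ M_z(e^{-t}) > \exp(V_z^{1/2+\zeta})\right]
\le p_k \cdot c_0 \exp\!\left(-c_1 \frac{k^{1+2\zeta}}{(\log k)^2}\right).
\eqen

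Now I would union bound over the $N \le C' e^{2t}$ points $z \in (e^{-t-100}\BB Z^2) \cap U$ and over integers $k \geq \lceil \alpha \log t\rceil$. For $k \le 2t$ the factor $e^{2t}$ from the lattice cancels the $e^{-2t}$ in $p_k$, producing a per-bin contribution bounded by $(C''/\sqrt{t}) \exp(2k - c_1 k^{1+2\zeta}/(\log k)^2)$. Since $\zeta > 0$, $k^{2\zeta}/(\log k)^2 \to \infty$, so for $t$ large and every $k \geq \alpha \log t$ the $k^{1+2\zeta}$ term dominates $2k$ and the per-bin bound becomes $\exp(-c_1' k^{1+2\zeta}/(\log k)^2)$ times a polynomial in $t$. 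The dominant term is $k = \lceil \alpha\log t\rceil$, giving
\eqbn
\exp\!\left(-c_1' \frac{(\alpha \log t)^{1+2\zeta}}{(\log\log t + O(1))^2}\right) \leq \exp\!\left(-c_1''\,(\log t)^{1+\zeta}\right),
\eqen
the last inequality holding for $t$ large because $(\log t)^{\zeta}$ dominates $(\log\log t)^2$. The tail $k \geq 2t$ is handled identically, using that for $k \ge 2t$, $k^{1+2\zeta}/(\log k)^2 \gg t$, which absorbs both the $e^{2t}$ from the lattice and the Gaussian tail $e^{-(k-2t)^2/(2t)}$. The remaining sum over $k$ is super-geometrically convergent and is dominated by its first term, yielding the bound $c_0 \exp(-c_1 (\log t)^{1+\zeta})$.

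The main delicate point is that Lemma~\ref{lem-cond-reg} falls short of a genuine lognormal bound by the $(\log\log S)^2$ factor in its exponent, while the lattice union bound forces us to absorb the factor $e^{2k}$. The argument only closes because the exponent $1/2+\zeta$ carries a strictly positive margin $\zeta$: without it, $k^{1+2\zeta}/(\log k)^2$ would not beat $2k$ at $k \asymp \log t$, and at the same time it is precisely this margin that converts the raw rate $(\log t)^{1+2\zeta}/(\log\log t)^2$ produced by the estimate into the cleaner target rate $(\log t)^{1+\zeta}$ in the conclusion.
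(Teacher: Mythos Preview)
Your proposal is correct and follows essentially the same approach as the paper: bin the lattice points by the integer part of $V_z = 2t - (h_{e^{-t}}(z) - h_1(z))$, combine the Gaussian tail for $V_z$ with the conditional tail bound for $M_z(e^{-t})$ from Lemma~\ref{lem-cond-reg} via independence, and then union bound over the $O(e^{2t})$ lattice points and over the bins, with the dominant contribution coming from $k \approx \alpha \log t$. The only cosmetic differences are that the paper lumps all $k \geq 2t$ into a single bin and drops the $-k^2/(2t)$ and $1/\sqrt t$ refinements in the Gaussian bound, but the arithmetic closes in exactly the same way.
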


When we apply Lemma~\ref{lem-max-and-circle}, we will take $\alpha  \in (0,1/4)$, so that by Proposition~\ref{prop-gff-max} a.s.\ for each large enough $n\in\BB N$, the constraint $h_{e^{-n}}(z) - h_1(z) \leq 2 n  - \alpha \log n$ is satisfied for all $z\in (e^{-n-100} \BB Z^2) \cap U$. 

\begin{proof}[Proof of Lemma~\ref{lem-max-and-circle}]
\noindent\textit{Step 1: partitioning the set of ``bad" points.}
We will break up the points $z\in (e^{-t-100} \BB Z^2) \cap U$ based on the value of $ 2 t - (h_{e^{-t}}(z) - h_1(z)) $. 
Let
\eqb  \label{eqn-max-circle-n}
k_t := \lfloor  \alpha  \log  t   \rfloor   \quad \text{and} \quad K_t := \lceil  2 t \rceil  .
\eqe 
We note that if 
\eqbn
0 \leq h_{e^{-t}}(z) - h_1(z) \leq 2 t - \alpha \log t ,
\eqen
then $ 2 t - (h_{e^{-t}}(z) - h_1(z))$ belongs to $[k,k+1]$ for some $k \in [k_t , K_t - 1]_{\BB Z}$. 

For $k \in [k_t,K_t-1]_{\BB Z}  $, let $Z_t^k$ be the set of $z\in(e^{-t-100} \BB Z^2) \cap U$ such that 
\eqbn
 M_z(e^{-t})  >  \exp\left(  \left[ 2 t -  ( h_{e^{-t}}(z) - h_1(z) ) \right]^{1/2+\zeta}  \right)   \quad \text{and} \quad
  2 t -  (h_{e^{-t}}(z) - h_1(z)) \in [k ,k+1]
\eqen
Also let $Z_t^{K_t}$ be the set of $z\in(e^{-t-100} \BB Z^2) \cap U$ such that 
\eqb \label{eqn-max-circle-neg}
 M_z(e^{-t})  >  \exp\left(  \left[ 2 t -  ( h_{e^{-t}}(z) - h_1(z) ) \right]^{1/2+\zeta}  \right)  \quad \text{and} \quad
     h_{e^{-t}}(z) - h_1(z)  \leq 0 .
\eqe 

We want to show that
\eqb \label{eqn-max-and-circle-show0}
\BB P\left[ \bigcup_{k=k_t}^{K_t} Z_t^k = \emptyset\right] \geq 1 - c_0 \exp\left( - c_1 (\log t)^{1+\zeta} \right)
\eqe 
for constants $c_0,c_1 > 0$ as in the lemma statement. 
We will prove the lemma by showing that
\eqb \label{eqn-max-and-circle-show}
\BB E\left[ \sum_{k=k_t}^{K_t}  \# Z_t^k \right] \leq  c_0 \exp\left( - c_1 (\log t)^{1+\zeta} \right) 
\eqe 
which immediately implies~\eqref{eqn-max-and-circle-show0} via Markov's inequality. 

Let us now prove~\eqref{eqn-max-and-circle-show}. 
Throughout the rest of the proof, we let $c_0$ and $c_1$ denote constants which depend only on $U,\alpha,\zeta$ and which may change from line to line. 
\medskip

\noindent\textit{Step 2: estimates for $M_z(e^{-t})$ and $ h_{e^{-t}}(z) -h_1(z) $.}
By Lemma~\ref{lem-cond-reg}, for each $z \in \BB C$ and each $t  \geq 0$, a.s.\ 
\eqb  \label{eqn-use-cond-reg}
\BB P\left[ M_z(e^{-t}) > S \,|\, h_{e^{-t}}(z) - h_1(z) \right] \leq c_0  e^{-c_1  (\log S)^2/(\log\log S)^2}  , \quad\forall S > 2 .
\eqe 
Hence, for any $k \in \BB N$ and any $x\in [k , k+1]$, 
\allb \label{eqn-max-exp-tail}
\BB P\left[ M_z(e^{-t}) >  \exp\left(  x^{1/2+\zeta} \right) \,|\, 2 t -  (h_{e^{-t}}(z) - h_1(z)) = x\right] 
&\leq c_0  \exp\left( - c_1  \frac{x^{1+ 2\zeta}}{\log x} \right)  \notag\\
&\leq c_0  \exp\left( - c_1  \frac{k^{1+2\zeta}}{\log k} \right)  .
\alle 

On the other hand, the random variable $h_{e^{-t}}(z) - h_1(z)$ is centered Gaussian with variance $t$, so if $k \in [k_t, K_t -1]_{\BB Z}$, then
\allb \label{eqn-circle-tail}
\BB P\left[ 2 t -  (h_{e^{-t}}(z) - h_1(z)) \leq k + 1 \right]
&= \BB P\left[  h_{e^{-t}}(z) - h_1(z) \geq 2 t - (k+1)  \right] \notag\\
&\leq \exp\left( - \frac{(2 t - k-1)^2}{ 2t} \right) \notag\\
&\leq c_0   \exp\left( - 2t +  2 (k+1) \right) ,
\alle 
where in the last line we dropped the term $-(k+1)^2/(2t) $ inside the exponential.
\medskip

\noindent\textit{Step 3: estimates for $\#Z_t^k$.}
Combining~\eqref{eqn-max-exp-tail} with~\eqref{eqn-circle-tail} shows that for $z\in (e^{-t-100} \BB Z^2) \cap U$ and $k \in [k_t,K_t -1]_{\BB Z}$, 
\eqb  \label{eqn-max-circle-pt}
\BB P\left[ z\in Z_t^k \right] 
\leq c_0  \exp\left( - 2t +  2 (k+1)  - c_1 \frac{ k^{1+2\zeta}}{\log k}  \right) 
\leq c_0  \exp\left(  -2t -  c_1   k^{1+ \zeta}   \right)
\eqe 
where in the second inequality we used that $\log k$ is bounded above by a $\zeta$-dependent constant times $k^\zeta$.
To treat the case of $Z_t^{K_t}$, as defined in~\eqref{eqn-max-circle-neg}, we apply~\eqref{eqn-use-cond-reg} with $S = (2t)^{1/2+\zeta}$ and take unconditional expectations of both sides to get
\eqb  \label{eqn-max-circle-pt-neg}
\BB P\left[ z\in Z_t^{K_t} \right] \leq c_0 \exp\left( - c_1 \frac{ t^{1+2\zeta}}{\log t} \right) \leq c_0 \exp\left( - 2t -  c_1   K_t^{1+ \zeta}  \right)
\eqe 
where in the second inequality we used that $K_t = \lfloor 2t \rfloor$, so $K_t^{1+\zeta}$ is much bigger than $2t$. 
Hence~\eqref{eqn-max-circle-pt} also holds for $k=K_t$. 

By~\eqref{eqn-max-circle-pt} and~\eqref{eqn-max-circle-pt-neg} and a union bound over $O_t(e^{2t})$ points in $(e^{-t-100} \BB Z^2)\cap U$, we now obtain
\eqbn
\BB E\left[  \# Z_t^k \right] \leq   c_0   \exp\left( -  c_1   k^{1+ \zeta}   \right)  ,\quad \forall k \in [k_t,K_t ]_{\BB Z}.
\eqen
Hence
\alb
\BB E\left[ \sum_{k=k_t}^{K_t} \# Z_t^k \right]
&\leq c_0 \sum_{k = k_t}^{K_t}     \exp\left( -  c_1   k^{1+ \zeta}   \right)  \\
&\leq c_0   \exp\left( -  c_1   k_t^{1+ \zeta}   \right) \\
&\leq c_0 \exp\left( - c_1 (\log t)^{1+\zeta} \right)  ,
\ale
where in the last line we used the definition of $k_t$ from~\eqref{eqn-max-circle-n}. This gives~\eqref{eqn-max-and-circle-show}, which (as explained above) concludes the proof.
\end{proof}

\begin{proof}[Proof of Lemma~\ref{lem-all-scales}]
The lemma is an easy consequence of Lemma~\ref{lem-max-and-circle}, Proposition~\ref{prop-gff-max}, and the Borel-Cantelli lemma. 
We start with some preliminary estimates.
Since $U$ is bounded and $z\mapsto h_1(z)$ is continuous, a.s.\ 
\eqb \label{eqn-use-h_1-cont}
C_0 := \sup_{z\in U} |h_1(z)|  < \infty .
\eqe 
With a view toward applying Proposition~\ref{prop-gff-max}, let $\alpha < \alpha' <  1/4$. 
By Proposition~\ref{prop-gff-max} with $\alpha'$ in place of $\alpha$, it is a.s.\ the case that for each large enough $n\in\BB N$ (random), we have
\eqbn
|h_{e^{-n}}(z)| \leq 2 n - \alpha' \log n ,  \quad \forall z \in (e^{-n-100} \BB Z^2) \cap U .
\eqen
We have $(\alpha' -\alpha )\log n > C_0$ for each large enough $n\in\BB N$, so a.s.\ for each large enough $n\in\BB N$,  
\eqb \label{eqn-use-gff-max}
|h_{e^{-n}}(z)| \leq 2 n - \alpha  \log n  - C_0  ,  \quad \forall z \in (e^{-n-100} \BB Z^2) \cap U . 
\eqe  
 
For $n\in\BB N$, let $E_n$ be the event of Lemma~\ref{lem-max-and-circle} with $t = n$, i.e., $E_n$ is the event that for each $ z\in (e^{-n-100} \BB Z^2) \cap U$ with $h_{e^{-n}}(z) - h_1(z) \leq 2 n - \alpha \log n$, we have
\eqbn
M_z(e^{-n}) \leq \exp\left(   \left[ 2 n -  (h_{e^{-n}}(z) - h_1(z)) \right]^{1/2+\zeta} \right)  .
\eqen 
By Lemma~\ref{lem-max-and-circle}, there are constants $  c_0,c_1 > 0$ depending only on $U,\alpha,\zeta$ such that for each $n\in\BB N$, 
\eqbn
\BB P[E_n] \geq 1 -  c_0 \exp\left( - c_1 (\log n)^{1+\zeta} \right) .
\eqen
The quantity $c_0 \exp\left( - c_1 (\log n)^{1+\zeta} \right)$ is summable, so the Borel-Cantelli lemma implies that a.s.\ $E_n$ occurs for each large enough $n\in\BB N$.

Now assume that $n$ is large enough so that $E_n$ occurs and~\eqref{eqn-use-gff-max} holds. By~\eqref{eqn-use-h_1-cont} and~\eqref{eqn-use-gff-max}, for each $z\in (e^{-n-100} \BB Z^2) \cap U$,  
\alb
h_{e^{-n}}(z) - h_1(z) 
 \leq  (2 n - \alpha  \log n  - C_0) + C_0 \leq 2n-\alpha \log n .
\ale
Hence the condition in the definition of $E_n$ holds for every $z\in (e^{-n-100}\BB Z^2)\cap U$, so for every such $z$,
\alb
M_z(e^{-n})
&\leq \exp\left(   \left[ 2 n -  (h_{e^{-n}}(z) - h_1(z)) \right]^{1/2+\zeta} \right) \\
&\leq \exp\left(   \left[ 2 n -   h_{e^{-n}}(z)  + C_0 \right]^{1/2+\zeta} \right) \quad \text{(by~\eqref{eqn-use-h_1-cont})} \\
&\leq e^{C_0^{1/2+\zeta}} \exp\left( \left[2n - h_{e^{-n}}(z)\right]^{1/2+\zeta}\right) 
\ale
where in the last line, we used that $x\mapsto x^{1/2+\zeta}$ is concave, hence subadditive. This shows that a.s.~\eqref{eqn-all-scales} holds with $C=e^{C_0^{1/2+\zeta}}$ for each sufficiently large $n\in\BB N$. By increasing $C$, we can arrange that in fact~\eqref{eqn-all-scales} holds for all $n\in\BB N$. 
\end{proof}

\subsection{Stringing together paths at different scales}
\label{sec-dist-to-int}

Fix a bounded open set $U\subset\BB C$ and a parameter $\zeta \in (0,1/2)$.
In this subsection, we will build  on Lemma~\ref{lem-all-scales} to prove the following estimate.

\begin{lem} \label{lem-dist-to-int}
Let $\alpha \in (0,1/4)$. Almost surely, there exists a random $C \in (0,\infty)$ such that the following is true for each $z\in U$. 
\begin{enumerate}
\item We have
\eqb \label{eqn-dist-to-int-circle}
h_{e^{-t}}(z) \leq 2t - \alpha \log t + C ,\quad \forall t \geq 0 .
\eqe
\item For each $n,m\in\BB N$ with $m < n$, 
\allb \label{eqn-dist-to-int}
&D_h\left( \bdy B_z( e^{-n-50} ) , \bdy  B_z(e^{-m} )    \right)  \notag\\
&\qquad\qquad \leq C \sum_{k=m }^{n-1} \min_{t\in [k,k+1]}  \exp\left( \xi h_{e^{-t}}(z) -    \xi Q t   +   (2+2\xi) \left| 2 t - h_{e^{-t}}(z)  \right|^{1/2+\zeta}     \right)  .
\alle 
\item For each $n\in\BB N$, 
\allb \label{eqn-dist-to-int-around}
&D_h\left(\text{around $\BB A_z( e^{-n-1} ,  e^{-n} )$} \right) \notag\\
&\qquad\qquad \leq C \min_{t\in [n+1,n+2]}  \exp\left( \xi h_{e^{-t}}(z) -    \xi Q t   +   (2+2\xi)  \left| 2 t - h_{e^{-t}}(z)  \right|^{1/2+\zeta}     \right) .
\alle
\end{enumerate}
\end{lem}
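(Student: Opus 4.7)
The plan is to bootstrap the lattice/integer-scale estimate from Lemma~\ref{lem-all-scales} to a statement for arbitrary $z \in U$ and $t \geq 0$, using the circle-average continuity encoded in item~4 of Definition~\ref{def-triple-max}, and then to chain together short-range distance bounds via the around/across annuli $A_{z'}^\circ(e^{-n'})$ and $A_{z'}^\parallel(e^{-n'})$ at suitable lattice points and integer scales. Fix auxiliary parameters $\zeta' \in (0,\zeta)$ and $\alpha' \in (\alpha,1/4)$. Applying Lemma~\ref{lem-all-scales} with $\zeta'$ and Proposition~\ref{prop-gff-max} with $\alpha'$, there is a.s.\ a single random $C_0 > 0$ such that, for every $n \in \BB N$ and every $z' \in (e^{-n-100}\BB Z^2) \cap U$,
\eqbn
M_{z'}(e^{-n}) \leq C_0 \exp\bigl(|2n - h_{e^{-n}}(z')|^{1/2+\zeta'}\bigr) \quad \text{and} \quad |h_{e^{-n}}(z')| \leq 2n - \alpha' \log n + C_0.
\eqen

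The key bridging step: given $z \in U$ and $t \in (n, n+1]$ with $n = \lfloor t \rfloor$, pick a nearest lattice point $z_n \in (e^{-n-100}\BB Z^2) \cap U$; then $|z - z_n| \leq \sqrt 2\, e^{-n-100} \ll e^{-n} - e^{-t}$, so item~4 of Definition~\ref{def-triple-max} applies with $r = e^{-t}$, $w = z$, giving $|h_{e^{-t}}(z) - h_{e^{-n}}(z_n)| \leq \log M_{z_n}(e^{-n}) \leq \log C_0 + |2n - h_{e^{-n}}(z_n)|^{1/2+\zeta'}$. The triangle inequality then yields the self-referential bound $|2n - h_{e^{-n}}(z_n)| \leq |2t - h_{e^{-t}}(z)| + O(1) + |2n - h_{e^{-n}}(z_n)|^{1/2+\zeta'}$; since $1/2 + \zeta' < 1$, this forces $|2n - h_{e^{-n}}(z_n)| \leq 2|2t - h_{e^{-t}}(z)| + O(1)$, hence $\log M_{z_n}(e^{-n}) \leq 2|2t - h_{e^{-t}}(z)|^{1/2+\zeta} + O(1)$ (using $\zeta' < \zeta$ to absorb the constant $2^{1/2+\zeta'} < 2$). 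Part~(1) is then immediate: $h_{e^{-t}}(z) \leq h_{e^{-n}}(z_n) + \log M_{z_n}(e^{-n}) \leq 2n - \alpha' \log n + O((\log n)^{1/2+\zeta'})$, which is bounded by $2t - \alpha \log t + C$ for large $t$ since $(\alpha' - \alpha) \log t$ dominates $(\log t)^{1/2+\zeta'}$, and the boundary cases $t = n$ and small $t$ are absorbed into $C$.

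For parts~(2) and~(3): the around-distance for $\BB A_z(e^{-n-1}, e^{-n})$ is bounded by the around-distance for a suitably positioned $A_{z_{n^*}}^\circ(e^{-n^*})$ (whose disconnecting loop also serves as an around-loop for the bigger annulus), which by the definition of $M$ is at most $M_{z_{n^*}}(e^{-n^*}) \exp(\xi h_{e^{-n^*}}(z_{n^*}) - \xi Q n^*)$. Converting to the reference scale $(z, t)$ via the bridging step, the shift of $\xi h$ costs $\xi \log M \leq 2\xi |2t - h_{e^{-t}}(z)|^{1/2+\zeta} + O(1)$, and the prefactor $M$ itself contributes $\leq 2|2t - h_{e^{-t}}(z)|^{1/2+\zeta} + O(1)$; together these produce the $(2+2\xi)$ coefficient of part~(3). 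Part~(2) follows by chaining: for each $k \in [m, n-1]_{\BB Z}$, use the across-distance bound for $A_{z_{n_k}}^\parallel(e^{-n_k})$ at an integer $n_k$ close to a minimizing $t_k \in [k, k+1]$; because $A^\parallel$ has radial ratio $e^{100}$, the endpoint $\partial B_{z_{n_k}}(e^{-t_k - 100})$ of the $k$-th path lies inside the annulus of the $(k+1)$-st path, so consecutive across-paths can be concatenated, and the last path ($k=n-1$) descends all the way past $\partial B_z(e^{-n-50})$. Applying the same exponent conversion to each term and summing gives the bound.

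The main obstacle is the self-referential step underlying the bridging estimate: it is precisely this step that forces the factor~$2$ doubling of $|2t - h_{e^{-t}}(z)|$ and hence the $(2+2\xi)$ prefactor. It requires $|2t - h_{e^{-t}}(z)|$ to be large enough for $x \leq a + x^{1/2+\zeta'}$ to imply $x \leq 2a$; the regime where this is small is handled by the uniform bound $M_{z_n}(e^{-n}) \leq C_0 \exp(O(1))$ coming from Proposition~\ref{prop-gff-max} (so $|2n - h_{e^{-n}}(z_n)|$ is automatically $O(\log n)$ in the relevant range). A secondary, more geometric difficulty is verifying that the chosen small annulus $A_{z_{n^*}}^\circ$ is positioned so that its disconnecting loop genuinely serves as an around-loop for $\BB A_z(e^{-n-1}, e^{-n})$, and that the chain of across-paths in part~(2) correctly covers the radial range from $\partial B_z(e^{-m})$ down past $\partial B_z(e^{-n-50})$ with consecutive overlapping links.
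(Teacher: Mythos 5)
Your approach matches the paper's in essence: you re-derive inline what the paper isolates as Lemma~\ref{lem-shift-bound}, namely using item~4 of Definition~\ref{def-triple-max} to transport circle averages from the lattice point $z_n$ at integer scale $n$ to the arbitrary point $z$ at real scale $t$, and you resolve the resulting self-referential inequality via the subadditivity/sublinearity of $x\mapsto x^{1/2+\zeta'}$ exactly as the paper does. Your bookkeeping of how the shift contributes a factor $2\xi$ (from $\xi h_{e^{-n}}(z_n)\to \xi h_{e^{-t}}(z)$) plus a factor $2$ (from the $M$-prefactor), giving $(2+2\xi)$, is the same accounting the paper performs in Step~2 of its proof. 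Parts~(1) and~(3) are handled correctly.

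However, there is a genuine gap in your chaining argument for part~(2). You write that ``because $A^\parallel$ has radial ratio $e^{100}$, the endpoint $\partial B_{z_{n_k}}(e^{-t_k-100})$ of the $k$-th path lies inside the annulus of the $(k+1)$-st path, so consecutive across-paths can be concatenated.'' This inference is false: a path having an endpoint inside an annulus that another path also passes through does not mean the two paths touch. Two radial crossings of overlapping annuli can easily be disjoint (e.g.\ one along the positive real axis, the other along the negative real axis). To actually force intersections, you need a disconnecting loop. This is precisely why Definition~\ref{def-triple-max} tracks \emph{both} the around-distance of $A_z^\circ(e^{-t})$ and the across-distance of $A_z^\parallel(e^{-t})$: the paper chains $P^\parallel_k(z_k)$ together with the disconnecting loops $P^\circ_k(z_k)$, observing that both $A_{z_k}^\circ(e^{-k})$ and $A_{z_{k+1}}^\circ(e^{-k-1})$ separate the inner and outer boundaries of $\BB A_{z_k}(e^{-k-100},e^{-k})$, and hence $P_k^\parallel(z_k)$ must cross both $P_k^\circ(z_k)$ and $P_{k+1}^\circ(z_{k+1})$. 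These loops are the topological ``glue'' that makes the union of paths connected, and they are what costs the extra summand per scale. Your sketch acknowledges the geometric difficulty at the end, but the fix is not ``overlapping links'' --- it is an explicit use of the around-loops from your own part~(3) to witness the intersections. Once that is added, the argument closes.
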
 
 
The estimate~\eqref{eqn-dist-to-int} is of fundamental importance for our argument since it allows us to bound distances between different scales in terms of the circle average process $t\mapsto h_{e^{-t}}(z)$, which is a linear Brownian motion so can be easily estimated. 
The estimate~\eqref{eqn-dist-to-int-around} will be used to ``link up" paths between pairs of concentric circles with possibly different center points. 

To begin the proof of Lemma~\ref{lem-dist-to-int}, let us first record what we get from Lemma~\ref{lem-all-scales}.
By Lemma~\ref{lem-all-scales}, a.s.\ there exists a random $C_0  \in (0,\infty)$ such that~\eqref{eqn-all-scales} holds with $C_0$ in place of $C$ and the Eucldiean 1-neighborhood $B_1(U)$ in place of $U$. 
By the definition of $M_z(e^{-n})$ from Definition~\ref{def-triple-max}, a.s.\ the following is true for each $n\in\BB N$ and each $z\in (e^{-n-100}\BB Z^2)\cap B_1(U)$. 
\begin{enumerate}[A.]
\item \label{item-dist-sum-around} There is a path $P^\circ_n(z)$ in the annulus $A_z^\circ(e^{-n}) = \BB A_z(e^{-n-51/100} , e^{-n-1/2} )$ which disconnects the inner and outer boundaries of this annulus and has $D_h$-length at most 
\eqb \label{eqn-dist-sum-around}
\exp\left( \xi h_{e^{-n}}(z)  - \xi Q n \right) M_z(e^{-n}) 
\leq C_0 \exp\left( \xi h_{e^{-n}}(z) - \xi Q n  +  \left| 2n - h_{e^{-n}}(z)\right|^{1/2+\zeta}\right) .
\eqe 
\item \label{item-dist-sum-across} There is a path $P^\parallel_n(z)$ between the inner and outer boundaries of $A_z^\parallel(e^{-n})$, i.e., between the circles $\bdy B_z(e^{-n-100} )$ and $\bdy B_z( e^{-n} )$, whose $D_h$-length is at most the right side of~\eqref{eqn-dist-sum-around}.  
\item \label{item-dist-sum-circle} We have 
\eqbn
\sup_{r \in [e^{-n-100} , e^{-n} ]} \sup_{w\in B_z( e^{-n} - r  )}  |h_r(w) - h_{e^{-n}}(z)|   
\leq \log M_z(e^{-n}) 
\leq  \left| 2  n - h_{e^{-n}}(z)  \right|^{1/2+\zeta}  + \log C_0 .
\eqen
\end{enumerate}

With $\alpha \in (0,1/4)$ as in Lemma~\ref{lem-dist-to-int}, let $\alpha_0 \in (\alpha,1/4)$. 
By Proposition~\ref{prop-gff-max}, after possibly replacing $C_0$ by a larger random variable we can arrange that a.s.\
\eqb \label{eqn-use-gff-max'}
|h_{e^{-n}}(z)| \leq 2 n - \alpha_0   \log n + \log C_0   ,\quad \forall n\in\BB N, \quad \forall z \in (e^{-n-100} \BB Z^2) \cap U  .
\eqe  

The above estimates only apply to points in $(e^{-n-100}\BB Z^2) \cap U$. The following lemma, which is a straightforward consequence of condition~\ref{item-dist-sum-circle} and~\eqref{eqn-use-gff-max'}, will allow us to extend our estimates to a general choice of $z\in U$.

\begin{lem} \label{lem-shift-bound}
Almost surely, there exists a random $C_1 \in (0,\infty)$ such that the following is true for each $z\in U$. 
For $n\in\BB N$, let $z_n \in (e^{-n-100}\BB Z^2) \cap B_1(U)$ be chosen so that 
\eqb \label{eqn-shift-pt}
z\in B_{z_n}(  e^{-n-99} ) .
\eqe 
For each $t\in [n+1,n+2]$, 
\eqb \label{eqn-shift-bound}
h_{e^{-t}}(z)
\leq h_{e^{-n}}(z_n) +   2  \left| 2t - h_{e^{-t}}(z) \right|^{1/2+\zeta}  + C_1  
\eqe
and 
\eqb\label{eqn-shift-error}
\left| 2n - h_{e^{-n}}(z_n )\right|^{1/2+\zeta} \leq 2  \left| 2t - h_{e^{-t}}(z) \right|^{1/2+\zeta}  +  C_1 .
\eqe
\end{lem}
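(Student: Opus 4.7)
The plan is to derive both estimates by applying condition~\ref{item-dist-sum-circle} (which was recorded just before the lemma) at the lattice point $z_n$, and then using a simple algebraic manipulation to trade between the ``error'' $|2n-h_{e^{-n}}(z_n)|^{1/2+\zeta}$ attached to the lattice point and the analogous error $|2t-h_{e^{-t}}(z)|^{1/2+\zeta}$ attached to $z$ itself.

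First I would check that condition~\ref{item-dist-sum-circle} applies to $z_n$ at the relevant scales. Since $t\in[n+1,n+2]$, we have $e^{-t}\in[e^{-n-2},e^{-n-1}]\subset[e^{-n-100},e^{-n}]$. Moreover, $e^{-n}-e^{-t}\geq e^{-n}(1-e^{-1})$, which is much larger than $e^{-n-99}$, so the hypothesis $z\in B_{z_n}(e^{-n-99})$ implies $z\in B_{z_n}(e^{-n}-e^{-t})$. A valid choice of $z_n$ exists because the cell diameter $\sqrt{2}\,e^{-n-100}$ of the lattice $e^{-n-100}\BB Z^2$ is smaller than $e^{-n-99}$. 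Consequently, condition~\ref{item-dist-sum-circle} (applied to the 1-neighborhood $B_1(U)$, as already arranged) yields
\eqb \label{eqn-shift-circle}
|h_{e^{-t}}(z)-h_{e^{-n}}(z_n)| \leq |2n-h_{e^{-n}}(z_n)|^{1/2+\zeta} + \log C_0 .
\eqe

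Next I would translate between the two errors. Set $x := |2n-h_{e^{-n}}(z_n)|$ and $y := |2t-h_{e^{-t}}(z)|$. Since $|2n-2t|\leq 4$, the triangle inequality and~\eqref{eqn-shift-circle} give
\eqbn
x \leq 4 + y + x^{1/2+\zeta} + \log C_0 \quad\text{and}\quad y \leq 4 + x + x^{1/2+\zeta} + \log C_0 .
\eqen
From the first inequality, either $x$ is bounded by a deterministic constant (depending on $C_0$), in which case $x^{1/2+\zeta}$ is bounded; or else $x$ is large enough that $x^{1/2+\zeta}\leq x/2$, in which case $x\leq 2y + 8 + 2\log C_0$. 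Taking the $(1/2+\zeta)$-th power and using the subadditivity of $u\mapsto u^{1/2+\zeta}$ yields~\eqref{eqn-shift-error} for a suitable $C_1$ depending only on $C_0$ and $\zeta$. The bound~\eqref{eqn-shift-bound} then follows immediately by combining~\eqref{eqn-shift-circle} with~\eqref{eqn-shift-error} to replace $x^{1/2+\zeta}$ by $2y^{1/2+\zeta}+C_1$ on the right.

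There is no real obstacle here; the only mild subtlety is the bootstrapping needed to swap the roles of $x$ and $y$, and ensuring the constants depend only on $C_0$ and $\zeta$ (so that $C_1$ is a single random constant, valid uniformly in $n$, $t$, and $z$). Once condition~\ref{item-dist-sum-circle} is in place for $z_n$, everything else is elementary arithmetic on positive reals.
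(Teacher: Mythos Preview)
Your argument is correct and follows the same outline as the paper: apply condition~\ref{item-dist-sum-circle} at $z_n$ with $r=e^{-t}$, $w=z$ to get~\eqref{eqn-shift-circle}, then bootstrap to pass from the error $x^{1/2+\zeta}$ at $z_n$ to the error $y^{1/2+\zeta}$ at $z$, and finally feed~\eqref{eqn-shift-error} back into~\eqref{eqn-shift-circle} to obtain~\eqref{eqn-shift-bound}.

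The one place where you diverge from the paper is the bootstrap itself. The paper first gets $x^{1/2+\zeta}\leq |2n-h_{e^{-t}}(z)|^{1/2+\zeta}+x^{(1/2+\zeta)^2}+|\log C_0|^{1/2+\zeta}$ by subadditivity, and then invokes the GFF maximum bound~\eqref{eqn-use-gff-max'} to guarantee that $x=|2n-h_{e^{-n}}(z_n)|$ is at least $2^{1/(1/2+\zeta)}$ for all $n$ larger than some random $n_*$, which forces $x^{(1/2+\zeta)^2}\leq \tfrac12 x^{1/2+\zeta}$ and allows absorption. Small $n$ are then handled separately via continuity of the circle average process. Your purely algebraic dichotomy (either $x$ lies below the deterministic threshold $2^{1/(1/2-\zeta)}$, or $x^{1/2+\zeta}\leq x/2$ and one can absorb) accomplishes the same thing without any appeal to the GFF maximum or to a separate small-$n$ argument, which is a genuine simplification. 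One cosmetic point: the threshold in your Case~1 depends only on $\zeta$, not on $C_0$; the dependence on the random $C_0$ enters only through the additive constant $8+2|\log C_0|$ in Case~2.
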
 
\begin{proof} 
Throughout the proof, all statements are required to hold a.s.\ for every possible choice of $z,n,$ and $t$. 
Let $n\in\BB N$ and $t\in [n+1,n+2]$. 
By condition~\ref{item-dist-sum-circle} above applied with $r = e^{-t}$, $z_n$ in place of $z$, and $z$ in place of $w$, we get that for each $t\in [n+1,n+2]$, 
\eqb  \label{eqn-shift-center}
|h_{e^{-t}}(z) - h_{e^{-n}}(z_n)|  \leq  \left| 2  n - h_{e^{-n}}(z_n)  \right|^{1/2+\zeta}  + \log C_0 .
\eqe 
Consequently, we can use the triangle inequality followed by the subadditivity of $x\mapsto x^{1/2+\zeta}$ to get
\allb  \label{eqn-shift-error0}
\left| 2n - h_{e^{-n}}(z_n )\right|^{1/2+\zeta}
&\leq \left|\: |2n - h_{e^{-t}}(z)|   +        \left| 2  n - h_{e^{-n}}(z_n)  \right|^{1/2+\zeta}        + \log C_0         \right|^{1/2+\zeta} \notag\\
&\leq \left| 2n - h_{e^{-t}}(z) \right|^{1/2+\zeta}   +      \left| 2  n - h_{e^{-n}}(z_n)  \right|^{(1/2+\zeta)^2}        + |\log C_0|^{1/2+\zeta} .
\alle
By~\eqref{eqn-use-gff-max'}, there is a random $n_* \in \BB N$ which does not depend on $z$ such that for $n\geq n_*$, we have $2n - h_{e^{-n}}(z_n) \geq \alpha_0 \log n - \log C_0 \geq  2^{ 1/(1/2+\zeta)}$, which implies that
\eqb \label{eqn-shift-error-half}
\left| 2  n - h_{e^{-n}}(z_n)  \right|^{(1/2+\zeta)^2}  \leq \frac12 \left| 2  n - h_{e^{-n}}(z_n)  \right|^{ 1/2+\zeta }  .
\eqe 
Plugging~\eqref{eqn-shift-error-half} into~\eqref{eqn-shift-error0} shows that for $n\geq n_*$, 
\eqbn
\left| 2n - h_{e^{-n}}(z_n )\right|^{1/2+\zeta}
\leq \left| 2n - h_{e^{-t}}(z) \right|^{1/2+\zeta}   +     \frac12 \left| 2  n - h_{e^{-n}}(z_n)  \right|^{1/2 + \zeta}        + |\log C_0|^{1/2+\zeta}
\eqen 
Re-arranging shows that for $n\geq n_*$, 
\alb
\left| 2n - h_{e^{-n}}(z_n )\right|^{1/2+\zeta} 
&\leq 2  \left| 2n - h_{e^{-t}}(z) \right|^{1/2+\zeta}  +  2 |\log C_0|^{1/2+\zeta} \\
&\leq 2  \left| 2t - h_{e^{-t}}(z) \right|^{1/2+\zeta}  +  2 |\log C_0|^{1/2+\zeta}  ,
\ale
since $t\geq n$. 
This gives~\eqref{eqn-shift-error} with $C_1 = 2 |\log C_0|^{1/2+\zeta}$ for $n\geq n_*$. 
Due to the continuity of the circle average process, we can increase $C_1$ to get that~\eqref{eqn-shift-error} also holds for $n \leq n_*$. 
By combining~\eqref{eqn-shift-center} and~\eqref{eqn-shift-error}, we obtain~\eqref{eqn-shift-bound} with $C_1 + \log C_0 $ in place of $C_1$. 
\end{proof}

\begin{figure}[ht!]
\begin{center}
\includegraphics[scale=1]{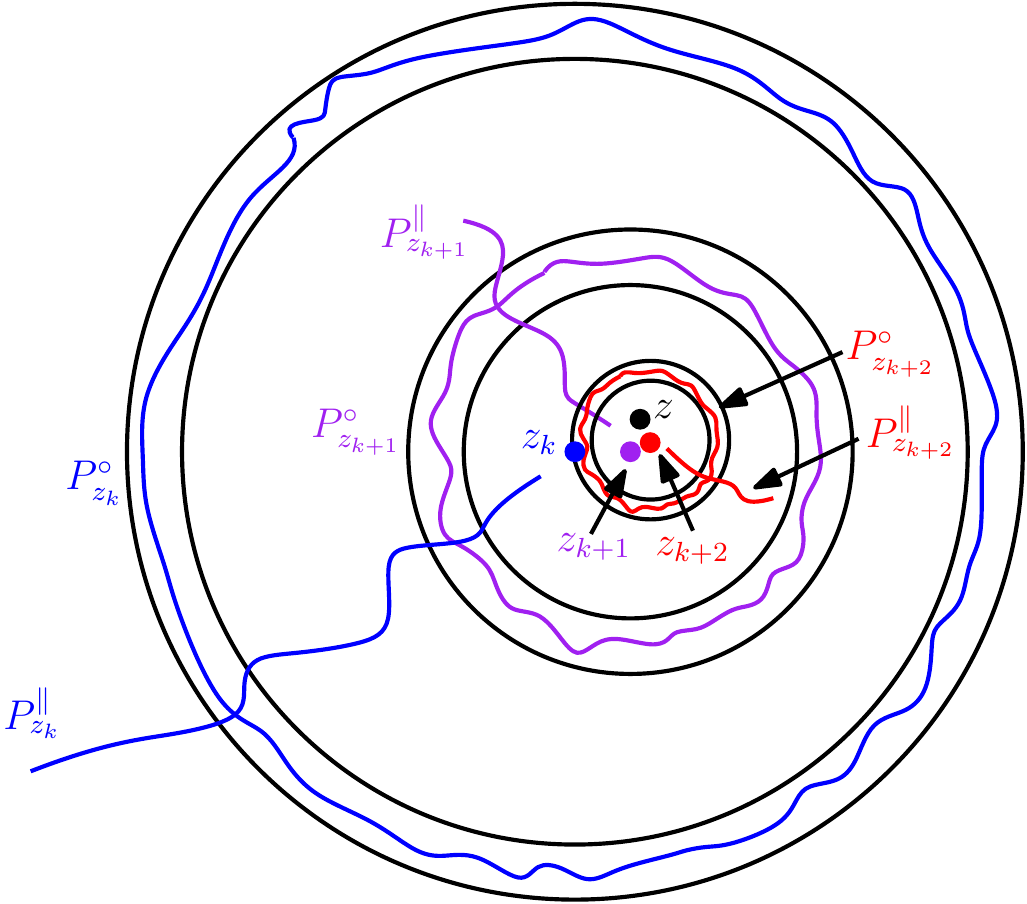} 
\caption{\label{fig-dist-to-int} Illustration of the proof of~\eqref{eqn-dist-to-int} of Lemma~\ref{lem-dist-to-int}. The union of the paths $P_k^\circ(z_k)$ and $P_k^\parallel(z_k)$ from conditions~\ref{item-dist-sum-around} and~\ref{item-dist-sum-across} for three consecutive values of $k$ is connected. To get~\eqref{eqn-dist-to-int}, we sum the $D_h$-lengths of these paths over all $k\in \{m-1,\dots,n-2\}$. 
}
\end{center}
\end{figure}

\begin{proof}[Proof of Lemma~\ref{lem-dist-to-int}]
\noindent\textit{Step 1: proof of~\eqref{eqn-dist-to-int-circle}.}
For $z\in U$ and $n\in\BB N$, let $z_n \in (e^{-n-100}\BB Z^2) \cap U$ be as in Lemma~\ref{lem-shift-bound}.
By~\eqref{eqn-use-gff-max'} and~\eqref{eqn-shift-bound}, if $t\in [n+1,n+2]$ such that $h_{e^{-t}}(z) \geq 2 t - \alpha \log t$, then 
\eqb
h_{e^{-t}}(z)
\leq 2n - \alpha_0 \log n +   2  \left| \alpha \log t \right|^{1/2+\zeta}  + \log C_0 +  C_1 .
\eqe
Since $\alpha < \alpha_0$, this last quantity is smaller than $2 t-\alpha \log t$ if $n$ is sufficiently large (how large is random and does not depend on $z$). Hence, there exists a random $n_*\in\BB N$ such that for each $n\geq n_*$, we have $h_{e^{-t}}(z) \leq 2t -\alpha \log t$ for each $t\in [n+1,n+2]$ and each $z\in U$.
By the continuity of the circle average process, the quantity $\sup_{z\in U} \sup_{t\in [0,n_*+1]} |h_{e^{-t}}(z)|$ is a.s.\ finite.
By choosing $C$ to be 100 times this quantity, say, we get~\eqref{eqn-dist-to-int-circle}. 
\medskip

\noindent\textit{Step 2: bounding lengths of paths.}
By applying~\eqref{eqn-shift-bound} and~\eqref{eqn-shift-error} to estimate the right side of~\eqref{eqn-dist-sum-around}, we get that for each $t\in [n+1,n+2]$, the $D_h$-lengths of the paths $P^\circ_n(z_n)$ and $P^\parallel_n(z_n)$ from conditions~\ref{item-dist-sum-around} and~\ref{item-dist-sum-across} are each bounded above by
\alb
& C_0 \exp\left( \xi h_{e^{-n}}(z) - \xi Q n  +  \left| 2n - h_{e^{-n}}(z)\right|^{1/2+\zeta}\right) \notag\\
&\qquad \leq C_0 \exp\left( \xi h_{e^{-t}}(z) - \xi Q n  + (2+2\xi) \left| 2t -h_{e^{-t}}(z)\right|^{1/2+\zeta} + 2 C_1  \right)  \notag\\
&\qquad\leq  C_0 e^{2C_1 + 2\xi Q}  \exp\left( \xi h_{e^{-t}}(z) - \xi Q t  + (2+2\xi) \left| 2 t -h_{e^{-t}}(z)\right|^{1/2+\zeta} \right) .
\ale
Hence, the $D_h$-lengths of each of these paths is bounded above by
\eqb \label{eqn-length-int}
C  \min_{t\in [n+1,n+2]}  \exp\left( \xi h_{e^{-t}}(z) - \xi Q t  + (2+2\xi) \left| 2 t -h_{e^{-t}}(z)\right|^{1/2+\zeta} \right)  
\eqe 
for $C = C_0 e^{2C_1 + 2\xi Q}$.  
\medskip

\noindent\textit{Step 3: proof of~\eqref{eqn-dist-to-int-around}.}
For $n\in\BB N$, then path $P^\circ_n(z_n)$ disconnects the inner and outer boundaries of the annulus $A_{z_n}^\circ(e^{-n}) = A_{z_n}(e^{-n-51/100} , e^{-n-1/2}) $.
Since $|z-z_n| \leq e^{-n-99}$, this path also disconnects the inner and outer boundaries of the annulus $\BB A_z(e^{-n-1} , e^{-n})$.  
Hence~\eqref{eqn-dist-to-int-around} follows from~\eqref{eqn-length-int} for $P_n^\circ(z_n)$. 
\medskip

\noindent\textit{Step 4: proof of~\eqref{eqn-dist-to-int}.} 
 Let $n,m \in \BB N$ with $m < n$. 
We will prove~\eqref{eqn-dist-to-int} by chaining together the paths $P_k^\circ(z_k)$ and $P_k^\parallel(z_k)$ from conditions~\ref{item-dist-sum-across} and~\ref{item-dist-sum-around} above for $k\in \{m-1,\dots,n-2\}$, where $z_k$ is as in Lemma~\ref{lem-shift-bound}. See Figure~\ref{fig-dist-to-int} for an illustration.
  
For each $k\in \{m-1,\dots,n-2\}$, the path $P_k^\parallel(z_k)$ goes from $\bdy B_{z_k}( e^{-k-100} )$ to $\bdy B_{z_k}( e^{-k} )$.   
We have 
\eqbn
|z_{k+1} - z_k| \leq |z_{k+1} - z| + |z_k - z| \leq 2 e^{-k-99} , 
\eqen
so the annuli $A_{z_k}^\circ( e^{-k}) $ and $A_{z_{k+1}}^\circ(  e^{-k-1} )$ are each contained in $\BB A_{z_k}( e^{-k} , e^{-k-100} )$ and disconnect the inner and outer boundaries of this last annulus. 
Hence $P_k^\parallel(z_k)$ intersects the paths $P_k^\circ(z_k)$ and $P_{k+1}^\circ(z_{k+1})$. 
Therefore, the union of the paths $P_k^\parallel(z_k)$ and $P_k^\circ(z_k)$ for $k\in \{m-1,\dots,n-2\}$ is connected. 
Furthermore, since $|z-z_{m-1}| \leq e^{-m-98}$ and $|z-z_{n-2}| \leq e^{-n-97}$, the path $P_{m-1}^\parallel(z_{m-1})$ intersects $\bdy B_z(e^{-m} )$ and the path $P_{n-2}^\parallel(z_{n-2})$ intersects $\bdy B_z(e^{-n-50} )$. 
Hence summing the estimate~\eqref{eqn-length-int} gives
\alb
&D_h\left(\bdy B_z(e^{-n-50} ) , \bdy  B_z( e^{-m} )    \right) \notag\\
&\qquad\qquad \leq 2 C  \sum_{k=m-1}^{n-2} \min_{t\in [k+1,k+2]}  \exp\left( \xi h_{e^{-t}}(z) - \xi Q t  + (2+2\xi) \left| 2 t -h_{e^{-t}}(z)\right|^{1/2+\zeta} \right) \\
&\qquad \qquad = 2 C  \sum_{k=m }^{n-1} \min_{t\in [k , k+1]}  \exp\left( \xi h_{e^{-t}}(z) - \xi Q t  + (2+2\xi) \left| 2 t -h_{e^{-t}}(z)\right|^{1/2+\zeta} \right) .
\ale 
\end{proof}

\subsection{Specializing to the case when $Q=2$}
\label{sec-dist-to-int'}

We henceforth assume that $\xi = \xi_\crit$, equivalently $Q = 2$. 
In this special case, the estimates of Lemma~\ref{lem-dist-to-int} can be simplified.

\begin{lem} \label{lem-dist-to-int'}
Let $U\subset \BB C$ be a bounded open set and let $\delta\in (0,\xi_\crit)$. Almost surely, there exists a random $C \in (0,\infty)$ such that the following is true for each $z\in U$. 
\begin{enumerate} 
\item For each $n,m\in\BB N$ with $m < n$, 
\eqb \label{eqn-dist-to-int'}
D_h\left( \bdy B_z( e^{-n-50} )  , \bdy B_z( e^{-m} )    \right) 
\leq C \sum_{k=m}^{n-1} \min_{t\in [k , k+1 ]}  \exp\left( (\xi_\crit - \delta) ( h_{e^{-t}}(z) -    2    t )      \right)  .
\eqe 
\item For each $n\in\BB N$, 
\eqb \label{eqn-dist-to-int-around'}
D_h\left(\text{around $\BB A_z( e^{-n-1} , e^{-n} )$} \right) \leq C \min_{t\in [n+1,n+2]}  \exp\left( (\xi_\crit  - \delta) ( h_{e^{-t}}(z) -    2  t )   \right) .
\eqe
\end{enumerate}
\end{lem}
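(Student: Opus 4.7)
The strategy is to invoke Lemma~\ref{lem-dist-to-int} with $\xi = \xi_\crit$, $Q = 2$, and then show that when $Q = 2$, the subpolynomial error term $(2+2\xi)|2t-h_{e^{-t}}(z)|^{1/2+\zeta}$ in the exponent can be absorbed, up to a random multiplicative constant, by shrinking the linear coefficient from $\xi_\crit$ to $\xi_\crit - \delta$. This works because, after setting $y_t := 2t - h_{e^{-t}}(z)$, the exponent in Lemma~\ref{lem-dist-to-int} reads $-\xi_\crit y_t + (2+2\xi_\crit)|y_t|^{1/2+\zeta}$, which we want to compare to $-(\xi_\crit - \delta) y_t$; the difference is $-\delta y_t + (2+2\xi_\crit)|y_t|^{1/2+\zeta}$.

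First, I would fix any $\zeta \in (0,1/2)$ (say $\zeta = 1/4$) and any $\alpha \in (0,1/4)$, then apply Lemma~\ref{lem-dist-to-int} to produce a random $C_0 \in (0,\infty)$ for which both~\eqref{eqn-dist-to-int-circle}--\eqref{eqn-dist-to-int-around} hold simultaneously (with $C_0$ in place of $C$) for every $z \in U$. Crucially, part~\eqref{eqn-dist-to-int-circle} gives a uniform lower bound
\[
y_t = 2t - h_{e^{-t}}(z) \geq \alpha \log t - C_0 \geq -C_0, \quad \forall t \geq 0, \quad \forall z \in U.
\]

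Next I would establish the elementary deterministic inequality: for every $\delta > 0$ and every $C_0 \geq 0$, there is a constant $C_1 = C_1(\delta,\xi_\crit,\zeta,C_0) < \infty$ such that
\[
(2+2\xi_\crit) |y|^{1/2+\zeta} - \delta y \leq C_1, \quad \forall y \geq -C_0.
\]
For $y \in [-C_0, 0)$ this is obvious since both $|y|$ and $-y$ are bounded by $C_0$; for $y \geq 0$, since $1/2+\zeta < 1$, the function $y \mapsto (2+2\xi_\crit) y^{1/2+\zeta} - \delta y$ attains a finite maximum on $[0,\infty)$.

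Combining these two ingredients, for every $z \in U$ and every $t \geq 0$,
\[
-\xi_\crit y_t + (2+2\xi_\crit)|y_t|^{1/2+\zeta} \leq -(\xi_\crit - \delta) y_t + C_1,
\]
so that
\[
\exp\!\left( \xi_\crit h_{e^{-t}}(z) - 2\xi_\crit t + (2+2\xi_\crit)|2t - h_{e^{-t}}(z)|^{1/2+\zeta}\right) \leq e^{C_1} \exp\!\left( (\xi_\crit - \delta)(h_{e^{-t}}(z) - 2t) \right).
\]
Substituting this bound into~\eqref{eqn-dist-to-int} and~\eqref{eqn-dist-to-int-around} of Lemma~\ref{lem-dist-to-int} yields~\eqref{eqn-dist-to-int'} and~\eqref{eqn-dist-to-int-around'} with $C = C_0 e^{C_1}$.

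There is no real obstacle here; the only point to be careful about is that the absorption step requires $y_t$ to be bounded below uniformly in $z$ and $t$, which is exactly what part~\eqref{eqn-dist-to-int-circle} of Lemma~\ref{lem-dist-to-int} provides. The choice of $\zeta$ is irrelevant for the conclusion since $1/2 + \zeta < 1$ regardless; what matters is that the error term grows sublinearly in $y_t$, so any positive linear slack $\delta$ suffices to dominate it.
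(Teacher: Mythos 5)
Your proof is correct and takes essentially the same approach as the paper: apply Lemma~\ref{lem-dist-to-int} with $\xi=\xi_\crit$, $Q=2$, use its part~(1) to uniformly lower-bound $y_t = 2t - h_{e^{-t}}(z)$, and absorb the sublinear error $(2+2\xi_\crit)|y_t|^{1/2+\zeta}$ into the linear slack $\delta y_t$ at the cost of a random multiplicative constant (the paper phrases this as the error being dominated for large $t$ and then increases $C$ on the bounded remaining range; your one-shot deterministic constant $C_1$ is equivalent). One cosmetic slip: $\alpha\log t - C_0 \geq -C_0$ requires $t\geq 1$, not all $t\geq 0$, but this is harmless since $m,n\in\BB N$ forces $t\geq 1$ in every minimum appearing in the lemma.
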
 
\begin{proof}
Let $\alpha \in (0,1/4)$ and $\zeta \in (0,1/2)$. 
By~\eqref{eqn-dist-to-int-circle} of Lemma~\ref{lem-dist-to-int}, a.s.\ there is a random $C \in (0,\infty)$ such that for each $z\in U$,
\eqb \label{eqn-dist-to-int-circle-flip}
2t - h_{e^{-t}}(z) \geq   \alpha \log t  -  C ,\quad \forall t \geq 0 .
\eqe 
By~\eqref{eqn-dist-to-int-circle-flip}, for each large enough $t> 0$ it holds for each $z\in U$ that
\eqbn
2t - h_{e^{-t}}(z) \geq 0 \quad \text{and} \quad (2 t - h_{e^{-t}}(z) )^{1/2-\zeta} \geq (2+2\xi_\crit)/\delta .
\eqen
Equivalently, the error term inside the exponential in~\eqref{eqn-dist-to-int} and~\eqref{eqn-dist-to-int-around} in Lemma~\ref{lem-dist-to-int} satisfies
\eqb \label{eqn-dist-to-int-error}
(2+2\xi_\crit)\left| 2 t - h_{e^{-t}}(z)  \right|^{1/2+\zeta}  
\leq \delta (2t - h_{e^{-t}}(z)) .
\eqe
By plugging~\eqref{eqn-dist-to-int-error} into~\eqref{eqn-dist-to-int} and~\eqref{eqn-dist-to-int-around} from Lemma~\ref{lem-dist-to-int}, recalling that $Q=2$, and possibly increasing $C$ to deal with the bounded interval of $t$-values for which~\eqref{eqn-dist-to-int-error} does not hold, we get~\eqref{eqn-dist-to-int'} and~\eqref{eqn-dist-to-int-around'}.
\end{proof}

We have the following simplified form of~\eqref{eqn-dist-to-int-around'} from Lemma~\ref{lem-dist-to-int'}.

\begin{lem} \label{lem-max-around}
Let $U\subset \BB C$ be a bounded open set and let $\theta \in (0,  \xi_\crit / 4)$. 
Almost surely, there is a random $C \in (0,\infty)$ such that for each 
$z\in U$ and each $n\in\BB N$,
\eqb \label{eqn-max-around}
D_h\left(\text{around $\BB A_z( e^{-n-1} , e^{-n} )$} \right) \leq C n^{- \theta } .
\eqe
\end{lem}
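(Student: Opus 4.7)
\textbf{Proof plan for Lemma~\ref{lem-max-around}.} The idea is to feed the pointwise upper bound on $h_{e^{-t}}(z)$ from Lemma~\ref{lem-dist-to-int}(1) into the around-distance estimate from Lemma~\ref{lem-dist-to-int'}(2), choosing the auxiliary parameters carefully so that the resulting exponent of $n$ is at most $-\theta$.

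More precisely, I would first observe that since $\theta < \xi_\crit/4$, I can fix auxiliary parameters $\alpha \in (0,1/4)$ and $\delta \in (0,\xi_\crit)$ such that $(\xi_\crit-\delta)\alpha > \theta$; this is possible because $(\xi_\crit-\delta)\alpha$ can be made arbitrarily close to $\xi_\crit/4$ by taking $\delta$ small and $\alpha$ close to $1/4$. Apply Lemma~\ref{lem-dist-to-int'} with this $\delta$ and apply Lemma~\ref{lem-dist-to-int} with this $\alpha$ (both on the same bounded open set $U$); this produces a single random constant $C_0 \in (0,\infty)$ (absorb the constants from both lemmas) such that, almost surely, for every $z\in U$ and every $t\geq 0$,
\[
h_{e^{-t}}(z) - 2t \leq -\alpha \log t + C_0,
\]
and such that for every $z\in U$ and every $n\in\BB N$,
\[
D_h\bigl(\text{around } \BB A_z(e^{-n-1},e^{-n})\bigr) \leq C_0 \min_{t\in[n+1,n+2]} \exp\bigl((\xi_\crit-\delta)(h_{e^{-t}}(z)-2t)\bigr).
\]

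Substituting the first display into the second gives, for every $n\in\BB N$ and every $z\in U$,
\[
D_h\bigl(\text{around } \BB A_z(e^{-n-1},e^{-n})\bigr) \leq C_0 \exp\bigl((\xi_\crit-\delta)(-\alpha \log(n+1) + C_0)\bigr) \leq C_1 (n+1)^{-(\xi_\crit-\delta)\alpha},
\]
where $C_1 := C_0 \exp((\xi_\crit-\delta)C_0)$ is a random constant. Since $(\xi_\crit-\delta)\alpha > \theta$, the ratio $(n+1)^{-(\xi_\crit-\delta)\alpha}/n^{-\theta}$ is bounded uniformly in $n\in\BB N$, so enlarging $C_1$ by this deterministic factor yields the required inequality $D_h(\text{around } \BB A_z(e^{-n-1},e^{-n})) \leq C n^{-\theta}$.

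There is essentially no serious obstacle here — the lemma is a clean corollary of the two preceding lemmas once the parameter relations are set up correctly. The only thing to be careful about is that both Lemma~\ref{lem-dist-to-int} and Lemma~\ref{lem-dist-to-int'} produce a single random $C$ valid uniformly in $z\in U$ and $n$ (or $t$), so no extra union bound or Borel--Cantelli step is needed; all the probabilistic work has already been done upstream, and the present lemma is a deterministic consequence.
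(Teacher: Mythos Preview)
Your proposal is correct and follows essentially the same route as the paper: combine the pointwise bound~\eqref{eqn-dist-to-int-circle} from Lemma~\ref{lem-dist-to-int} with the around-distance estimate~\eqref{eqn-dist-to-int-around'} from Lemma~\ref{lem-dist-to-int'}, then choose $\alpha$ close to $1/4$ and $\delta$ close to $0$ so that $(\xi_\crit-\delta)\alpha \geq \theta$. Your handling of the $(n+1)$ versus $n$ discrepancy and the absorption of constants is slightly more explicit than the paper's, but the argument is the same.
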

\begin{proof}
Let $\delta   \in (0,\xi_\crit)$ and $\alpha \in (0,1/4)$ to be chosen later, depending on $\theta$. 
The estimate~\eqref{eqn-dist-to-int-around'} from Lemma~\ref{lem-dist-to-int'} combined with~\eqref{eqn-dist-to-int-circle} from Lemma~\ref{lem-dist-to-int} shows that a.s.\ there is a random $C \in (0,\infty)$ such that for each $z\in U$ and each $n\in\BB N$,
\eqb
D_h\left(\text{around $\BB A_z(e^{-n-1} , e^{-n} )$} \right) \leq C   \exp\left( -  (\xi_\crit  - \delta) \alpha \log n  \right) = C n^{-(\xi_\crit-\delta)\alpha } .
\eqe
We conclude the proof by choosing $\delta$ sufficiently close to 0 and $\alpha$ sufficiently close to $1/4$ so that $(\xi_\crit-\delta)\alpha \geq \theta$. 
\end{proof}

\subsection{Distances between scales for ``good" and ``bad" points}
\label{sec-good-bad}

We now seek to estimate the sum appearing on the right side of~\eqref{eqn-dist-to-int'}. 
We first consider the easy case, when $h_{e^{-t}}(z)$ does not get too close to $2t$.

\begin{lem} \label{lem-good-pt-dist}
Let $U\subset\BB C$ be a bounded open set, let $\theta  > 0$, and let $\beta  > (1+\theta)/\xi_\crit$.  
Almost surely, there is a random $C \in (0,\infty)$ such that for each $z\in U$ and each $n,m \in \BB N$ with $m < n$ such that
\eqb \label{eqn-good-pt-dist-interval}
h_{e^{-t}}(z)  - h_1(z)  \leq 2 t - \beta \log t   , \quad \forall t \in [ m   , n ]_{\BB Z }  ,
\eqe 
it holds that
\eqb \label{eqn-good-pt-dist}
D_h\left( \bdy B_z(e^{- n - 50} ) , \bdy B_z( e^{-m  } ) \right) 
\leq C m^{-\theta }   .
\eqe 
\end{lem}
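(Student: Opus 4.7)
The plan is to feed the hypothesis~\eqref{eqn-good-pt-dist-interval} directly into the bound~\eqref{eqn-dist-to-int'} of Lemma~\ref{lem-dist-to-int'}, choosing the parameter $\delta$ there carefully so that the resulting series is a convergent $p$-series. The key is that $\beta > (1+\theta)/\xi_\crit$ gives us slack: we can pick $\delta\in (0,\xi_\crit)$ small enough that $(\xi_\crit-\delta)\beta > 1+\theta$, and the sum will then telescope to the required power of $m$.

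More precisely, fix $\delta \in (0,\xi_\crit)$ such that $(\xi_\crit-\delta)\beta \geq 1+\theta + \eta$ for some $\eta > 0$; such $\delta$ exists by the strict inequality $\xi_\crit\beta > 1+\theta$. Applying Lemma~\ref{lem-dist-to-int'} (to the open set $U$ and this choice of $\delta$) yields a random $C_0 \in (0,\infty)$ such that for all $z\in U$ and $m<n$,
\eqbn
D_h\left(\bdy B_z(e^{-n-50}),\bdy B_z(e^{-m})\right) \leq C_0 \sum_{k=m}^{n-1}\min_{t\in[k,k+1]} \exp\bigl((\xi_\crit-\delta)(h_{e^{-t}}(z)-2t)\bigr).
\eqen
For each $k\in[m,n-1]_{\BB Z}$ we have $k\in[m,n]_{\BB Z}$, so the minimum over $[k,k+1]$ is bounded by the value at $t=k$. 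Using~\eqref{eqn-good-pt-dist-interval} at $t=k$ gives $h_{e^{-k}}(z)-2k \leq h_1(z) - \beta \log k$, hence (for $k\geq 1$)
\eqbn
\min_{t\in[k,k+1]}\exp\bigl((\xi_\crit-\delta)(h_{e^{-t}}(z)-2t)\bigr) \leq e^{(\xi_\crit-\delta) h_1(z)}\, k^{-(\xi_\crit-\delta)\beta}.
\eqen

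Since $U$ is bounded and $z\mapsto h_1(z)$ is a.s.\ continuous, $C_1 := \sup_{z\in U} e^{(\xi_\crit-\delta) h_1(z)}$ is a.s.\ finite, and can be absorbed into the overall constant. Summing the resulting bound and using $(\xi_\crit-\delta)\beta \geq 1+\theta+\eta$:
\eqbn
\sum_{k=m}^{n-1} k^{-(\xi_\crit-\delta)\beta} \leq \sum_{k=m}^{\infty} k^{-(1+\theta+\eta)} \leq C_2 \, m^{-\theta-\eta} \leq C_2 \, m^{-\theta},
\eqen
with $C_2$ depending only on $\theta,\eta$ (the tail of a convergent $p$-series). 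Combining these bounds and setting $C := C_0 C_1 C_2$ (with a harmless adjustment to handle $m=1$, where the sum is just a finite constant) yields~\eqref{eqn-good-pt-dist}.

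I do not foresee any real obstacle in this lemma: once Lemma~\ref{lem-dist-to-int'} is in hand, it reduces to a simple choice of $\delta$ and a $p$-series comparison. The only bookkeeping points are (i) verifying that the strict inequality $\beta > (1+\theta)/\xi_\crit$ affords room to pick $\delta > 0$, and (ii) using the a.s.\ continuity of $h_1$ on $\ol U$ to get a uniform bound on $e^{(\xi_\crit-\delta) h_1(z)}$. Both are routine.
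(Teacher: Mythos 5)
Your proposal is correct and follows essentially the same route as the paper's own proof: both feed the hypothesis at integer times $t=k$ into Lemma~\ref{lem-dist-to-int'}, absorb $\sup_{z\in U}|h_1(z)|$ into the constant, and then bound the resulting $p$-series by a constant times $m^{-\theta}$ after choosing $\delta$ small enough that $(\xi_\crit-\delta)\beta > 1+\theta$.
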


Note that, unlike in most of the other lemma statements in this subsection, we allow a general choice of $\theta > 0$ in Lemma~\ref{lem-good-pt-dist} rather than requiring $\theta \in (0,\xi_\crit/4)$. 

\begin{proof}[Proof of Lemma~\ref{lem-good-pt-dist}]
Let $\delta \in (0,\xi_\crit)$ be small. 
By Lemma~\ref{lem-dist-to-int'}, a.s.\ there exists a random $C_0   \in (0,\infty)$ such that for each $z \in U$ satisfying~\eqref{eqn-good-pt-dist-interval},
\allb \label{eqn-good-pt-calc}
D_h\left( B_z(e^{-n-50} ) , \bdy B_z( e^{-m } ) \right) 
&\leq C_0 \sum_{k=m }^{n-1} \min_{t\in [k,k+1 ]}  \exp\left( (\xi_\crit - \delta) ( h_{e^{-t}}(z) -    2    t )      \right) \notag \\
&\leq C_0 \sum_{k=m }^{n-1} \min_{t\in [k,k+1 ]} \exp\left( (\xi_\crit - \delta)(h_1(z) - \beta \log t) \right) \quad \text{(by~\eqref{eqn-good-pt-dist-interval})}   \notag \\
&\leq C_0 \exp\left((\xi_\crit-\delta)\sup_{w\in U} |h_1(w)|  \right) \sum_{k= m }^{n-1}  k^{-(\xi_\crit-\delta)\beta} \quad \notag \\
&\leq C  m^{-(\xi_\crit-\delta)\beta + 1}  
\alle
for some random $C  >0$, where in the last line we use that $\sup_{w\in U} |h_1(w)|$ is a.s.\ finite and $(\xi_\crit - \delta) \beta  >1$. Since $\beta > (1+\theta)/\xi_\crit$, the exponent on the right side of~\eqref{eqn-good-pt-calc} is less than $-\theta$ provided $\delta$ is chosen to be small enough.  
\end{proof}

Lemma~\ref{lem-good-pt-dist} is not sufficient for our purposes since a.s.\ there are points $z \in U$ and arbitrarily small radii $e^{-t}$ for which $h_{e^{-t}}(z)  - h_1(z) \geq 2t - \left(\frac34 + o_t(1)\right) \log t$ (see Proposition~\ref{prop-gff-tight}). We have $3/4 < 1/\xi_\crit$, so we cannot take $\beta \leq 3/4$ in Lemma~\ref{lem-good-pt-dist}. Hence we need an alternative estimate to deal with points $z\in U$ for which $h_{e^{-t}}(z) - h_1(z)$ gets very close to $2t$ for some $t\in [m,n]$. We first consider the case when this happens at the right endpoint, i.e., $h_{e^{-n}}(z) - h_1(z) \geq 2n - \beta \log n$ for an appropriate $\beta>0$. We will eventually reduce to the case when $h_{e^{-n}}(z) - h_1(z)$ is large by considering the \emph{largest} value of $t \in [m,n]$ for which $ h_{e^{-t}}(z) - h_1(z) \geq 2t - \beta \log t$. In fact, for our purposes it is enough to take $m = \lfloor n/2\rfloor$. 

\begin{lem} \label{lem-bad-pt-dist}
Let $U\subset \BB C$ be a bounded open set, let $\theta \in (0 ,   \xi_\crit / 4 )$, and let $\beta > (1+\theta)/\xi_\crit$.  
Almost surely, there is a random constant $C \in (0,\infty)$ such that for each $n\in\BB N$ and each $z\in (e^{-n-100}\BB Z^2) \cap U$ such that
\eqb  \label{eqn-bad-pt-interval0}
h_{e^{-n}}(z)  - h_1(z) \geq 2n - \beta \log n ,
\eqe 
we have
\eqb \label{eqn-bad-pt-dist}
D_h\left( \bdy B_z( e^{-n} ) , \bdy B_z( e^{-n/2} ) \right) 
\leq C n^{-\theta }   .
\eqe 
\end{lem}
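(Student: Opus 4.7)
The idea is to apply the sum bound from Lemma~\ref{lem-dist-to-int'} and split it into ``good'' and ``bad'' scales.

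First, pick $\alpha_0 \in (\theta/\xi_\crit, 1/4)$ and $\delta > 0$ small enough that $(\xi_\crit - \delta)\alpha_0 > \theta$ and $(\xi_\crit - \delta)\beta > 1 + \theta$; both are possible by the hypotheses $\theta < \xi_\crit/4$ and $\beta > (1+\theta)/\xi_\crit$. Since every $D_h$-path from $\bdy B_z(e^{-n/2})$ to $\bdy B_z(e^{-n-50})$ must cross the separating circle $\bdy B_z(e^{-n})$, one has $D_h(\bdy B_z(e^{-n}), \bdy B_z(e^{-n/2})) \leq D_h(\bdy B_z(e^{-n-50}), \bdy B_z(e^{-n/2}))$. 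Applying Lemma~\ref{lem-dist-to-int'} with $m = \lceil n/2 \rceil$ (and absorbing $\sup_{w \in U} |h_1(w)|$, which is a.s.\ finite, into a random constant) then yields
\[D_h(\bdy B_z(e^{-n}), \bdy B_z(e^{-n/2})) \leq C_1 \sum_{k=\lceil n/2 \rceil}^{n-1} E_k, \qquad E_k := \min_{t \in [k, k+1]} \exp((\xi_\crit - \delta)(h_{e^{-t}}(z) - 2t)).\]
Partition the index set into $K_g := \{k \in [\lceil n/2 \rceil, n-1]_{\BB Z} : h_{e^{-k}}(z) - h_1(z) \leq 2k - \beta \log k\}$ and its complement $K_b$.

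For $k \in K_g$, taking $t = k$ in the minimum defining $E_k$ and using continuity of the circle-average process on $[k,k+1]$ gives $E_k \leq C_2 k^{-(\xi_\crit - \delta)\beta}$, so $\sum_{k \in K_g} E_k \leq C_3 n^{1-(\xi_\crit - \delta)\beta} \leq C_3 n^{-\theta}$, exactly as in the proof of Lemma~\ref{lem-good-pt-dist}. For $k \in K_b$ the hypothesis gives no useful bound on $E_k$, but Proposition~\ref{prop-gff-max} applied with parameter $\alpha_0$, simultaneously over all $k$ and all points of $(e^{-k-100}\BB Z^2) \cap U$, yields $h_{e^{-k}}(z) \leq 2k - \alpha_0 \log k + C_4$ a.s., whence $E_k \leq C_5 k^{-(\xi_\crit - \delta)\alpha_0}$ and $\sum_{k \in K_b} E_k \leq C_5 \cdot \#K_b \cdot (n/2)^{-(\xi_\crit - \delta)\alpha_0}$. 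It therefore suffices to show that $\#K_b \leq n^{(\xi_\crit - \delta)\alpha_0 - \theta}$ uniformly in all bad points $z$ and all large $n$.

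This uniform control on $\#K_b$ is the main obstacle and is precisely where the Brownian bridge estimate sketched in the outline (Lemma~\ref{lem-bridge-bound}) enters. Since $s \mapsto h_{e^{-s}}(z) - h_1(z)$ is a standard Brownian motion, the hypothesis $h_{e^{-n}}(z) - h_1(z) \geq 2n - \beta \log n$ combined with the a.s.\ upper bound from Proposition~\ref{prop-gff-max} confines the endpoint of this BM at time $n$ to a window of width $O(\log n)$ near $2n$; conditionally on this endpoint, the restriction to $[0,n]$ is essentially a Brownian bridge which moreover must stay below $2t - \alpha_0 \log t$ throughout $[n/2, n]$. Lemma~\ref{lem-bridge-bound} should quantify the fact that each excursion of such a bridge into the narrow band $[2t - \beta \log t, 2t - \alpha_0 \log t]$ has a uniformly positive chance to exit through the top within $O((\log n)^2)$ units of time, yielding a conditional tail bound of the form $\BB P[\#K_b(z) \geq K \mid z \text{ is bad}] \leq \exp(-cK/(\log n)^2)$. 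The expected number of bad $z \in (e^{-n-100} \BB Z^2) \cap U$ is polynomial in $n$ (roughly $n^{2\beta}$, by the Gaussian tail of $h_{e^{-n}}(z)$), so choosing $K$ to be a sufficiently large power of $\log n$ makes the expected number of bad $z$ with $\#K_b(z) \geq K$ summable in $n$; Borel-Cantelli then gives $\#K_b \leq (\log n)^{C_6}$ for all large $n$ and all bad $z$ simultaneously, comfortably smaller than the required $n^{(\xi_\crit - \delta)\alpha_0 - \theta}$. Combining this with the good-sum bound completes the proof.
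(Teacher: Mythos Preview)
Your proposal is correct and follows essentially the same strategy as the paper: bound the distance by the sum from Lemma~\ref{lem-dist-to-int'}, split scales into good and bad, treat the good part as in Lemma~\ref{lem-good-pt-dist}, and use the Brownian bridge estimate Lemma~\ref{lem-bridge-bound} together with a Gaussian tail bound on the endpoint, a union bound over lattice points, and Borel--Cantelli to show the bad set has size at most $(\log n)^{O(1)}$.

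Two small technical points to tighten. First, Proposition~\ref{prop-gff-max} as stated only controls $h_{e^{-k}}(z)$ for $z$ in the lattice $(e^{-k-100}\BB Z^2)$, but your point $z$ lies in the finer lattice $(e^{-n-100}\BB Z^2)$, which is not contained in $(e^{-k-100}\BB Z^2)$ for $k<n$; the paper avoids this by citing~\eqref{eqn-dist-to-int-circle} of Lemma~\ref{lem-dist-to-int}, which already holds for all $z\in U$. Second, Lemma~\ref{lem-bridge-bound} bounds the Lebesgue measure of $\{t\in[n/2,n]: W_t\geq 2t-\beta\log n\}$, not the cardinality of $K_b$; the paper works with the integral form of~\eqref{eqn-dist-to-int'} throughout (see~\eqref{eqn-dist-to-int-half} and~\eqref{eqn-bad-pt-compute}) so that the Lebesgue-measure bound applies directly, and you could do the same.
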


Intuitively, the reason why the condition~\eqref{eqn-bad-pt-interval0} helps us get a better estimate is that (by the Gaussian tail bound) the probability that $h_{e^{-n}}(z)  - h_1(z) \geq 2n - \beta \log n$ is small. So, there will not be very many points $z\in (e^{-n-100}\BB Z^2) \cap U$ for which~\eqref{eqn-bad-pt-interval0} holds, and we can take a union bound over all of them. The key inputs in the proof of Lemma~\ref{lem-bad-pt-dist} are Lemma~\ref{lem-dist-to-int'} and the following estimate for a Brownian bridge, which we will apply to the conditional law of $\{h_{e^{-t}}(z) -h_1(z)\}_{t\in [0,n]}$ given $h_{e^{-n}}(z) - h_1(z)$.

\begin{lem} \label{lem-bridge-bound}
Fix $\beta > \alpha > 0$. 
Let $T > 0$, let $x \in [\alpha \log T ,   \beta\log T]$, and let $W$ be a Brownian bridge from 0 to $2T - x$ in time $T$. 
For $S > 0$, 
\eqb \label{eqn-bridge-bound}
\BB P\left[ W_t \leq  2 t - \alpha \log T  \: \forall t \in [T/2 ,T] , \: \int_{T/2}^T \BB 1_{(W_t\geq 2 t - \beta \log T)} \,dt > S \right] 
\leq c_0 \exp\left( - c_1 \frac{S}{(\log T)^2} \right) 
\eqe 
for constants $c_0,c_1>0$ depending only on $\alpha,\beta$. 
\end{lem}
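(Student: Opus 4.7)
The plan is to exploit the strong Markov property of the Brownian bridge at a sequence of entry times into the band where the integrand equals one, and to show that each entry produces a definite chance for the bridge to drop below the lower barrier. The first step is a change of variables: set $\wt Y_t := 2t - W_t - \alpha\log T$, so that $\wt Y$ is a Brownian bridge from $-\alpha\log T$ to $x - \alpha\log T \in [0,(\beta-\alpha)\log T]$ in time $T$, and the event in~\eqref{eqn-bridge-bound} becomes $G \cap \{L > S\}$, where $G := \{\wt Y_t \geq 0 \: \forall t \in [T/2,T]\}$ and $L := \int_{T/2}^T \BB 1_{\wt Y_t \leq (\beta-\alpha)\log T}\,dt$.

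Next, setting $\Delta := (\log T)^2$, I would introduce stopping times $\tau_1 := \inf\{t \geq T/2 : \wt Y_t \leq (\beta-\alpha)\log T\}$ and $\tau_{j+1} := \inf\{t \geq \tau_j + \Delta : \wt Y_t \leq (\beta-\alpha)\log T\}$, and let $K := \#\{j : \tau_j \leq T\}$. Since the integrand of $L$ vanishes outside $\bigcup_j [\tau_j, \tau_j + \Delta]$ and each such interval has length $\Delta$, we get $L \leq K\Delta$. Thus $\{L > S\}$ forces $K > S/\Delta$, and hence, by the $\Delta$-separation of consecutive $\tau_j$'s, also $\tau_{k_0} \leq T - \Delta$ with $k_0 := \lfloor S/\Delta \rfloor$.

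The heart of the argument is the one-step estimate: for every $j$ with $\tau_j \leq T - \Delta$, one has
\[
\BB P\left[\wt Y_t \geq 0 \: \forall t \in [\tau_j, \tau_j + \Delta] \,\big|\, \mcl F_{\tau_j}\right] \leq 1 - p
\]
for some $p = p(\alpha, \beta) > 0$. To prove this, by the strong Markov property, conditional on $\mcl F_{\tau_j}$ the process $B_s := \wt Y_{\tau_j + s} - (\beta-\alpha)\log T$ is a Brownian bridge from $0$ to $x - \beta\log T \leq 0$ in time $T-\tau_j \geq \Delta$. Evaluating at $s^\star := \min(\Delta, (T-\tau_j)/2)$ yields a Gaussian with mean $\leq 0$ and variance at least $\Delta/4 = (\log T)^2/4$, so its probability of being at most $-(\beta-\alpha)\log T$ is at least $\BB P[Z \leq -2(\beta-\alpha)] =: p$ for a standard normal $Z$. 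Iterating this estimate via the strong Markov property at $\tau_1, \tau_2, \ldots$ will give $\BB P[G, \tau_{k_0} \leq T - \Delta] \leq (1-p)^{k_0}$, and combining with the previous paragraph produces the bound in~\eqref{eqn-bridge-bound} with $c_0 = (1-p)^{-1}$ and $c_1 = -\log(1-p) > 0$.

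The main obstacle is the one-step estimate itself: the lower bound $p$ must be uniform in the stopping time $\tau_j$ and in the future endpoint constraint of the bridge. The specific choice $s^\star = \min(\Delta, (T-\tau_j)/2)$ is what handles this, since it sidesteps the variance degeneracy of the bridge near its endpoint by guaranteeing that at some time within $[\tau_j, \tau_j + \Delta]$ the bridge has fluctuations of standard deviation of order $\log T$, exactly matching the width $(\beta-\alpha)\log T$ of the band that must be traversed downward.
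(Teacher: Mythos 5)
Your proof is correct and takes essentially the same approach as the paper: $(\log T)^2$-separated entry stopping times combined with a uniform one-step escape probability, iterated via the strong Markov property of the bridge. The differences are cosmetic — your affine change of variables makes the band horizontal and your $s^\star = \min(\Delta,(T-\tau_j)/2)$ device handles the variance degeneracy where the paper instead requires $\tau_k \le T - 2(\log T)^2$; note also that strictly $B_0 = \wt Y_{\tau_j} - (\beta-\alpha)\log T \le 0$ rather than $=0$, but since your subsequent estimate only uses that the conditional mean is $\le 0$, this imprecision is harmless.
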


In the application we have in mind, we will take $\alpha \in (0,1/4)$, $\beta > (1+\theta)/\xi_\crit$ as in Lemma~\ref{lem-good-pt-dist}, and $T=n$.
Our Brownian bridge $W_t =  h_{e^{-t}}(z) -h_1(z) $ (conditioned on $h_{e^{-n}}(z) -h_1(z)$) will be forced to stay below $ 2 t - \alpha \log n$ for $t\in [n/2,n]$ by Proposition~\ref{prop-gff-max} (in the form of~\eqref{eqn-dist-to-int-circle} of Lemma~\ref{lem-dist-to-int}). Hence Lemma~\ref{lem-bridge-bound} will provide an upper bound on the Lebesgue measure of the set of times $t\in [n/2,n]$ for which $h_{e^{-t}}(z) -h_1(z) \geq 2t - \beta \log n$. This will allow us to upper-bound the sum on the right side of~\eqref{eqn-dist-to-int'}.

\begin{figure}[ht!]
\begin{center}
\includegraphics[scale=1]{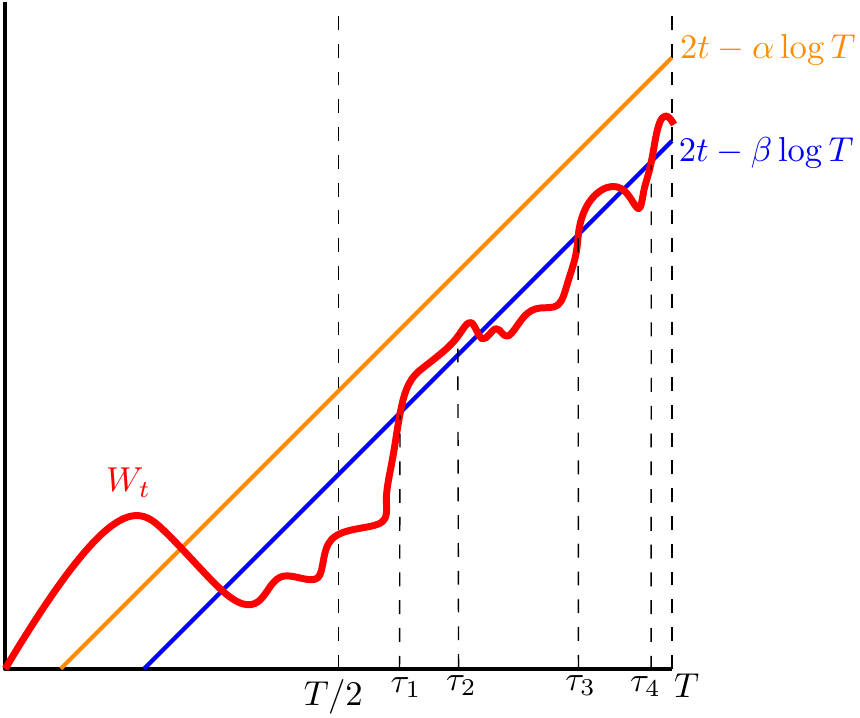} 
\caption{\label{fig-bridge-bound} Illustration of the proof of Lemma~\ref{lem-bridge-bound}. The time $\tau_1$ is the first time after $T/2$ for which $W_t \geq 2t-\beta\log T$ and the time $\tau_k$ is the first time after $\tau_{k-1} + (\log T)^2$ for which $W_t \geq 2t-\beta\log T$, or $\tau_k = T$ if no such time exists. Note that we may have $W_{\tau_k} > 2\tau_k - \beta \log T$, as is the case for $\tau_2$ in the figure. The idea of the proof is that in each interval $[\tau_k , \tau_k + (\log T)^2]$ there is a chance that $W_t$ goes above $2t - \alpha \log T$. From this and an independence argument, we get it is very unlikely that many of the times $\tau_k$ are strictly less than $T$ and $W_t \leq 2t - \alpha \log T$ for all $t\in [T/2,T]$. 
}
\end{center}
\end{figure}

\begin{proof}[Proof of Lemma~\ref{lem-bridge-bound}]
The idea of the proof is that if $t$ is such that $W_t \geq 2 t - \beta \log T$, then by a basic Gaussian estimate there is a positive chance that $W_{t+(\log T)^2}$ will be larger than $2 t - \alpha \log T$. So, it is very unlikely that $W$ spends a lot of time above $2 t-\beta \log T$ while simultaneously staying below $2 t- \alpha \log T$. See Figure~\ref{fig-bridge-bound} for an illustration. 
\medskip

\noindent\textit{Step 1: setup.}
To simplify the calculations, we will first reduce to a Brownian bridge from 0 to 0. 
We can write
\eqbn
W_t = \rng W_t + \frac{t}{T}(2 T - x) = \rng W_t + 2 t -  \frac{t}{T} x
\eqen 
where $\rng W$ is a Brownian bridge from 0 to 0 in time $T$. 
The event whose probability we seek to bound can be written in terms of $\rng W$ as
\eqb \label{eqn-bridge-event}
G = \left\{ \rng W_t \leq    \frac{t}{T} x  - \alpha \log T    \: \forall t \in [T/2 ,T] , \: \int_{T/2}^T \BB 1\left\{\rng W_t \geq  \frac{t}{T} x - \beta \log T   \right\}   \,dt > S \right\} .
\eqe 

Let
\eqbn
\tau_1 := T\wedge \inf\left\{t\geq T/2 :   \rng W_t \geq  \frac{t}{T} x - \beta \log T     \right\} .
\eqen
For $k\geq 2$, inductively let
\eqb \label{eqn-bridge-time-def}
\tau_k := T \wedge \inf\left\{t \geq \tau_{k-1} + (\log T)^2 :  \rng W_t  \geq \frac{t}{T} x - \beta \log T  \right\} . 
\eqe 
\medskip

\noindent\textit{Step 2: reducing to a bound for an event involving the $\tau_k$s.}
We will now upper-bound the probability of the event $G$ of~\eqref{eqn-bridge-event} in terms of the $\tau_k$s.
To this end, let
\eqb \label{eqn-bridge-above}
E_k := \left\{ \tau_k \leq T - 2(\log T)^2,\:  \rng W_{\tau_k + (\log T)^2} >  \frac{\tau_k + (\log T)^2}{T} x  - \alpha \log T   \right\} .
\eqe 
Note that $E_k \in \sigma(\rng W|_{[0,\tau_{k+1}]})$. Furthermore, if $E_k$ occurs for any $k\in\BB N$, then the first condition in the event $G$ of~\eqref{eqn-bridge-event} does not occur. Hence
\eqb \label{eqn-bridge-bigcap}
G\subset \bigcap_{k=1}^\infty E_k^c .
\eqe 
 
Let
\eqb \label{eqn-last-hit}
K := \max\left\{k\in\BB N : \tau_k \leq T - 2(\log T)^2 \right\} .
\eqe 
If $t \in [T/2,T]$ does not belong $\bigcup_{k=1}^K [\tau_k , \tau_k + (\log T)^2] \cup [T-2(\log T)^2,T]$, then $\rng W_t \leq  \frac{t}{T} x  - \beta \log T $. Consequently,
\eqbn
\int_{T/2}^T \BB 1\left\{\rng W_t \geq  \frac{t}{T} x - \beta \log T \right\}   \,dt  \leq (K+2) (\log T)^2 .
\eqen
By~\eqref{eqn-bridge-bigcap} and the definition~\eqref{eqn-bridge-event} of $G$, we therefore have
\eqb  \label{eqn-bridge-event-decomp} 
G\subset \bigcap_{k=1}^\infty E_k^c  \cap \left\{K  > \frac{S}{(\log T)^2} - 2 \right\}  .
\eqe 
\medskip

\noindent\textit{Step 3: proof conditional on a Brownian bridge calculation.}
Let us now bound the probability of the event on the right side of~\eqref{eqn-bridge-event-decomp}. 
Just below, we will show via an elementary Brownian bridge calculation that there is a constant $p = p(\alpha,\beta) \in (0,1)$ such that for each $k\in \{0,1,2,\dots\}$, it holds a.s.\ on the event $\{\tau_k \leq T - 2(\log T)^2\}$ that
\eqb \label{eqn-bridge-prob}
\BB P\left[ E_k \,|\, \rng W|_{[0,\tau_{k }]} \right] \geq p . 
\eqe 
Since $\{\tau_k \leq T - 2(\log T)^2\} \subset E_k$ and $  E_j \in \sigma\left(\rng W|_{[0,\tau_{k }]}\right)$ for each $j \in \{1,\dots,k-1\}$, we deduce from~\eqref{eqn-bridge-prob} that
\allb \label{eqn-bridge-prob'}
&\BB P\left[E_k^c  \cap \{\tau_k \leq T - 2(\log T)^2\}     \,|\, \bigcap_{j=1}^{k-1} E_j^c \cap \{\tau_{k-1} \leq T - 2(\log T)^2\}       \right]\notag\\
&\qquad\qquad \leq \BB P\left[ E_k^c   \,|\, \bigcap_{j=1}^{k-1} E_j^c    \cap \{\tau_k \leq T - 2(\log T)^2\}       \right] \notag\\
&\qquad\qquad  \leq 1 - p .
\alle
Iterating the estimate~\eqref{eqn-bridge-prob'} $k$ times gives
\eqb \label{eqn-bridge-iterate}
\BB P\left[\tau_k \leq T - 2(\log T)^2 , \: \bigcap_{j=1}^k E_j^c \right] \leq (1-p)^k .
\eqe  
We now take $k = \lceil \frac{S}{(\log T)^2} - 2 \rceil$ in~\eqref{eqn-bridge-iterate} and recall the definition~\eqref{eqn-last-hit} of $K$ to get 
\eqbn
\BB P\left[  \bigcap_{k=1}^\infty E_k^c  \cap \left\{K  > \frac{S}{(\log T)^2} - 2 \right\} \right] \leq c_0 \exp\left( - c_1 \frac{S}{(\log T)^2} \right) 
\eqen
for constants $c_0,c_1 >0$ depending only on $\alpha,\beta$. Combining this with~\eqref{eqn-bridge-event-decomp} concludes the proof. 
\medskip

\noindent\textit{Step 4: Brownian bridge calculation.}
It remains to prove~\eqref{eqn-bridge-prob}. The proof is an elementary application of the formulas for the mean and variance of a Brownian bridge with given endpoints, together with some straightforward estimates.

If we condition on $\rng W|_{[0,\tau_k]}$, then the conditional law of $\rng W_{\cdot + \tau_k}$ is that of a Brownian bridge from $\rng W_{\tau_k}$ to 0 in time $T-\tau_k$. In particular, if $\tau_k < T-  (\log T)^2$, then the conditional law of $\rng W_{\tau_k + (\log T)^2}$ is Gaussian with mean 
\eqb \label{eqn-bridge-mean}
\left(1 - \frac{(\log T)^2}{T-\tau_k} \right) \rng W_{\tau_k}    
\eqe 
and variance
\eqb  \label{eqn-bridge-var}
(\log T)^2 - \frac{(\log T)^4}{T-\tau_k} . 
\eqe 

We will now estimate the above formulas for the conditional mean and variance. 
Recall that $T/2 \leq \tau_k \leq T$ and $\rng W_{\tau_k} \geq \frac{\tau_k}{T} x  - \beta \log T $ by~\eqref{eqn-bridge-time-def} and $x  \in [\alpha \log T ,\beta \log T]$ by definition. 
In particular, 
\eqbn
\frac{\tau_k}{T} x - \beta \log T \leq x - \beta \log T \leq 0 .
\eqen
Hence 
\allb \label{eqn-bridge-cond-mean}
\BB E\left[ \rng W_{\tau_k + (\log T)^2} \,|\, \rng W|_{[0,\tau_k]}  \right] 
&= \left(1 - \frac{(\log T)^2}{T-\tau_k} \right) \rng W_{\tau_k}  \notag\\
&\geq \left(1 - \frac{(\log T)^2}{T-\tau_k} \right) \left( \frac{\tau_k}{T} x - \beta \log T \right) \notag\\
&\geq   \frac{\tau_k}{T} x - \beta \log T    \quad \text{(since $\frac{\tau_k}{T} x - \beta \log T \leq 0$)} \notag\\
&\geq  \frac{1}{2} x - \beta \log T  \quad \text{(since $\tau_k \geq T/2$)}  \notag\\
&\geq (\alpha/2 - \beta) \log T \quad \text{(since $x \geq \alpha \log T$)}  .
\alle
Furthermore, using~\eqref{eqn-bridge-var}, we see that if $\tau_k \leq T - 2(\log T)^2$, then 
\eqb \label{eqn-bridge-cond-var}
\op{Var}\left[ \rng W_{\tau_k + (\log T)^2} \,|\, \rng W|_{[0,\tau_k]}  \right] 
\geq \frac12 (\log T)^2 .
\eqe

On the event $\{\tau_k \leq T - 2(\log T)^2\}$, the quantity appearing in the definition~\eqref{eqn-bridge-above} of $E_k$ satisfies
\eqb \label{eqn-bridge-threshold}
 \frac{\tau_k + (\log T)^2}{T} x  - \alpha \log T 
 \leq x - \alpha \log T \leq (\beta-\alpha) \log T .
\eqe 
Hence, on this event
\alb
\BB P\left[ E_k \,|\, \rng W|_{[0,\tau_{k }]} \right]
&\geq  \BB P\left[ \rng W_{\tau_k + (\log T)^2} >  (\beta-\alpha) \log T  \,|\, \rng W|_{[0,\tau_{k }]} \right] \quad \text{(by~\eqref{eqn-bridge-threshold})} \\ 
&\geq p \quad \text{(by~\eqref{eqn-bridge-cond-mean} and~\eqref{eqn-bridge-cond-var})} ,
\ale
where $p \in (0,1)$ depends only on $\alpha,\beta$. 
\end{proof}

\begin{proof}[Proof of Lemma~\ref{lem-bad-pt-dist}]
Let us first explain the main idea of the proof. For a small $\delta\in (0,\xi_\crit)$, Lemma~\ref{lem-dist-to-int'} gives the bound
\eqb \label{eqn-bad-pt-dist-outline}
D_h\left( \bdy B_z( e^{-n} ) , \bdy B_z( e^{-n/2} ) \right) \leq C \int_{\lfloor n/2 \rfloor}^n \exp\left((\xi_\crit-\delta) ( h_{e^{-t}}(z) -2t) \right) \,dt 
\eqe
for a random $C \in (0,\infty)$ (which does not depend on $z$ and $n$). 
To estimate the integral on the right side of~\eqref{eqn-bad-pt-dist-outline}, we consider separately the integrals over the ``good" set of $t\in [\lfloor n/2 \rfloor , n]$ where $h_{e^{-t}}(z) - 2t  \leq -\beta\log n$ and the ``bad" set where $h_{e^{-t}}(z) - 2t  > -\beta\log n$.  
The integral over the ``good" set is bounded above by a constant times $n^{-\theta}$ by our choice of $\beta$. 
Proposition~\ref{prop-gff-max} together with Lemma~\ref{lem-bridge-bound} will allow us to show that the Lebesgue measure of the ``bad" set is small. Furthermore, Proposition~\ref{prop-gff-max} (in the form of Lemma~\ref{lem-dist-to-int}) shows that if $\alpha\in (0,1/4)$, then when $n$ is sufficiently large, we have $h_{e^{-t}}(z) - 2t \leq -\alpha \log n$ for each $t\in [\lfloor n/2\rfloor , n]$. These two facts will allow us to upper-bound the integral over the ``bad" set by a constant times $n^{-\theta}$ as well. 
Let us now proceed with the details.
\medskip

\noindent\textit{Step 1: setup.}
Let $\alpha \in (0,1/4)$ (we will eventually take $\alpha$ to be very close to $1/4$, depending on $\theta$). By~\eqref{eqn-dist-to-int-circle} of Lemma~\ref{lem-dist-to-int} applied with $\alpha' \in (\alpha,1/4)$ in place of $\alpha$, a.s.\ there exists a random $t_* \in \BB N$ such that for each $z\in U$, 
\eqb \label{eqn-use-circle}
h_{e^{-t}}(z)  - h_1(z) \leq 2t - \alpha \log(2t)  ,\quad \forall t \geq t_* .
\eqe

For $n\in\BB N$ and $z\in  U$, let
\allb \label{eqn-bad-dist-event}
G_n(z) 
&:= \left\{ h_{e^{-t}}(z) - h_1(z) \leq  2 t - \alpha \log n  \: \forall t \in [n/2 ,n] \right\} \notag\\
&\qquad \cap\left\{ \int_{n/2}^n \BB 1\left\{  h_{e^{-t}}(z) - h_1(z)   \geq 2 t -  \beta \log n \right\} \,dt > (\log n)^4   \right\} .
\alle
Note that by~\eqref{eqn-use-circle}, a.s.\ for each large enough $n\in\BB N$ the first event in the definition~\eqref{eqn-bad-dist-event} occurs for every $z\in U$. 
The event $G_n(z)$ should be compared to the event of Lemma~\ref{lem-bridge-bound}. 
\medskip

\noindent\textit{Step 2: bound for the probability of $G_n(z) \cap \{h_{e^{-n}}(z) - h_1(z) \in [2n - \beta \log n , 2n - \alpha \log n]\}$.}
By the calculations in~\cite[Section 3.1]{shef-kpz}, for each fixed $z\in U$, the process $t\mapsto h_{e^{-t}}(z) - h_1(z)$ is a standard linear Brownian motion. 
In particular, if $n\in\BB N$, $z\in   U$, and $x \in [\alpha \log n , \beta\log n]$, then the conditional distribution of $t\mapsto h_{e^{-t}}(z) - h_1(z)$ given $\{h_{e^{-n}}(z) - h_1(z) = 2n -  x\}$ is that of a Brownian bridge from 0 to $2n - x$ in time $n$. 
By Lemma~\ref{lem-bridge-bound} applied with $T=n$ and $S = (\log n)^4$, we therefore obtain
\eqb \label{eqn-use-bridge-bound}
\BB P\left[ G_n(z) \,|\, h_{e^{-n}}(z) - h_1(z) \in [2n - \beta \log n , 2n - \alpha \log n] \right] \leq c_0 \exp\left( - c_1 (\log n)^2 \right) 
\eqe
for constants $c_0,c_1>0$ depending only on $\alpha,\beta$. Since $h_{e^{-n}}(z) - h_1(z)$ is centered Gaussian with variance $n$, we also have
\eqb \label{eqn-bad-pt-gaussian}
\BB P\left[ h_{e^{-n}}(z) - h_1(z)  \geq 2n - \beta \log n  \right] 
\leq \exp\left( - \frac{(2n-\beta\log n)^2}{2n} \right)
\leq \exp\left( -2n \right) n^{ 2\beta}  . 
\eqe 
By~\eqref{eqn-use-bridge-bound} and~\eqref{eqn-bad-pt-gaussian}, for each $z\in  U$,
\eqb \label{eqn-bad-pt-combined}
\BB P\left[ G_n(z) ,\,  h_{e^{-n}}(z) - h_1(z) \in [2n - \beta \log n , 2n - \alpha \log n] \right] \leq c_0 \exp\left( -2n  - c_1 (\log n)^2 \right) n^{2\beta }. 
\eqe
\medskip

\noindent\textit{Step 3: a.s.\ bounds for points $z$ such that $h_{e^{-n}}(z) - h_1(z) \geq 2n-\beta\log n$.}
By~\eqref{eqn-bad-pt-combined} and a union bound over $O_n(e^{-2n})$ choices of $z \in (e^{-n-100}\BB Z^2)\cap U$, it holds with probability at least  $1 - O_n\left(  e^{-c_1(\log n)^2} n^{2\beta}       \right) $ that for each such $z$, either $G_n(z)$ does not occur or $h_{e^{-n}}(z) - h_1(z) \notin [2n - \beta \log n , 2n - \alpha \log n]$. 
Recalling the definition~\eqref{eqn-bad-dist-event} of $G_n(z)$, we see that this implies that with probability at least $1 - O_n\left(  e^{-c_1(\log n)^2} n^{2\beta}       \right) $, at least one of the following three conditions holds for each $z\in (e^{-n-100}\BB Z^2)\cap U$:
\begin{enumerate}[$(i)$]
\item $h_{e^{-n}}(z) -h_1(z) < 2n-\beta \log n$; \label{item-trichotomy-upper}
\item If we let 
\eqb \label{eqn-trichotomy-int}
H_n(z,t) := \left\{  h_{e^{-t}}(z) - h_1(z)   \geq 2 t - \beta \log n \right\} ,
\eqe
then $\int_{n/2}^n \BB 1_{H_n(z,t)} \,dt \leq(\log n)^4$; or \label{item-trichotomy-int}
\item $h_{e^{-t}}(z) - h_1(z) >  2 t - \alpha \log n $ for some $t\in [n/2,n]$. \label{item-trichotomy-lower}
\end{enumerate}
By the Borel-Cantelli lemma, a.s.\ this trichotomy holds for all large enough $n\in\BB N$. 

By~\eqref{eqn-use-circle}, a.s.\ for each large enough $n\in\BB N$ the condition~\eqref{item-trichotomy-lower} is not satisfied for any $z\in (e^{-n-100}\BB Z^2)\cap U$. 
Therefore, it is a.s.\ the case that for each large enough $n\in\BB N$, it holds for each $z\in (e^{-n-100}\BB Z^2)\cap U$  such that $h_{e^{-n}}(z)  - h_1(z) \geq 2n - \beta \log n$ that
\eqb \label{eqn-bad-pt-dichotomy}
h_{e^{-t}}(z) - h_1(z) \leq 2t - \alpha \log n , \: \forall t \in [n/2,n] \quad \text{and} \quad 
\int_{n/2}^n \BB 1_{H_n(z,t)} \,dt \leq(\log n)^4 .
\eqe
\medskip

\noindent\textit{Step 4: splitting up the integral.}
Now let $\delta  \in (0,\xi_\crit)$ be small. 
By Lemma~\ref{lem-dist-to-int'}, a.s.\ there exists $C \in (0,\infty)$ such that for each $n\in\BB N$ and each $z\in U$, 
\allb \label{eqn-dist-to-int-half}
D_h\left( \bdy B_z( e^{-n-50} ) , \bdy B_z( e^{-n/2} ) \right) 
&\leq C \sum_{k=\lfloor n /2 \rfloor }^{n-1} \min_{t\in [k , k+1]}  \exp\left( (\xi_\crit-\delta) ( h_{e^{-t}}(z) -    2    t ) \right) \notag \\
&\leq C \int_{\lfloor n/2 \rfloor}^{n } \exp\left( (\xi_\crit-\delta) ( h_{e^{-t}}(z) -    2    t )    \right) \,dt .
\alle 
We now break up the integral on the right side of~\eqref{eqn-dist-to-int-half} based on whether or not the event $H_n(z,t)$ from~\eqref{eqn-trichotomy-int} occurs. 
Using~\eqref{eqn-bad-pt-dichotomy} and~\eqref{eqn-dist-to-int-half}, we get that a.s.\ for each large enough $n\in\BB N$ and each $z\in (e^{-n-100}\BB Z^2)\cap U$  such that $h_{e^{-n}}(z)  - h_1(z) \geq 2n - \beta \log n$, 
\allb \label{eqn-bad-pt-compute}
&D_h\left( B_z(e^{-n-50} ) , \bdy B_z( e^{-n/2} ) \right) \notag\\
&\qquad \leq C \int_{\lfloor n/2 \rfloor}^{n } \exp\left( (\xi_\crit-\delta) ( h_{e^{-t}}(z) -    2     t )     \right) \BB 1_{H_n(z,t)} \,dt \notag \\
&\qquad\qquad  + C \int_{\lfloor n/2 \rfloor}^{n } \exp\left( (\xi_\crit-\delta) ( h_{e^{-t}}(z) -    2    t )    \right) \BB 1_{H_n(z,t)^c}  \,dt \notag \\
&\qquad\leq C \int_{\lfloor n/2 \rfloor}^{n } \exp\left(   (\xi_\crit-\delta) (h_1(z)  - \alpha \log n )     \right) \BB 1_{H_n(z,t)} \,dt \notag \\
&\qquad\qquad  + C \int_{\lfloor n/2 \rfloor}^{n  } \exp\left(     (\xi_\crit  - \delta) (h_1(z)  - \beta \log n )     \right) \BB 1_{H_n(z,t)^c}  \,dt \quad \text{(by~\eqref{eqn-trichotomy-int} and~\eqref{eqn-bad-pt-dichotomy})} \notag \\
&\qquad\leq C \exp\left((\xi_\crit-\delta) \sup_{z\in U} h_1(z) \right) \left[  n^{-\alpha (\xi_\crit - \delta)} \int_{\lfloor n/2 \rfloor}^{n+2}  \BB 1_{H_n(z,t)} \,dt 
 +   n^{-\beta (\xi_\crit-\delta) + 1} \right] \notag \\
&\qquad\leq C \exp\left((\xi_\crit-\delta) \sup_{z\in U} h_1(z) \right) \left[ n^{-\alpha (\xi_\crit - \delta)} (\log n)^4 +   n^{-\beta (\xi_\crit-\delta) + 1}   \right]  \quad \text{(by~\eqref{eqn-bad-pt-dichotomy})}.
\alle

Since $\beta > (1+\theta)/\xi_\crit$, we can choose $\delta >0$ small enough so that $-\beta(\xi_\crit-\delta) + 1 < - \theta$. 
Furthermore, since $\theta <   \xi_\crit / 4$ we can choose $\alpha \in (0,1/4)$ sufficiently close to $1/4$ and $\delta > 0$ small enough so that $-\alpha(\xi_\crit-\delta) < - \theta$. 
The quantity $\sup_{z\in U} h_1(z)$ is a.s.\ finite. Hence the right side of~\eqref{eqn-bad-pt-compute} is bounded above by a positive, finite random variable $C'$ (which does not depend on $z$ or $n$) times $ n^{-\theta}$. 
The bound~\eqref{eqn-bad-pt-compute} holds a.s.\ for all large enough $n\in\BB N$ and all $z\in  (e^{-n-100}\BB Z^2)\cap U$. By possibly increasing $C'$ to deal with finitely many small values of $n$, we obtain~\eqref{eqn-bad-pt-dist}.
\end{proof}

Using Lemma~\ref{lem-shift-bound}, we can extend Lemma~\ref{lem-bad-pt-dist} to an estimate which holds for all $z\in U$, instead of just $z\in (e^{-n-100}\BB Z^2)\cap U$.

\begin{lem} \label{lem-bad-pt-dist'}
Let $U\subset \BB C$ be a bounded open set, let $\theta \in (0 ,  \xi_\crit / 4)$, and let $\beta > (1+\theta)/\xi_\crit$.  
Almost surely, there is a random $C \in (0,\infty)$ such that for each $n\in\BB N$ and each $z\in U$ such that 
\eqb \label{eqn-bad-pt-dist-interval}
\exists t\in [n+1,n+2] \: \text{with} \: h_{e^{-t}}(z)  - h_1(z)  \geq 2 t - \beta \log t ,
\eqe 
it holds that
\eqb \label{eqn-bad-pt-dist'}
D_h\left( \bdy B_z(e^{- n - 40} ) , \bdy B_z( e^{-n/2 -1 } ) \right) 
\leq C n^{-\theta}   .
\eqe 
\end{lem}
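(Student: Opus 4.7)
The plan is to approximate any $z \in U$ satisfying~\eqref{eqn-bad-pt-dist-interval} by a nearby lattice point $z_n^* \in (e^{-n-100}\BB Z^2)$ with $|z - z_n^*| \leq e^{-n-99}$, apply Lemma~\ref{lem-bad-pt-dist} to $z_n^*$, and then transfer the resulting bound back to $z$-centered circles via a geometric crossing argument. A key observation is that the proof of Lemma~\ref{lem-bad-pt-dist} actually establishes (through Lemma~\ref{lem-dist-to-int'}) the slightly stronger bound
\eqbn
D_h(\bdy B_{z^*}(e^{-n-50}), \bdy B_{z^*}(e^{-n/2})) \leq C n^{-\theta} ,
\eqen
which I will use in place of the statement of Lemma~\ref{lem-bad-pt-dist} itself.

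First I would fix $\beta' \in (\beta, \beta+1)$, noting that $\beta' > (1+\theta)/\xi_\crit$ holds automatically since $\beta > (1+\theta)/\xi_\crit$, and apply Lemma~\ref{lem-bad-pt-dist} with $\beta'$ in place of $\beta$ on a slightly enlarged bounded open set $U' \supset U$ (chosen so that all relevant lattice points $z_n^*$ lie in $U'$). This yields a random $C_0 \in (0,\infty)$ such that the above stronger bound holds uniformly over every lattice point $z^* \in (e^{-n-100}\BB Z^2)\cap U'$ satisfying $h_{e^{-n}}(z^*) - h_1(z^*) \geq 2n - \beta' \log n$.

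Next I would verify that for all $n$ sufficiently large (uniformly in $z$), the hypothesis~\eqref{eqn-bad-pt-dist-interval} at $z$ implies the lattice hypothesis at $z_n^*$. By Lemma~\ref{lem-all-scales} (applied on $U'$), the difference $|h_{e^{-t}}(z) - h_{e^{-n}}(z_n^*)|$ for $t \in [n+1,n+2]$ is bounded by $|2n - h_{e^{-n}}(z_n^*)|^{1/2+\zeta} + O(1)$. The hypothesis~\eqref{eqn-bad-pt-dist-interval} together with~\eqref{eqn-use-gff-max'} gives a two-sided bootstrap bound $|2n - h_{e^{-n}}(z_n^*)| = O(\log n)$, so this error is $o(\log n)$. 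Since $h_1$ is a.s.\ continuous on $\ol U$, the difference $|h_1(z) - h_1(z_n^*)|$ is uniformly bounded by a random constant. Chaining these estimates with~\eqref{eqn-bad-pt-dist-interval} yields $h_{e^{-n}}(z_n^*) - h_1(z_n^*) \geq 2n - \beta' \log n$ for all $n$ large, as required.

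Finally, I would convert the lattice-centered bound into~\eqref{eqn-bad-pt-dist'} via a crossing argument. The Euclidean inequalities $e^{-n-50} + e^{-n-99} < e^{-n-40}$ and $e^{-n/2-1} + e^{-n-99} < e^{-n/2}$ imply $\bdy B_{z_n^*}(e^{-n-50}) \subset B_z(e^{-n-40})$ and $\bdy B_{z_n^*}(e^{-n/2}) \cap B_z(e^{-n/2-1}) = \emptyset$. A near-minimizing path $P$ from $\bdy B_{z_n^*}(e^{-n-50})$ to $\bdy B_{z_n^*}(e^{-n/2})$, which exists by Axiom~\ref{item-metric-length}, therefore starts inside $B_z(e^{-n-40})$ and ends outside $B_z(e^{-n/2-1})$. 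Extracting the sub-path between the last exit of $B_z(e^{-n-40})$ and the first subsequent crossing of $\bdy B_z(e^{-n/2-1})$ yields a path connecting $\bdy B_z(e^{-n-40})$ to $\bdy B_z(e^{-n/2-1})$ whose $D_h$-length is at most that of $P$. Taking the infimum over near-minimizers gives~\eqref{eqn-bad-pt-dist'} for $n$ large, and finitely many small $n$ are absorbed by increasing $C$. The main obstacle is the hypothesis transfer in the second step: the error from switching between $z$ at scale $t \in [n+1,n+2]$ and the lattice point $z_n^*$ at scale $n$ must be $o(\log n)$, so that $\beta'$ can remain above $(1+\theta)/\xi_\crit$. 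This requires both the sharp circle-average continuity from Lemma~\ref{lem-all-scales} and the bootstrap that the hypothesis itself forces $|2n - h_{e^{-n}}(z_n^*)|$ to be logarithmic.
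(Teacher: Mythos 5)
Your proof is correct and takes essentially the same approach as the paper's: approximate $z$ by a lattice point $z_n^*$ within $e^{-n-99}$, apply Lemma~\ref{lem-bad-pt-dist} (with a slightly enlarged $\beta'$ and noting that its proof actually produces the sharper bound at scale $e^{-n-50}$), transfer the circle-average hypothesis via the continuity estimate, and then conclude via a geometric nesting of the annuli. The only cosmetic difference is that you invoke Lemma~\ref{lem-all-scales} directly together with an explicit bootstrap on $|2n - h_{e^{-n}}(z_n^*)|$, whereas the paper packages this transfer step as Lemma~\ref{lem-shift-bound} and bounds the error term $|2t - h_{e^{-t}}(z)|$ using the global upper bound $h_{e^{-t}}(z) \leq 2t$; these two organizations of the same estimate are interchangeable.
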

\begin{proof}
Let $\beta' > \beta$.
By Lemma~\ref{lem-bad-pt-dist}, applied with $\beta'$ instead of $\beta$, a.s.\ there exists $C \in (0,\infty)$ such that for each $n\in\BB N$ and each $z\in (e^{-n-100}\BB Z^2)\cap U$ such that 
\eqbn
h_{e^{-n}}(z)  - h_1(z) \geq 2n - \beta' \log n ,
\eqen
we have
\eqb \label{eqn-use-bad-pt-dist}
D_h\left( \bdy B_z( e^{- n-50} ) , \bdy B_z(e^{-n/2} ) \right) 
\leq C  n^{-\theta}   .
\eqe 
Furthermore, by Lemma~\ref{lem-dist-to-int}, a.s.\ there exists $t_*  >0$ such that
\eqb \label{eqn-bad-pt-max}
 h_{e^{-t}}(z)  \leq 2t ,\quad \forall z \in U ,\quad \forall t \geq t_* . 
\eqe 

Now let $z\in U$ and assume that~\eqref{eqn-bad-pt-dist-interval} holds.
Let $t\in [n+1,n+2]$ be such that $h_{e^{-t}}(z)  - h_1(z)   \geq 2 t - \beta \log t$. 

Let $z_n \in (e^{-n-100}\BB Z^2)\cap U$ be as in Lemma~\ref{lem-shift-bound}, so that $|z-z_n| \leq e^{-n-99}$. 
By Lemma~\ref{lem-shift-bound}, applied with $\zeta = 1/4$, say, 
\allb \label{eqn-last-hit-shift0}
h_{e^{-n}}(z_n) - h_1(z_n) 
 \geq h_{e^{-t}}(z)  -   2  \left| 2 t - h_{e^{-t}}(z) \right|^{3/4} - |h_1(z_n) |  -  C_1  .
\alle
By~\eqref{eqn-bad-pt-max} and our choice of $t$, we have $h_{e^{-t}}(z) \geq 2t - \beta\log t - |h_1(z)|$ and (if $t\geq t_*$) then
\eqbn
|2t - h_{e^{-t}}(z)| = 2t - h_{e^{-t}}(z) \leq \beta\log t + |h_1(z)| .
\eqen
Applying these last two estimates to the right side of~\eqref{eqn-last-hit-shift0}, then noting that $t\in[n+1,n+2]$, gives
\allb \label{eqn-last-hit-shift}
h_{e^{-n}}(z_n) - h_1(z_n) 
&\geq 2 t - \beta \log t  -  2  \left| \beta \log t\right|^{3/4} - |h_1(z)|   - |h_1(z)|^{3/4} - |h_1(z_n) | -  C_1  \notag \\
&\geq 2 n - \beta  \log n - 2  \left| \beta \log n \right|^{3/4}  - |h_1(z)| - |h_1(z)|^{3/4} - |h_1(z_n) | - C_1' ,
\alle
where $C_1'$ is equal to $C_1$ plus a deterministic positive constant depending only on $\beta$. 

Since $\sup_{w\in U} |h_1(w)|$ is a.s.\ finite and $\beta' > \beta$, a.s.\ there is a random $n_* \geq t_*$, which does not depend on $z$, such that if $n\geq n_*$ then the right side of~\eqref{eqn-last-hit-shift} is at least $2 n  - \beta' \log n$. By~\eqref{eqn-use-bad-pt-dist}, it is a.s.\ the case that if $n\geq n_*$, then
\eqb \label{eqn-bad-pt-dist0'}
D_h\left( \bdy B_{z_n}(  e^{-n-50} ) , \bdy B_{z_n}( e^{-n/2} ) \right) 
\leq C n^{-\theta }   .
\eqe
Since $|z-z_n| \leq e^{-n-99}$, we have
\eqbn
\BB A_z( e^{-n-40} , e^{-n/2-1} ) \subset \BB A_{z_n}( e^{-n-50},e^{-n/2} ) .
\eqen
Therefore~\eqref{eqn-bad-pt-dist0'} implies~\eqref{eqn-bad-pt-dist'} for $n\geq n_*$. Since the $D_h$-distance between any two circles is a.s.\ finite, we see that~\eqref{eqn-bad-pt-dist'} for $n\geq n_*$ implies~\eqref{eqn-bad-pt-dist'} for all $n\in\BB N$ with a possibly larger value of $C$. 
\end{proof}

\subsection{Proof of Theorem~\ref{thm-holder}}
\label{sec-holder}

We now combine Lemmas~\ref{lem-good-pt-dist} and~\ref{lem-bad-pt-dist'} to get an estimate for $D_h\left( \bdy B_z(e^{- n-50 } ) , \bdy B_z( e^{-n/2-1} ) \right) $ which holds uniformly for all $n\in\BB N$ and all $z\in U$.

\begin{lem} \label{lem-all-pt-dist} 
Let $U\subset \BB C$ be a bounded open set and let $\theta \in (0 ,  \xi_\crit / 4)$.
Almost surely, there is a random $C \in (0,\infty)$ such that for each $n\in\BB N$ and each $z\in U$, 
\eqb \label{eqn-all-pt-dist}
D_h\left( \bdy B_z(e^{- n-50 } ) , \bdy B_z( e^{-n/2-1} ) \right) 
\leq C n^{-\theta}   
\eqe 
\end{lem}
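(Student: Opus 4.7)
The plan is to combine Lemma~\ref{lem-good-pt-dist}, Lemma~\ref{lem-bad-pt-dist'}, and Lemma~\ref{lem-max-around} via a dichotomy on whether there exists a ``bad scale'' for $z$ in $[\lfloor n/2\rfloor,n]$. Fix $\beta>(1+\theta)/\xi_\crit$ so that both Lemma~\ref{lem-good-pt-dist} and Lemma~\ref{lem-bad-pt-dist'} apply with this $\beta$, and call an integer $k$ \emph{bad} for $z$ if there exists $t\in[k+1,k+2]$ with $h_{e^{-t}}(z)-h_1(z)\geq 2t-\beta\log t$. Let $C_0$ be an a.s.\ finite random constant dominating the three constants arising from those lemmas.

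If no integer $k\in[\lfloor n/2\rfloor-1,n-1]$ is bad for $z$, then $h_{e^{-t}}(z)-h_1(z)<2t-\beta\log t$ for every integer $t\in[\lfloor n/2\rfloor,n]$, so Lemma~\ref{lem-good-pt-dist} with $m=\lfloor n/2\rfloor$ gives $D_h(\bdy B_z(e^{-n-50}),\bdy B_z(e^{-\lfloor n/2\rfloor}))\leq C_0\lfloor n/2\rfloor^{-\theta}$. Since $\bdy B_z(e^{-\lfloor n/2\rfloor})$ lies outside $\bdy B_z(e^{-n/2-1})$, any near-minimizing path must cross the latter, which transfers the bound to $D_h(\bdy B_z(e^{-n-50}),\bdy B_z(e^{-n/2-1}))$.

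Otherwise, let $\tau$ be the largest bad integer in $[\lfloor n/2\rfloor-1,n-1]$. Lemma~\ref{lem-bad-pt-dist'} applied at $\tau$ yields a path $P_{\mathrm{bad}}$ from $\bdy B_z(e^{-\tau-40})$ to $\bdy B_z(e^{-\tau/2-1})$ of $D_h$-length $\leq C_0\tau^{-\theta}$. By maximality of $\tau$, the good condition holds at every integer $t\in[\tau+2,n+1]$, so provided $\tau+10<n$, Lemma~\ref{lem-good-pt-dist} with $m=\tau+10$ yields a path $P_{\mathrm{good}}$ from $\bdy B_z(e^{-n-50})$ to $\bdy B_z(e^{-\tau-10})$ of $D_h$-length $\leq C_0(\tau+10)^{-\theta}$. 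Lemma~\ref{lem-max-around} applied in $\BB A_z(e^{-\tau-11},e^{-\tau-10})$ produces a loop $L$ around $z$ of $D_h$-length $\leq C_0(\tau+10)^{-\theta}$ disconnecting the inner and outer boundaries of this annulus. Both $P_{\mathrm{good}}$ and $P_{\mathrm{bad}}$ traverse this annulus from interior to exterior, so each must cross $L$; splicing $P_{\mathrm{good}}$, an arc of $L$, and $P_{\mathrm{bad}}$ at their intersections gives a path from $\bdy B_z(e^{-n-50})$ to $\bdy B_z(e^{-\tau/2-1})$ of length at most $3C_0(n/2)^{-\theta}$. Since $\tau\leq n$ gives $\bdy B_z(e^{-\tau/2-1})$ outside $\bdy B_z(e^{-n/2-1})$, the claimed bound follows.

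The main obstacle is the edge case $\tau\in[n-10,n-1]$, in which Lemma~\ref{lem-good-pt-dist} cannot be invoked with $m=\tau+10$ because $m\geq n$. In this case we must bridge the short log-scale gap between $\bdy B_z(e^{-n-50})$ and $\bdy B_z(e^{-\tau-40})$ using the single-scale across estimate implicit in Lemma~\ref{lem-all-scales}: applied at scale $k=\tau+40$, so that $\bdy B_z(e^{-n-50})$ lies on the across path from $\bdy B_z(e^{-k-100})$ to $\bdy B_z(e^{-k})$, this gives an additional contribution of order $n^{-\theta}$ once combined with the bound $h_{e^{-t}}(z)-2t\leq -\alpha\log t+O(1)$ for $\alpha\in(0,1/4)$ close to $1/4$ coming from Lemma~\ref{lem-dist-to-int}. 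A minor compatibility issue between the integer-scale hypothesis of Lemma~\ref{lem-good-pt-dist} and the interval-scale hypothesis of Lemma~\ref{lem-bad-pt-dist'} can be resolved by using slightly different $\beta$'s in the two lemmas, and all finitely many small $n$ and edge cases are absorbed into the final random constant $C$.
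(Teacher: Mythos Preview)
Your argument is essentially the same as the paper's: fix $\beta>(1+\theta)/\xi_\crit$, treat separately the case where the good condition holds on all of $[\lfloor n/2\rfloor,n]_{\BB Z}$ (via Lemma~\ref{lem-good-pt-dist}) and the case where it fails at a largest scale $\tau$ (via Lemma~\ref{lem-bad-pt-dist'} below $\tau$, Lemma~\ref{lem-good-pt-dist} above $\tau$, and a linking loop from Lemma~\ref{lem-max-around}). The paper takes $m=N+2\approx\tau+1$ rather than your $m=\tau+10$ and places the linking annulus at $\BB A_z(e^{-N-21},e^{-N-20})$, but these are cosmetic differences.

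One small point in your edge-case treatment: Lemma~\ref{lem-all-scales} (and the across path in condition~\ref{item-dist-sum-across}) is stated only for lattice points $z\in(e^{-n-100}\BB Z^2)\cap U$, so invoking it for a general $z\in U$ is not quite licit without the shift argument of Lemma~\ref{lem-shift-bound}. A cleaner way to bridge the $O(1)$ log-scale gap between $\bdy B_z(e^{-n-50})$ and $\bdy B_z(e^{-\tau-40})$ is to apply Lemma~\ref{lem-dist-to-int'} directly with the two indices differing by a bounded amount (this already holds for all $z\in U$); combined with the bound \eqref{eqn-dist-to-int-circle}, each of the $O(1)$ summands is at most a constant times $n^{-(\xi_\crit-\delta)\alpha}\leq n^{-\theta}$, and you can then link to $P_{\mathrm{bad}}$ with one more loop from Lemma~\ref{lem-max-around}.
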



\begin{proof}
Let $\beta > (1+\theta)/\xi_\crit$ and consider $n\in\BB N$ and a point $z\in U$. 
We already know from Lemma~\ref{lem-good-pt-dist} (applied with $m = \lfloor n/2\rfloor$) that a.s.\ there exists a random $C \in (0,\infty)$ such that \eqref{eqn-all-pt-dist} holds for each $z \in U$ which satisfies
\eqbn
h_{e^{-t}}(z)  - h_1(z)  \leq 2 t - \beta \log t   , \quad \forall t \in [ \lfloor n/2 \rfloor   , n ]_{\BB Z }   .
\eqen
So, we can assume without loss of generality that there exists $t \in [\lfloor n/2 \rfloor , n]_{\BB Z}$ such that
\eqb \label{eqn-bad-time-exists}
h_{e^{-t}}(z) - h_1(z) \geq 2t - \beta   \log t .
\eqe 

Let $\tau$ be the largest time $t\in [\lfloor n/2 \rfloor , n]_{\BB Z}$ for which~\eqref{eqn-bad-time-exists} holds.  
Let $N \in  [ \lfloor n/2 \rfloor  - 1   ,  n -2 ]_{\BB Z}$ be chosen so that $\tau \in [N +1 , N+2]$.  
We will use Lemma~\ref{lem-bad-pt-dist'} to estimate $D_h\left( \bdy B_z( e^{- N - 40} ) , \bdy B_z( e^{-n/2 -1 } ) \right)$ and Lemma~\ref{lem-good-pt-dist} to estimate $D_h\left( \bdy B_z( e^{- n - 50} ) , \bdy B_z( e^{-N-2} ) \right) $. Then, we will use Lemma~\ref{lem-max-around} to ``link up" a path from $ \bdy B_z( e^{- N - 40} )$ to $ \bdy B_z( e^{-n/2 -1 })$ and a path from $\bdy B_z( e^{- n - 50} )$ to $\bdy B_z( e^{-N-2 } )$ in order to get a path from $\bdy B_z(e^{-n-50} )$ to $\bdy B_z(e^{-n/2-1} )$. 

By the definitions of $\tau$ and $N$, the condition~\eqref{eqn-bad-pt-dist-interval} holds with $N$ in place of $n$. 
Lemma~\ref{lem-bad-pt-dist'} therefore implies that
\eqb \label{eqn-use-bad-pt-dist0'}
D_h\left( \bdy B_z( e^{- N - 40} ) , \bdy B_z( e^{-N/2 -1 } ) \right) 
\leq C_1 N^{-\theta }   ,
\eqe 
where $C_1 \in (0,\infty)$ is the random variable $C$ from Lemma~\ref{lem-bad-pt-dist'}.  
Since $\lfloor n/2 \rfloor - 1 \leq N\leq n$, the relation~\eqref{eqn-use-bad-pt-dist0'} implies that also
\eqb \label{eqn-use-bad-pt-dist'}
D_h\left( \bdy B_z( e^{- N - 40} ) , \bdy B_z( e^{-n/2 -1 } ) \right) 
\leq C_1 N^{-\theta }   .
\eqe  

Since $\tau$ is the largest time in $[\lfloor n/2 \rfloor , n]_{\BB Z}$ for which~\eqref{eqn-bad-time-exists} holds, we have $h_{e^{-t}}(z)  - h_1(z) \leq 2t-\beta\log t$ for each $t\in [N+2,n]$. That is, the condition~\eqref{eqn-good-pt-dist-interval} from Lemma~\ref{lem-good-pt-dist} holds with $m = N+2$ and our given choice of $n$. Therefore, Lemma~\ref{lem-good-pt-dist} implies that
\eqb \label{eqn-use-good-pt-dist}
D_h\left( \bdy B_z( e^{- n - 50} ) , \bdy B_z( e^{-N-2 } ) \right) 
\leq C_2 N^{-\theta} ,  
\eqe 
where $C_2 \in (0,\infty)$ is a deterministic constant times the random variable $C$ from Lemma~\ref{lem-good-pt-dist}. 

By Lemma~\ref{lem-max-around}, a.s.\ there exists a random $C_3 \in (0,\infty)$ (which does not depend on $z$ or $n$) such that
\eqb \label{eqn-use-max-around}
D_h\left(\text{around $\BB A_z( e^{-N-21} , e^{-N-20})$} \right) \leq C_3  N^{- \theta } .
\eqe
The union of any path from $\bdy B_z( e^{- N - 40} ) $ to $\bdy B_z( e^{-n/2 -1 } )$, any path from $ \bdy B_z( e^{- n - 50} ) $ to $\bdy B_z( e^{-N-2 } )$, and any path in $\BB A_z( e^{-N-21} ,e^{-N-20} )$ which disconnects the inner and outer boundaries of this annulus is connected and contains a path from $\bdy B_z(e^{-n-50} )$ to $\bdy B_z(e^{-n/2-1} )$. 
Hence combining~\eqref{eqn-use-bad-pt-dist'}, \eqref{eqn-bad-pt-dist'}, and~\eqref{eqn-use-max-around} and recalling that $N \in [\lfloor n/2 \rfloor ,n]_{\BB Z}$ gives~\eqref{eqn-all-pt-dist}. 
\end{proof}

By summing the estimate of Lemma~\ref{lem-all-pt-dist} over dyadic values of $n$, we obtain the following. 

\begin{lem} \label{lem-all-scale-dist}
Let $U\subset\BB C$ be a bounded open set and let $\theta  \in (0,  \xi_\crit / 4)$.
Almost surely, there exists a random $C  \in (0,\infty)$ such that for each $z\in U$ and each $k ,\ell\in\BB N$ with $\ell  < k$, 
\eqb \label{eqn-all-scale-dist}
D_h\left(\bdy B_z( e^{-2^k} )   , \bdy B_z( e^{-2^\ell}) \right) \leq C 2^{-\theta \ell}  .
\eqe
\end{lem}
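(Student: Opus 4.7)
The plan is to build, for each $z \in U$ and each pair $\ell < k$, a connected subset $\Gamma \subset \BB C$ of total $D_h$-length at most $C\, 2^{-\theta\ell}$ that meets both circles $\bdy B_z(e^{-2^k})$ and $\bdy B_z(e^{-2^\ell})$. The bound~\eqref{eqn-all-scale-dist} will then follow immediately, since the $D_h$-diameter of such a connected union of curves is dominated by the sum of the $D_h$-lengths of its constituent pieces.

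First I will apply Lemma~\ref{lem-all-pt-dist} with the choice $n = 2^{j+1}$ for each $j \in \{\ell-1,\ell,\ldots,k-1\}$. Using the length-space Axiom~\ref{item-metric-length} to pass from distance bounds to actual near-minimizing curves, this produces paths $P_j$ from $\bdy B_z(e^{-2^{j+1}-50})$ to $\bdy B_z(e^{-2^j-1})$ of $D_h$-length at most $C_1\, 2^{-\theta(j+1)}$, where $C_1$ is an a.s.\ finite random constant independent of $z, k, \ell, j$. Separately I will apply Lemma~\ref{lem-max-around} with $n = 2^j+1$ for each $j \in \{\ell,\ldots,k-1\}$ to obtain loops $L_j$ contained in the thin annulus $\BB A_z(e^{-2^j-2}, e^{-2^j-1})$, disconnecting its inner and outer boundaries, and of $D_h$-length at most $C_2\, 2^{-\theta j}$. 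I then set $\Gamma := \bigcup_{j=\ell-1}^{k-1} P_j \cup \bigcup_{j=\ell}^{k-1} L_j$.

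The key structural check is that $\Gamma$ is connected and meets both target circles. Comparing radii shows that, for each $j \in \{\ell,\ldots,k-1\}$, both $P_{j-1}$ and $P_j$ have one endpoint at Euclidean distance from $z$ strictly smaller than $e^{-2^j-2}$ and the other endpoint at distance at least $e^{-2^j-1}$; since $L_j$ topologically separates the two boundary circles of $\BB A_z(e^{-2^j-2}, e^{-2^j-1})$, both $P_{j-1}$ and $P_j$ must cross $L_j$, which gives the connectedness of $\Gamma$. The path $P_{k-1}$ traverses radii from $e^{-2^k-50}$ to $e^{-2^{k-1}-1}$ and therefore crosses $\bdy B_z(e^{-2^k})$; similarly $P_{\ell-1}$ traverses radii from $e^{-2^\ell-50}$ to $e^{-2^{\ell-1}-1}$ and crosses $\bdy B_z(e^{-2^\ell})$ whenever $\ell \geq 2$, with the degenerate case $\ell=1$ handled by the fact that the outer endpoint of $P_0$ lies precisely on $\bdy B_z(e^{-2}) = \bdy B_z(e^{-2^\ell})$.

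Summing the $D_h$-lengths of the constituent curves yields the geometric series
\[
\sum_{j=\ell-1}^{k-1} C_1 \, 2^{-\theta(j+1)} + \sum_{j=\ell}^{k-1} C_2 \, 2^{-\theta j} \leq C\, 2^{-\theta\ell},
\]
which bounds the $D_h$-diameter of $\Gamma$ and hence $D_h(\bdy B_z(e^{-2^k}),\bdy B_z(e^{-2^\ell}))$. The main step that requires genuine care is the topological separation argument ensuring that each loop $L_j$ actually disconnects the endpoints of $P_{j-1}$ and $P_j$; this reduces to the chain of inequalities $2^{j+1}+50 > 2^j+50 > 2^j+2 > 2^j+1 > 2^{j-1}+1$, combined with the fact that $L_j$ is a closed curve in the thin annulus $\BB A_z(e^{-2^j-2}, e^{-2^j-1})$ disconnecting its two boundary circles. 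No further analytic input is needed beyond Lemmas~\ref{lem-all-pt-dist} and~\ref{lem-max-around}.
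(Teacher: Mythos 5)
Your proposal is correct and follows essentially the same route as the paper: apply Lemma~\ref{lem-all-pt-dist} at dyadic scales to produce radial paths, apply Lemma~\ref{lem-max-around} to produce linking loops in thin annuli, verify by the same radius comparisons that consecutive radial paths both cross the intermediate loop, and sum the resulting geometric series. The only differences are a harmless index shift ($n = 2^{j+1}$ versus $n = 2^j$) and your more careful phrasing about near-minimizing curves via Axiom~\ref{item-metric-length}, where the paper informally speaks of paths "which attain the minimal $D_h$-distances."
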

\begin{proof} 
By Lemma~\ref{lem-all-pt-dist}, a.s.\ there exists a random $C_0 \in (0,\infty)$ such that  
\eqb \label{eqn-use-all-pt-dist}
D_h\left( \bdy B_z(e^{- 2^j -50 } ) , \bdy B_z(e^{-2^{j-1}-1} ) \right) \leq C_0 2^{-\theta  j} ,\quad\forall z \in U ,\quad \forall j \in \BB N  .
\eqe
By Lemma~\ref{lem-max-around}, a.s.\ there exists a random $C_1 = C_1(\delta,\alpha) \in (0,\infty)$ such that 
\eqb \label{eqn-use-max-around'}
D_h\left( \text{around $\BB A_z( e^{-2^j-2} ,e^{-2^j-1} )$} \right) \leq C_1 2^{-\theta  j} ,\quad\forall z \in U ,\quad \forall j \in \BB N  . 
\eqe

For each $j\in\BB N$, the union of any path from $\bdy B_z( e^{- 2^{j+1} -50 } )$ to $\bdy B_z( e^{-2^{j}-1}) $, any path from $\bdy B_z(e^{- 2^j -50 } )$ to $\bdy B_z( e^{-2^{j-1}-1}  ) $, and any path in $\BB A_z( e^{-2^j-2} , e^{-2^j-1})$ which disconnects the inner and outer boundaries of this annulus is connected. Hence, if we consider paths which attain the minimal $D_h$-distances in~\eqref{eqn-use-all-pt-dist} and~\eqref{eqn-use-max-around'} for $j=\ell,\dots,k$ then the union of these paths contains a path from $ \bdy B_z(e^{-2^k} )$ to $\bdy B_z( e^{-2^\ell} )$ whose $D_h$-length is at most
\eqbn
(C_0 + C_1) \sum_{j=\ell}^k  2^{-\theta  j}  .
\eqen
This last quantity is at most $C 2^{-\theta \ell}$ for an appropriate choice of $C>0$. 
\end{proof}

\begin{proof}[Proof of Theorem~\ref{thm-holder}]
It suffices to prove~\eqref{eqn-holder} for pairs of points $z,w\in U$ such that $|z-w| \leq e^{-100}$: the general case follows by applying the triangle inequality to points $z = z_0 , z_1,\dots,z_N = w$ such that $|z_j - z_{j-1}| \leq e^{-100}$ for each $j=1,\dots,N$ and possibly increasing $C$. 

For $z,w\in U$ with $0 < |z-w| \leq e^{-100}$, let $\ell = \ell(z,w) \in \BB N$ be chosen so that $|z-w| \in [e^{-2^{\ell+2}} ,e^{-2^{\ell+1} }]$, equivalently
\eqb \label{eqn-log-dist}
\log \frac{1}{|z-w|} \in \left[ 2^{\ell+1} , 2^{\ell+2} \right] .
\eqe
By Lemma~\ref{lem-all-scale-dist}, a.s.\ there exists a random $C_0 \in (0,\infty)$ (which does not depend on $z$ or $w$) such that
\eqbn
D_h\left(\bdy B_z( e^{-2^k} ) , \bdy B_z( e^{-2^\ell} ) \right) \leq C_0 2^{-\theta \ell}  , \quad \forall k \in \BB N ,
\eqen
and the same is true with $w$ in place of $z$. 
Sending $k\rta\infty$ and using the lower semicontinuity of $D_h$ shows that a.s.\ 
\eqb \label{eqn-use-all-scale-dist}
D_h\left(z , \bdy B_z( e^{-2^\ell} ) \right) \leq C_0 2^{-\theta \ell}  ,
\eqe
and the same is true with $w$ in place of $z$. 

By Lemma~\ref{lem-max-around} a.s.\ there exists $C_1 \in (0,\infty)$ (which does not depend on $z$ or $w$) such that
\eqb \label{eqn-holder-around}
D_h\left( \text{around $\BB A_z( e^{-2^\ell-2} ,e^{-2^\ell-1} )$} \right) \leq C_1 2^{-\theta  \ell}. 
\eqe
We have $|z-w| \leq e^{-2^{\ell+1}} \leq e^{-2^\ell-2}$, so $w \in B_z( e^{-2^\ell-2} )$ and $  B_z( e^{-2^\ell-1} ) \subset B_w( e^{-2^\ell} )$. Consequently, the union of any path from $w$ to $\bdy B_w( e^{-2^\ell} )$, any path from $z$ to $\bdy B_z(e^{-2^\ell} )$, and any path in $\BB A_z( e^{-2^\ell-2} ,e^{-2^\ell-1} )$ which disconnects the inner and outer boundaries of this annulus is connected. It therefore follows from~\eqref{eqn-use-all-scale-dist} and~\eqref{eqn-holder-around}, followed by~\eqref{eqn-log-dist}, that 
\eqb
D_h(z,w) \leq (2C_0 + C_1) 2^{-\theta\ell} \leq C \left(\log\frac{1}{|z-w|} \right)^{-\theta}
\eqe
for an appropriate choice of (random) $C \in (0,\infty)$.
\end{proof}

\subsection{Proof of Proposition~\ref{prop-optimality}}
\label{sec-optimality}

Let us now prove our lower bound for the modulus of continuity of $D_h$.

\begin{proof}[Proof of Proposition~\ref{prop-optimality}]
Recall the definition of the annulus $A_z^\parallel(e^{-t}) = \BB A_z(e^{-t-100} , e^{-t})$ from~\eqref{eqn-annuli-def}. 
By Lemma~\ref{lem-all-scales} (applied with $\zeta =1/4$, say) and Definition~\ref{def-triple-max}, a.s.\ there exists a random $C \in (0,\infty)$ such that for each $n\in\BB N$ and each $z\in (e^{-n-100}\BB Z^2)\cap U$,
\allb \label{eqn-dist-lower}
D_h\left( z,\bdy B_z(e^{-n} ) \right)
&\geq D_h\left(\text{across $A_z^\parallel(e^{-n})$}\right) \notag\\
&\geq C^{-1} \left[ M_z(e^{-n}) \right]^{-1} \exp\left( \xi_\crit h_{e^{-n}}(z) - 2\xi_\crit n \right) \notag\\
&\geq C^{-1}  \exp\left( \xi_\crit h_{e^{-n}}(z) - 2\xi_\crit n - |2n - h_{e^{-n}}(z)|^{3/4}  \right) .
\alle

By Proposition~\ref{prop-gff-max}, a.s.\ for each large enough $n\in\BB N$, 
\eqb \label{eqn-optimality-max}
h_{e^{-n}}(z) \leq 2n  ,\quad\forall z\in (e^{-n-100}\BB Z^2) \cap U .
\eqe
By Proposition~\ref{prop-gff-tight}, for each $\alpha' > 3/4$ a.s.\ there are infinitely many $n\in\BB N$ for which there exists $z_n \in (e^{-n}\BB Z^2)\cap U$ which satisfies
\eqb \label{eqn-optimality-pt}
h_{e^{-n}}(z_n)  \geq 2n - \alpha' \log n .
\eqe
By applying~\eqref{eqn-optimality-max} and~\eqref{eqn-optimality-pt} to estimate the right side of~\eqref{eqn-dist-lower} for $z=z_n$, we get
\allb \label{eqn-optimality1}
D_h\left( z_n ,\bdy B_{z_n}( e^{-n} ) \right)
 \geq  C^{-1}  \exp\left( -  \xi_\crit \alpha' \log n  - |\alpha' \log n|^{3/4}  \right) .
\alle
If we are given $\theta' > 3\xi_\crit/4$ and we choose $\alpha' \in (3/4, \theta'/\xi_\crit)$, then for each large enough $n\in\BB N$, the right side of~\eqref{eqn-optimality1} is bounded below by $n^{-\theta'}$. This gives~\eqref{eqn-optimality} with $z=z_n$ and $w\in \bdy B_{z_n}(e^{-n})$.  
\end{proof}

\bibliography{cibib}
\bibliographystyle{hmralphaabbrv}

\end{document}